\documentclass[10pt]{amsart}

\usepackage[T1]{fontenc} 
\usepackage[utf8]{inputenc} 
\usepackage{longtable}
\usepackage{amssymb}
\usepackage{amsthm}
\usepackage{amsmath}
\usepackage{wasysym}
\usepackage{dcpic, pictex}
\usepackage{bbm}
\usepackage{enumerate}
\usepackage{amsfonts}
\usepackage{amsthm}
\usepackage{amsmath}
\usepackage{times}
\usepackage{amscd}
\usepackage[mathscr]{eucal}
\usepackage{indentfirst}
\usepackage{pict2e}
\usepackage{epic}
\usepackage{epstopdf} 
\usepackage{verbatim}
\usepackage{cancel}
\usepackage[foot]{amsaddr}
\usepackage{lipsum}
\usepackage{mathrsfs}
\usepackage{bm}

\usepackage{
amsfonts, 
latexsym, 
enumerate,
}
\usepackage[english]{babel}
\allowdisplaybreaks
\makeindex
\newtheorem{theorem}{Theorem}[section]
\newtheorem{lemma}[theorem]{Lemma}
\newtheorem{proposition}[theorem]{Proposition}
\newtheorem{definition}[theorem]{Definition}

\newtheorem{remark}[theorem]{Remark}

\newtheorem{assumption}[theorem]{Assumption}
\usepackage[a4paper,top=3.5cm,bottom=3.5cm,left=3cm,right=3cm]{geometry}
\numberwithin{equation}{section}

\allowdisplaybreaks

\usepackage{ bbold }
\usepackage{xcolor}
\usepackage{listings}

\newcommand{\nc}{\normalcolor}
\newcommand{\dif}{\mathrm{d}}
\newcommand{\E}{\mathbf{E}}
\newcommand{\R}{\mathbf{R}}
\newcommand{\C}{\mathbf{C}}

\newcommand{\N}{\mathbf{N}}

\newcommand{\ma}{\mathfrak{a}}

\newcommand{\Var}{\mathrm{Var}}
\newcommand{\Tr}{\mathrm{Tr}}

\newcommand{\ii}{\mathrm{i}}
\newcommand{\ee}{\mathrm{e}}
\newcommand{\rd}{\mathrm{d}}

\usepackage{hyperref}

\title[Beyond normal fluctuations]{Beyond normal fluctuations in local laws for Wigner matrices}

\date{\today}

\begin{document}

\maketitle

\vspace{0.25cm}

\renewcommand{\thefootnote}{\fnsymbol{footnote}}

\noindent
\mbox{}%
\hfill%
\begin{minipage}{0.21\textwidth}
	\centering
	{L\'aszl\'o Erd\H{o}s}\footnotemark[1]\\
	\footnotesize{\textit{lerdos@ist.ac.at}}
\end{minipage}
\hfill%
\begin{minipage}{0.21\textwidth}
	\centering
	{Oleksii Kolupaiev}\footnotemark[1]\\
	\footnotesize{\textit{Oleksii.Kolupaiev@ist.ac.at}}
\end{minipage}
\hfill%
\mbox{}%
\footnotetext[1]{Institute of Science and Technology Austria, Am Campus 1, 3400 Klosterneuburg, Austria. 
}
\footnotetext[1]{Supported by the ERC Advanced Grant ``RMTBeyond'' No.~101020331.}

\renewcommand*{\thefootnote}{\arabic{footnote}}
\vspace{0.25cm}

\begin{abstract}  We identify non-Gaussian corrections to the central limit theorem for the global and  local laws, i.e. for Stieltjes transform of the empirical eigenvalue distribution of a large real symmetric or complex Hermitian Wigner matrix. We find that in the real case the rate of convergence in this CLT is substantially slower than in the complex case. For the proofs, we develop new static and dynamic hierarchies of extended cumulants
up to any order that can be analysed by a refinement of the zigzag strategy.
\end{abstract}
\vspace{0.15cm}

\footnotesize \textit{Keywords:} Local Law, Zigzag Strategy, Moment Matching, Cumulant Expansion.

\footnotesize \textit{2020 Mathematics Subject Classification:} 60B20, 82C10.
\vspace{0.25cm}
\normalsize

\section{Introduction}

Let $W=W^*$ be an $N\times N$ real symmetric ($\beta=1$) or complex Hermitian ($\beta=2$) Wigner matrix, meaning that the entries of $W$ are centered, independent up to the symmetry constraint and identically distributed. It is well-known that for any self-adjoint polynomial of $W$,  $P(W)=(P(W))^*$, the fluctuation of $\Tr P(W)$ is asymptotically Gaussian as $N$ goes to infinity \cite{Male2022, Belinschi2025}. Recently, the  speed of convergence was also quantified
in terms of asymptotic vanishing of all  cumulants of order three and higher.
 Namely, the following optimal upper bound was shown in~\cite[Theorem~1.6]{Ultra_high_cumulants}
\begin{equation}
\left\vert\kappa_p \left(Y\right)\right\vert\lesssim 
\frac{1}{N^{(p-2)/2}},\qquad Y:=\frac{\Tr P-\E \Tr P}{\Var[\Tr P]},
\label{eq:kappa_P}
\end{equation}
for any $N$-independent self-adjoint polynomial\footnote{More generally, \cite{Ultra_high_cumulants} covers the case when $P$ is a not necessarily self-adjoint polynomial of several independent Wigner matrices and of several deterministic matrices of the same size. Moreover, $p$ may depend on $N$, but then the rhs. of \eqref{eq:kappa_P} should be multiplied by an appropriate function of $p$.} $P=P(W)$ and integer $p\ge 3$. Here $\kappa_p(Y)$ stands for the $p$-th order cumulant of $Y$, and note that $Y$ is a centered random variable with variance one.

For the special case of the Gaussian Unitary Ensemble (GUE), the speed of convergence is much better, in fact even the nontrivial limit of $N^{p-2}\kappa_p(Y)$ was  identified in \cite{2nd_order_freeness_2006}. The analogous result was obtained later in \cite{cumulant_asymptotic_2024} for Hermitian Wigner matrices with Gaussian entries on the diagonal. Notably, the scaling of cumulants in \cite{cumulant_asymptotic_2024} coincides with the one for GUE, and thus the slower convergence rate in \eqref{eq:kappa_P} stems exclusively from non-Gaussianity of diagonal entries of $W$. We refer to the introduction of \cite{Ultra_high_cumulants} for a more detailed overview of the related literature.

By the matrix-valued functional calculus, $\mathrm{Tr} P = \sum_{i=1}^N P(\lambda_i)$, where $\{\lambda_i\}_{i=1}^N$ are the eigenvalues of $W$ labeled in the increasing order. Since $P$ is a polynomial, most terms in this sum are roughly of the same order, therefore  we may say that $\mathrm{Tr} P$ is a \emph{global} linear eigenvalue statistics of $W$. In this paper we focus on  \emph{mesoscopic} linear eigenvalue statistics and therefore replace $\Tr P$ by the \emph{spectrally localized} quantity $\Tr G$, where $G=G(z):=(W-z)^{-1}$ is the resolvent of $W$. The trace of resolvent is sensitive to the mesoscopic scales: in the bulk regime $|E|<2$ and $N^{-1}\ll \eta\ll 1$, the leading contribution to the sum
\begin{equation}
\Tr\Im G(E+\ii\eta) = \sum_{i=1}^N \frac{\eta}{(\lambda_i-E)^2+\eta^2},
\label{eq:Tr_Im_G}
\end{equation}
comes from the $N\eta$ eigenvalues located at a mesoscopic scale $\eta$ around $E$.

Standard local law (see \eqref{eq:kG_av}  later) asserts that $\Tr G(z) $ strongly concentrates around a deterministic quantity, namely
\begin{equation}\label{LL}
       |\langle G(z)-m(z)\rangle| \le \frac{N^\xi}{N\eta},\qquad \eta:=|\Im z|,
\end{equation}
with very high probability for any fixed $\xi>0$. Here $\langle\cdot\rangle:=N^{-1}\Tr$ is the normalized trace and $m(z)$ is the Stieltjes transform of the semicircular distribution,
\begin{equation}
m(z):=\frac{-z+\sqrt{4-z^2}}{2},\quad \Im z\Im m(z)>0.
\label{eq:def_m}
\end{equation}
What is the nature of the fluctuation in the local law? It is known from \cite[Theorem 2.2, Remark 2.4]{HeKnowles} that appropriately rescaled lhs. of \eqref{LL} converges to a standard complex Gaussian random variable $\mathcal{N}_\C$ in distribution:
\begin{equation}
X:=N\eta \langle G(z)-m(z)\rangle \to \frac{1}{\sqrt{2\beta}}\mathcal{N}_\C,
\label{eq:limit_Gauss}
\end{equation}
as $N$ goes to infinity and $N^{-1}\ll\eta \ll 1$. 

As $X$ is a genuinely complex random variable, \eqref{eq:limit_Gauss} does not follow simply from the convergence of cumulants $\kappa_p(X)$ to zero for $p\ge 3$. Instead of $\kappa_p(X)$, one needs to establish this convergence for a larger family $\kappa_{p,q}(X)$ of joint cumulants\footnote{For the precise definition of $\kappa_{p,q}(X)$ see \eqref{eq:def_kappa_pq} later.} of $X$ and $\overline{X}$ of order $p+q\ge 3$. Inspecting the proof in \cite{HeKnowles}, we find the following explicit non-optimal upper bound:  
\begin{equation}
\left\vert\kappa_{p,q}\left(X\right)\right\vert\lesssim \left(\frac{1}{N\eta}\right)^{1/3}+\eta^{1/3},
\label{eq:limit_Gauss_kappa}
\end{equation}
for any fixed $p,q\in\N\cup\{0\}$ with $p+q\ge 3$.

In this paper we improve the bound \eqref{eq:limit_Gauss_kappa} in two different directions in the bulk regime. First, we identify the global non-Gaussian mode of $X$ by computing the leading order term to
$\kappa_{p,q}(X)$ in the global regime $\eta\sim 1$, and estimate the remainder in terms of $(N\eta)^{-1}$: 
\begin{equation}
\kappa_{p,q}\left(X\right) = \frac{\eta^{p+q}}{N^{(p+q-2)/2}}K_{p,q}(z)\kappa_{p+q}^{\mathrm{d}} + \frac{N^\xi}{N\eta} \mathcal{O}\left( \frac{1}{N^{(p+q-3)/2}}+ \frac{1}{(N\eta)^{p+q-3}}\right),
\label{eq:kappapq_new_intro}
\end{equation}
for any fixed $\xi>0$. The explicit function $K_{p,q}$ is defined later in \eqref{eq:def_K} and $\kappa_{p+q}^\dif$ is the cumulant of order $p+q\ge 3$ of a normalized diagonal entry of $W$, for more details see Theorem~\ref{theo:main}(i). Note that for $\eta\sim 1$ and non-vanishing $\kappa_{p+q}^\dif$, the first term in the rhs. of \eqref{eq:kappapq_new_intro} scales as $N^{-(p+q-2)/2}$, which coincides with the scaling in \eqref{eq:kappa_P}. On the other hand, when the diagonal entries of $W$ are Gaussian, $\kappa_{p+q}^\dif =0$,  then $\kappa_{p,q}(X)$ is much smaller than in the general case. This aligns with the conclusions of \cite{cumulant_asymptotic_2024} discussed above that non-Gaussianity of linear statistics primarily comes from the non-Gaussianity of the diagonal elements. We also show (Theorem~\ref{theo:main1}) that for one-sided cumulants, $\kappa_{p,0}(X)$ and $\kappa_{0,q}(X)$, \eqref{eq:kappapq_new_intro} holds with much better bound on the error term.

To motivate the second improvement of \eqref{eq:limit_Gauss_kappa}, we distinguish between  upper bounds on $\kappa_{p,q}(X)$ in terms of negative powers of $N$ and of $N\eta$. Let $\epsilon>0$ be a small $N$-independent exponent. In the regime $\eta\sim N^{-1+\epsilon}$ bounds of the second type depend on $\epsilon$ and approach the trivial bound \eqref{LL} of order $N^\xi$ as $\epsilon$ decreases to zero. In contrast, bounds in terms of $N$ do not depend on $\epsilon$. While \eqref{eq:limit_Gauss_kappa} and \eqref{eq:kappapq_new_intro} imply the bounds of the second type, the following question naturally arises.

\smallskip

\noindent\textbf{Question:} Fix $p,q\in\N$ with $p+q\ge 3$. Does there exist a positive $\gamma=\gamma(p,q)>0$,
independent of $N$ and $\epsilon$, such that $|\kappa_{p,q}(X)|\lesssim N^{-\gamma}$ uniformly in $N^{-1+\epsilon}\le\eta\lesssim 1$? 

\smallskip

\noindent Our result is somewhat surprising: we answer this question positively for complex Hermitian and negatively for real symmetric Wigner matrices. Namely, for $\beta=2$ we show that
\begin{equation}
\left\vert\kappa_{p,q}\left(X\right)\right\vert \lesssim \frac{1}{\sqrt{N}}\bm{1}_{p+q=3} + \frac{1}{N},
\label{eq:main_pq_intro}
\end{equation} 
while for $\beta=1$ we prove that $\kappa_{2,2}(X)$ scales as $(N\eta)^{-3}$ for $\eta\sim N^{-1+\epsilon}$. In contrast, for one-sided cumulants $\kappa_{p,0}(X)$ and $\kappa_{0,q}(X)$, we find the positive answer in both symmetry classes.

We remark that the difference between the real and complex cases for a  related question that concerns the expectation of $\langle  G \rangle$ and  not its higher cumulants was observed in \cite{Shamis} in the following context. For $W$ from the Gaussian Orthogonal or Unitary Ensemble (GOE/GUE), the expected density of states\footnote{More precisely, $\rho(E):= \pi^{-1} \lim_{\eta\to 0+} \E \langle \Im G(E+\ii \eta)\rangle$. For Gaussian ensembles this limit is well-defined. } $\rho$ satisfies
\begin{equation}
\rho(E) =\rho_{sc}(E) + N^{-1}f(E) + \mathcal{O}\big(N^{-3/2}\big),\quad |E|\le 2-\delta,
\end{equation}
where $\rho_{sc}(E)=(2\pi)^{-1}\sqrt{[4-E^2]_+}$ is the semicircular density and $f$ is an explicit bounded  function. For GOE $f$ does not depend on $N$, while for GUE $f$ oscillates with frequency $N$. Since $\E\langle G(z)-m(z)\rangle$ with $m(z)$ defined in \eqref{eq:def_m} is the Stieltjes transform of $\rho-\rho_{sc}$, for GUE this expectation is smaller in absolute value than for GOE in the regime $\eta\sim N^{-1+\epsilon}$ due to the oscillatory integral, see also \cite[Lemma~4.4]{Knowles20} for a direct proof.

We conclude this introduction by discussing the proof techniques used in this paper and comparing them to the ones present in the literature. First, the arguments in \cite{Ultra_high_cumulants} and in the related free probability literature concerning cumulants of polynomials are based on the moment method, which is limited to the global regime, $\eta\sim1$,  and thus is not applicable in the current setting. In the mesoscopic literature, \cite{HeKnowles} is based on the static cumulant expansion method. Though we follow this approach in the global regime, its technical complexity would not allow us to prove \eqref{eq:kappapq_new_intro} in the mesoscopic regime, as well as to distinguish between the real and complex cases for small $\eta$. Instead, we use a dynamical method, namely a new version of the \emph{zigzag strategy} introduced in \cite{cipolloni2024out,Cipolloni_meso, cipolloni2023eigenstate}. This is a well-established tool for the proof of local laws, that is high probability concentration bounds for resolvents. However, prior to the current paper, it has never been used for computation of expectations and cumulants with precision exceeding the one given by local laws. Zigzag strategy consists of three steps: (i) estimates in the global regime, (ii) their propagation down to the real line via characteristic flow at the cost of adding a Gaussian component to $W$ (\emph{zig step}) and (iii) removal of the Gaussian component via Green function comparison (\emph{GFT/zag step}). In this paper we use (ii) in two different ways: in the proof of \eqref{eq:kappapq_new_intro} as in the original zigzag strategy but for expectations and cumulants, and in the proof of \eqref{eq:main_pq_intro} as a GFT to transfer the fine cancellations within the cumulant structure from GUE to a Gaussian divisible case. Both applications are new from the methodological side, and the second one is also conceptually new. Finally, due to the high precision of \eqref{eq:kappapq_new_intro} and \eqref{eq:main_pq_intro}, the standard zag step is not sufficient and thus we replace it by the novel \emph{multiple moment matching} approach. The latter one can be viewed as a high-precision analogue of the four moment matching lemma \cite[Lemma~6.5]{bulk_univ_genW}, \cite{TaoVu11} under mild regularity assumptions on the single entry distribution.

\subsection*{Notations and conventions}
We denote the sets of positive integers, real numbers and complex numbers by $\N$, $\R$ and $\C$ respectively. We set $[k] := \{1, ... , k\}$ for $k \in \N$ and $\langle A \rangle := N^{-1} \mathrm{Tr}(A)$, $N \in \N$, for the normalized trace of an $N \times N$-matrix $A$. For positive quantities $f, g$ we write $f \lesssim g$, $f \gtrsim g$, to denote that $f \le C g$ and $f \ge c g$, respectively, for some $N$-independent constants $c, C > 0$ that depend only on the basic control parameters of the model in Assumptions~\ref{ass:momass} and~\ref{ass:support} below. In informal explanations, we frequently use the notation $f\ll g$, which indicates that $f$ is "much smaller" than $g$ when $N$ is large. 

We denote vectors by bold-faced lower case Roman letters $\boldsymbol{x}, \boldsymbol{y} \in \C^{N}$, for some $N \in \N$. Moreover, for vectors $\boldsymbol{x}, \boldsymbol{y} \in \C^{N}$ we define
 \begin{equation*}
	\langle \boldsymbol{x}, \boldsymbol{y} \rangle := \sum_i \bar{x}_i y_i. 
\end{equation*}
Matrix entries are indexed by lower case Roman letters $a, b, c , ... ,i,j,k,... $ from the beginning or the middle of the alphabet and unrestricted sums over those are always understood to be over $\{ 1 , ... , N\}$. 

For a pair of real or complex-valued stochastic processes $X=X(t)$ and $Y=Y(t)$, we denote their covariation process by~$[X,Y]_t$. Finally, we will use the concept  \emph{with very high probability},  meaning that for any fixed $D > 0$, the probability of an $N$-dependent event is bigger than $1 - N^{-D}$ for all $N \ge N_0(D)$. We will use the convention that $\xi > 0$ denotes an arbitrarily small positive exponent, independent of $N$.
 Moreover, we introduce the common notion of \emph{stochastic domination} (see, e.g., \cite{loc_sc_gen}): For two families
\begin{equation*}
	X = \left(X^{(N)}(u) \mid N \in \N, u \in U^{(N)}\right) \quad \text{and} \quad Y = \left(Y^{(N)}(u) \mid N \in \N, u \in U^{(N)}\right)
\end{equation*}
of non-negative random variables indexed by $N$, and possibly a parameter $u$, we say that $X$ is stochastically dominated by $Y$, if for all $\epsilon, D >0$ we have 
\begin{equation*}
	\sup_{u \in U^{(N)}} \mathbf{P} \left[X^{(N)}(u) > N^\epsilon Y^{(N)}(u)\right] \le N^{-D}
\end{equation*}
for large enough $N \ge N_0(\epsilon, D)$. In this case we write $X \prec Y$. If for some complex family of random variables we have $\vert X \vert \prec Y$, we also write $X = O_\prec(Y)$.

\section{Main results}

We consider an $N\times N$ Wigner matrix $W=\left(w_{ab}\right)_{a,b=1}^N$ which is either real symmetric ($\beta=1$) or complex Hermitian ($\beta=2$). The entries of $W$ are independent up to the symmetry constraint and distributed according to
\begin{equation*}
w_{aa}\stackrel{\rd}{=}\frac{1}{\sqrt{N}}\chi_{\rm{d}},\quad w_{ab}\stackrel{\rd}{=}\frac{1}{\sqrt{N}}\chi_{\rm{od}},\,\, a>b,
\end{equation*}
where $\chi_\dif, \chi_{\mathrm{od}}$ are $N$-independent random variables. For $\beta=1$ (respectively, $\beta=2$) we have $\chi_{\mathrm{od}}\in\R$ (respectively, $\chi_{\mathrm{od}}\in\C$), while $\chi_\dif\in\R$ in both cases $\beta=1,2$. We impose two assumptions on these random variables. The first one is the standard moment assumption:
\begin{assumption}
\label{ass:momass}
Both $\chi_\dif$, $\chi_{\mathrm{od}}$ are centered $\E\chi_\dif=\E \chi_{\mathrm{od}}=0$, their variances equal to $\E|\chi_{\mathrm{od}}|^2=1$ and $\E|\chi_{\mathrm{d}}|^2=2/\beta$. In the complex case we also assume that $\E\chi_{\mathrm{od}}^2=0$. Furthermore, we assume the existence of high moments, i.e. for any $p\in\N$ there exists a constant $C_p>0$ such that
\begin{equation}
\label{eq:momass}
\E\big[|\chi_\dif|^p+|\chi_{\mathrm{od}}|^p\big]\le C_p.
\end{equation}
\end{assumption} 

In the special case when the entries of $W$ are Gaussian, $W$ is called a GOE matrix for $\beta=1$ and a GUE matrix for $\beta=2$. To state the second assumption, we introduce the following notion.

\begin{definition}[Non-degenerate support]\label{def:support} Let $\chi$ be either real or complex random variable. Denote the law of $\chi$ by $\mu$, which is a probability measure on $\R$ or $\C$, respectively. We say that $\chi$ has non-degenerate support if the following holds:
\begin{itemize}
\item[$\bullet$]  [Real case] The support of $\mu$ contains infinitely many points.
\item[$\bullet$]   [Complex case] There is no nonzero polynomial $P(z,\overline{z})\not\equiv 0$, such that the support of $\mu$ is contained in the zero set of $P$.
\end{itemize}
\end{definition}

In particular, a real (complex) valued random variable $\chi$ has non-degenerate support if its law has an absolutely continuous component wrt. the Lebesgue measure on $\R$ (respectively, $\C$).

\begin{assumption}\label{ass:support} We assume that the random variables $\chi_\dif$ and $\chi_{\mathrm{od}}$ have non-degenerate support.
\end{assumption}

We define the cumulants of a complex-valued random variable $X$ by\footnote{\label{ftn:kappa} We interpret \eqref{eq:def_kappa_pq} in such a way that first the differentiation in $x,y$ is performed formally, and then the expectations in $X$ are evaluated. Then \eqref{eq:def_kappa_pq} is well-defined for any $X$ with all finite moments even when the expectation in the rhs. of \eqref{eq:def_kappa_pq} does not exist.} 
\begin{equation}
\kappa_{p,q}(X):=\partial_{x}^p\partial_{y}^q \log \E \exp\left\lbrace xX + y\overline{X}\right\rbrace\big|_{x=y=0},\quad p,q\in \N\cup\{0\}.
\label{eq:def_kappa_pq}
\end{equation}
For $p=q=1$ \eqref{eq:def_kappa_pq} recovers the variance of $X$, and in the special case when $X$ is Gaussian we have $\kappa_{p,q}(X)=0$ for all $p+q\ge 3$. For the spectral parameters $z$ that we consider, we
define the \emph{bulk spectral domain}:
\begin{equation}
\mathcal{D}=\mathcal{D}(\delta,\epsilon,C):=\left\lbrace E+\ii\eta\,:\, |E|\le 2-\delta,\, N^{-1+\epsilon}\le \eta\le C\right\rbrace,
\label{eq:def_D}
\end{equation}
where the (small) $\delta, \epsilon>0$ and  (large) $C>0$ are fixed control parameters. Note that $\mathcal{D}$ contains the macroscopic scales ($\eta\sim 1$) as well as the entire mesoscopic regime $N^{-1}\ll \eta\ll 1$.

Throughout the paper, all implicit constants in $\lesssim $ and $\mathcal{O}(\cdot)$ may depend on the distribution of $\chi_\dif$ and $\chi_{\rm{od}}$, and in particular on constants $C_p$ from Assumption~\ref{ass:momass}, as well as on $\delta,\epsilon,C$ in the definition of $\mathcal{D}$, unless stated otherwise.

Now we are ready to present our main result. 

\begin{theorem}\label{theo:main} Let $W$ be a Wigner matrix satisfying Assumptions~\ref{ass:momass} and~\ref{ass:support}. Fix (small) $\delta, \epsilon>0$, (large) $C>0$ and consider the bulk spectral domain
$\mathcal{D}=\mathcal{D}(\delta,\epsilon,C)$. Let
\begin{equation*}
X:= N\eta\langle G(z)-m(z)\rangle
\end{equation*}
be the natural rescaling of $\langle G(z)-m(z)\rangle$. The following results hold for any fixed $p,q\in \N\cup\{0\}$ with $p+q\ge 3$, uniformly in $z=E+\ii\eta\in\mathcal{D}$.

\emph{(i)} Both for $\beta=1$ and $\beta=2$ it holds that
\begin{equation}
\kappa_{p,q}\left(X\right) = \frac{\eta^{p+q}}{N^{(p+q-2)/2}}K_{p,q}(z)\kappa_{p+q}^{\mathrm{d}} + \frac{N^\xi}{N\eta} \mathcal{O}\left( \frac{1}{N^{(p+q-3)/2}}+ \frac{1}{(N\eta)^{p+q-3}}\right),
\label{eq:kappapq_new}
\end{equation}
for any fixed $\xi>0$. Here $\kappa^{\dif}_{p+q}:=\kappa_{p+q}(\chi_{\rm{d}})$ is the cumulant of a normalized diagonal entry $w_{aa}$ of $W$, and the function $K_{p,q}$ is explicitly given by
\begin{equation}
K_{p,q}(z) := \left( - \frac{m^2}{1-m^2}\right)^p \left(  - \frac{\overline{m}^2}{1-\overline{m}^2}\right)^q, 
\qquad m= m(z).
\label{eq:def_K}
\end{equation}

\emph{(ii)} In the complex case ($\beta=2$) we further have
\begin{equation}
\left\vert\kappa_{p,q}\left(X\right)\right\vert \lesssim \frac{1}{\sqrt{N}}\bm{1}_{p+q=3} + \frac{1}{N},
\label{eq:main_pq}
\end{equation} 
while in the real case ($\beta=1$) it holds that
\begin{equation}
\kappa_{2,2}\left(X\right) = \frac{5\ii}{2} \left(-\frac{m}{1-m^2}\right)^3 \frac{1}{(N\eta)^3} + \mathcal{O}\left(\frac{1}{(N\eta)^4}+\eta\right).
\label{eq:kappa22_real}
\end{equation}
\end{theorem}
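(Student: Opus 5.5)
The proof follows the three-step zigzag scheme announced in the introduction, applied to the joint cumulants of $\langle G\rangle$ and $\langle G^*\rangle$ rather than to high-probability bounds. We encode $\kappa_{p,q}(X)$ through the extended cumulants $\kappa(\langle G_1\rangle,\dots,\langle G_{p+q}\rangle)$ with $G_i\in\{G(z),G(\bar z)\}$ and rescale by $(N\eta)^{p+q}$. The analysis then has three steps.

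\textbf{Step 1: global regime $\eta\sim 1$.} We run a static cumulant expansion. Write $\langle G-m\rangle$ through the identity $(1+mG)$ type self-consistent equation, $\langle G-m\rangle = -\frac{m}{1-m^2}\langle \underline{WG}\rangle+\dots$. Then expand $\E[\underline{WG}\,\prod(\cdots)]$ in the cumulants of the entries. This yields a hierarchy: a cumulant of order $k$ is expressed through cumulants of order $k$ with one trace replaced by a product $\langle G_iG_j\rangle$, plus terms involving higher entry-cumulants.

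Counting powers of $N$ identifies the leading contribution. It comes from hitting all $p+q$ traces simultaneously with the $(p+q)$-th cumulant of a single diagonal entry $w_{aa}$, which has size $N^{-(p+q)/2}\kappa^{\dif}_{p+q}$. Each derivative $\partial_{w_{aa}}\langle G\rangle = -N^{-1}(G^2)_{aa}\approx -N^{-1}m'$. Combined with the prefactor, this gives the factors $-m^2/(1-m^2)$, using $m'=m^2/(1-m^2)$. This produces $K_{p,q}$, and the off-diagonal contributions at this order cancel or are lower order. All other terms are bounded by $N^{-(p+q-1)/2}$ using the multi-resolvent local laws.

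\textbf{Step 2: zig step.} We propagate along the characteristic flow $\partial_t z_t=-m(z_t)-z_t/2$ with the Ornstein--Uhlenbeck matrix flow. By Itô's formula, the time derivative of the extended cumulants along characteristics contains only terms that are again cumulants with traces of products of resolvents, plus a linear term which is absorbed. We write the dynamic hierarchy and bound it inductively in the order of the cumulant, using $\langle G_1\cdots G_k\rangle$ local laws. The aim is to show that the deterministic leading term in part (i) is transported exactly, because $K_{p,q}$ is invariant along characteristics up to the factor $\eta^{p+q}$. Errors accumulate as $N^\xi(N\eta)^{-1}\big(N^{-(p+q-3)/2}+(N\eta)^{-(p+q-3)}\big)$.

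\textbf{Step 3: remove the Gaussian component and prove part (ii).} A standard GFT loses too much precision here, so we use multiple moment matching. Assumption~\ref{ass:support} guarantees that one can find a matrix $\widetilde W$ with Gaussian component whose entry distributions match those of $W$ to arbitrarily high fixed order. The remaining error is then below the target precision.

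For part (ii), $\beta=2$, we run the flow as a GFT from GUE. For GUE the oscillatory cancellations (as in \cite{Knowles20}) give $\kappa_{p,q}\lesssim N^{-1}$ uniformly in $\eta$. The dynamics transfers this up to the $N^{-1/2}\bm 1_{p+q=3}$ contribution from third cumulants. For $\beta=1$, the extra $\sum_{ab}G_{ab}G_{ba}$-type transposition terms in the Itô correction do not cancel. We compute $\kappa_{2,2}$ from the GOE hierarchy explicitly: the leading term is a product of three $\langle G^2\rangle$-type factors, giving $\frac{5\ii}{2}(-m/(1-m^2))^3(N\eta)^{-3}$, with the $\eta$ error coming from $\eta$-expansion of $m$.

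The main obstacle is Step 2 for (ii). We must show that the dynamics preserves the fine $N^{-1}$ cancellation uniformly down to $\eta\sim N^{-1+\epsilon}$. This requires controlling the two-resolvent terms $\langle G(z)G(\bar z)\rangle\sim\eta^{-1}$ without losing $\eta$ powers. I would first try a comparison of the hierarchies for $W$ and for GUE, term by term, so that the large terms cancel.
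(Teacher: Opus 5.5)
\textbf{Part (i).} Your outline for part (i) follows the paper's route: static expansion at $\eta\sim 1$, It\^o's formula for cumulants along the characteristic flow, and GFT via multiple moment matching. There is one caveat in the global step. For non-Gaussian entries the hierarchy does not close on traces. Cumulant-expansion terms in which the derivatives hit only some of the arguments leave cumulants whose first slot contains resolvent \emph{entries} $(G^k)_{ba}$. Local laws bound these only by $N^\xi$, so ``bounded by local laws'' does not give $N^{-(p+q-1)/2}$. The paper has to enlarge the family to isotropic quantities $N^{\mathrm{od}(\bm\phi)/2}\prod_i (G^{k_i})_{\phi_i}$ and run master inequalities on that family. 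A smaller point: $K_{p,q}$ is not invariant along the flow. What holds is that $K_{p,q}(z_t)\kappa^{\mathrm{d}}_{p+q}(t)$, with $\kappa^{\mathrm{d}}_{p+q}(t)=e^{-t(p+q)/2}\kappa^{\mathrm{d}}_{p+q}(0)$, solves the same linear ODE as $\kappa_{p,q}(\Tr[G_t-m_t])$.

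\textbf{Part (ii): the genuine gap.} You leave part (ii) open as the ``main obstacle'', and the GUE input you propose is the wrong object. A bound $\kappa_{p,q}\lesssim N^{-1}$ for GUE cannot be propagated, because the It\^o equation for $\kappa_{p,q}(\Tr[G_t-m_t])$ is not closed. Its forcing is $N^{-1}$ times other cumulants for $W_t$, e.g.\ $\kappa(\Tr[G_t-m_t]\Tr[G_t^2-m_{2,t}],\ldots)$ or $\kappa(\Tr[(G_tG_t^*)^2],\ldots)$. Reaching $N^{-1}$ requires these to be $\mathcal{O}(\eta_t)$ after the natural normalization by powers of $\eta_t$. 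The local-law hierarchy only gives $N^\xi(N^{-1/2}+(N\eta_t)^{-1})^{2p'-2-n}$ for a $p'$-th order cumulant containing $n$ traces, which is useless at $\eta\sim N^{-1+\epsilon}$. The missing ideas are:
\begin{enumerate}
\item For GUE, one needs $\E[\eta^{|\bm k|+|\bm l|}\Delta(\bm k)\overline{\Delta(\bm l)}]=\Phi_0(\bm k,\bm l)+\mathcal{O}(\eta)$ for all products of traces of powers of $G$ and $G^*$, with $\Phi_0$ independent of $N$ and $z$. This says every coefficient of $(N\eta)^{-d}$, $d\ge 1$, in a static expansion vanishes. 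It is an algebraic cancellation driven by $\mathfrak{a}_0\Im m_0=\ii/2$ and a binomial identity, not an oscillation effect, and it is exactly what fails for GOE.
\item This is transferred to the Gauss-divisible case by running the flow on the \emph{difference} $\E^{(W)}-\E^{(G)}$ of such moments. The difference obeys a closed linear hierarchy with $\mathcal{O}(1)$ initial data, hence is $\mathcal{O}(\eta_t)$ after normalization.
\item Via the cumulant--moment relation, each normalized cumulant is then an $N$-independent constant plus $\mathcal{O}(\eta_t)$. The constant vanishes whenever $n<2p'-2$, by comparison with the local-law bound at $\eta_T\le N^{-\xi_0}$.
\item For $(p,q)=(2,1)$ that condition fails. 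One needs the explicit cancellation $0+(8\ii)^{-1}+(8\ii)^{-1}-2(8\ii)^{-1}=0$ among the four forcing cumulants.
\end{enumerate}
The $N^{-1/2}\bm{1}_{p+q=3}$ term is simply the propagated global initial value, not a separate effect of the dynamics.

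\textbf{Real case.} Your real case has the same transfer gap: GOE to Gauss-divisible needs step 2 before the GFT. Also, $\tfrac{5\ii}{2}$ comes from a third-order $(N\eta)^{-1}$ expansion of $\E|X|^2$ and $\E|X|^4$ for GOE, whose orders $0$--$2$ cancel in $\kappa_{2,2}$, not from a product of three $\langle G^2\rangle$-type factors.
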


Note that part (i) of the theorem is mainly relevant for relatively large $\eta$’s where it explicitly identifies the leading order non-Gaussianity of the cumulants for both symmetry classes. Part (ii) is more interesting in the small $\eta$ regime, where the explicit term in~\eqref{eq:kappapq_new} becomes smaller than the error terms. In particular, for $\beta=1$ we identify a new leading term of order $(N\eta)^{-3}$ in this regime. For $\beta=2$ the analogous terms of order $(N\eta)^{-k}$ all vanish giving rise to a much faster convergence to the Gaussian limit for $\beta=2$ than $\beta=1$.

The situation is much simpler for one-sided cumulants, $\kappa_{p,0} = \bar{\kappa}_{0,p}$, i.e. when one of the parameters $p,q$ equals to zero. In this case there is no difference between the real and complex case. Moreover, no powers of $N\eta$ appear in the estimates and the global mode corresponding to the first term in the rhs. of \eqref{eq:kappapq_new} becomes the leading term in the entire mesoscopic regime. Crudely speaking, these improvements are due to the fact that one-sided cumulants are analytic functions of $z$.

\begin{theorem}[One-sided cumulants]\label{theo:main1} Assume the set-up and conditions of Theorem~\ref{theo:main}. For $q=0$ and any fixed $p\ge 3$ it holds that
\begin{equation}
\kappa_{p,0}\left(X\right) = \frac{\eta^p}{N^{(p-2)/2}}\left(K_{p,0}(z)\kappa_p^{\mathrm{d}} + \mathcal{O}\left(\frac{1}{N^{1/2}}\right)\right),
\label{eq:main_p0}
\end{equation}
uniformly in $z\in\mathcal{D}$.
\end{theorem}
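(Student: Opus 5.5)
Since $\kappa_{p,0}(X)=(N\eta)^p\kappa_p(\langle G\rangle)$, the statement is equivalent to
\begin{equation*}
\kappa_p(\langle G(z)\rangle)=N^{-3p/2+1}\Big(K_{p,0}(z)\kappa_p^{\mathrm d}+\mathcal{O}(N^{-1/2})\Big),
\end{equation*}
and only the holomorphic function $z\mapsto\kappa_p(\langle G(z)\rangle)$ enters. The plan has three steps: (a) prove this expansion in the global regime $\eta\sim1$; (b) transport it down to $\eta\ge N^{-1+\epsilon}$ for Gaussian divisible ensembles along the characteristic flow (zig step); (c) remove the Gaussian component by multiple moment matching (zag step). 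Analyticity is what makes this work with no $(N\eta)^{-1}$ losses. The one-sided cumulants involve only $G$, never $G^*$. So the expansion never produces $\Im G$ or products $GG^*$, which are the source of the $1/(N\eta)$ factors in Theorem~\ref{theo:main}(i).

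\textbf{Global regime.} I would derive a static hierarchy for the joint cumulants $\kappa(\langle G_1\rangle,\dots,\langle G_p\rangle)$ using cumulant expansion in the entries of $W$. Start from $\langle G\rangle=m-m\langle \underline{WG}\rangle+m\langle G-m\rangle^2$-type self-consistent identities. The leading non-Gaussian contribution comes from the $p$-th order cumulant term of the diagonal entries $w_{aa}$. Each diagonal entry contributes $\partial_{w_{aa}}\langle G\rangle=-N^{-1}(G^2)_{aa}\approx -N^{-1}m^2/(1-m^2)$. Combining $p$ such derivatives with $\kappa_p(w_{aa})=N^{-p/2}\kappa_p^{\mathrm d}$ and summing over $a$ gives exactly
\begin{equation*}
N\cdot N^{-p}N^{-p/2}\Big(-\tfrac{m^2}{1-m^2}\Big)^p\kappa_p^{\mathrm d}.
\end{equation*}
Off-diagonal cumulants of order $\ge3$ and the Gaussian terms should contribute only at relative order $N^{-1/2}$. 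I would verify this by induction on $p$, using the bounds $\kappa_k(\langle G\rangle)=\mathcal{O}(N^{-3k/2+1})$ for $k<p$ together with the local law \eqref{LL}.

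\textbf{Zig step.} Next I would run the Ornstein--Uhlenbeck flow $\rd W_t=-\tfrac12W_t\,\rd t+N^{-1/2}\rd B_t$ together with the characteristic flow $\partial_t z_t=-m(z_t)-z_t/2$. By Itô's formula, the joint cumulants of $\langle G_t(z_t)\rangle$ satisfy a dynamic hierarchy. The linear part has the form $\tfrac{p}{2}\,\kappa$ plus Gaussian second-order terms, and these cancel the drift along characteristics. The Itô correction couples $\kappa_p$ to $\kappa_{p+1}$-type terms of the form $N^{-2}\langle G^2\rangle\langle G^2\rangle$. Because all resolvents are holomorphic in the same half plane, these terms are controlled by $|\langle G^2\rangle|\lesssim1$ in the bulk; by contrast, $\langle \Im G\rangle/\eta$ would blow up. Meanwhile the non-Gaussian input decays like $e^{-pt/2}$ and is transported exactly as $K_{p,0}(z_t)$. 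Gronwall along the flow then gives the expansion at $z_T$ with $\Im z_T\ge N^{-1+\epsilon}$, for $W_T=\sqrt{1-T}\,\widetilde W+\sqrt{T}\,\mathrm{GUE/GOE}$ with $T\sim1$.

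\textbf{Zag step, the main obstacle.} The final step removes the Gaussian component to the precision $N^{-3p/2+1/2}$. This is far below what a standard GFT gives, so I would use multiple moment matching. Given $W$, Assumption~\ref{ass:support} (non-degenerate support) allows one to construct $\widetilde W$ and $T$ such that $W_T$ matches the moments of $W$ up to an arbitrarily high fixed order $M$. Without it, a finitely supported law cannot be matched. Then a Lindeberg swap costs at most $N^2\cdot N^{-(M+1)/2}$ times the derivative bounds, and since these derivatives are holomorphic they are $\mathcal{O}(1)$ with no $\eta$ factors. This is negligible once $M$ is large. The remaining delicacy is to check that $K_{p,0}\kappa_p^{\mathrm d}$ is unchanged, since only the moments of $\chi_{\mathrm d}$ up to order $p$ enter it, and these are matched. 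I expect the construction of the moment-matching ensemble and the uniformity of the derivative bounds to be the hardest parts.
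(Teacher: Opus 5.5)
Your three-step plan is the paper's route: a global static expansion, a zig step along the Ornstein--Uhlenbeck and characteristic flows, and a zag step via multiple moment matching plus a Lindeberg GFT. You also correctly identify why one-sided cumulants lose no powers of $\eta$: only holomorphic traces $\Tr G^k$ appear, never $\Tr G^k(G^*)^l$.

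\textbf{The gap: neither hierarchy closes as you state it.}

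\emph{Global step.} An induction on $p$ fed only by $\kappa_k(\langle G\rangle)=\mathcal{O}(N^{-3k/2+1})$, $k<p$, cannot control the error terms. Expanding $\kappa(\Tr[\underline{WG}],\Delta(1),\ldots,\Delta(1))$ produces three kinds of terms that this hypothesis does not cover:
\begin{itemize}
\item cumulants of the same order $p$ with a product of traces in one argument, e.g. $N^{-1}\kappa(\Delta(1,1),\{\Delta(1)\}_{p-1,0})$;
\item cumulants of order $p-1$ with higher powers, such as $N^{-1}\kappa(\Delta(3),\{\Delta(1)\}_{p-2,0})$;
\item from the third and higher order cumulant terms in which the derivatives do not hit every argument, cumulants whose first argument is $\sum_{a,b}$ of products like $G_{aa}G_{bb}G_{ab}$, which are not traces at all.
\end{itemize}
For the last kind the local law only gives $N^{-3/2}\cdot N^2\cdot N^{-1/2+\xi}=N^{\xi}$. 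The requirement is $N^{-(p-1)/2}$ in the normalization of $\Delta(1)$.

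The paper closes this hierarchy in three moves. It passes to the enlarged family \eqref{eq:extended_family}, with isotropic factors $\mathrm{Iso}(\bm{k},\bm{\phi})$ (normalized by $N^{\mathrm{od}(\bm{\phi})/2}$) in the first slot. It proves the master inequalities \eqref{eq:global_master}. It then runs a lexicographic induction in $(p,d,n,S)$ to get $\Psi^{(p)}(n,S)\lesssim N^{-(p-2)/2+\xi}(N^{-1}+\bm{1}_{n\ge p}N^{(n-p)/2})$, including the extra $N^{-1}$ for $n=p-1$. Only with these bounds are all terms other than $a=b$, $\ell=p$ negligible, and then your heuristic computation of $K_{p,0}\kappa_p^{\mathrm d}$ goes through.

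\emph{Zig step.} The same problem appears here. By \eqref{eq:kappa_dif} with $q=0$, the forcing for $\kappa_{p,0}(\Delta_t(1))$ is $N^{-1}$ times $\kappa(\Delta_t(1,2),\{\Delta_t(1)\}_{p-1,0})$, $\bm{1}_{\beta=1}\kappa(\Delta_t(3),\{\Delta_t(1)\}_{p-1,0})$ and $\kappa(\Delta_t(4),\{\Delta_t(1)\}_{p-2,0})$. The drift puts a product of two traces inside one argument. The covariation merges two arguments and lowers the order to $p-1$, so there is no coupling to $\kappa_{p+1}$-type terms $N^{-2}\langle G^2\rangle\langle G^2\rangle$.

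To close Gronwall you need these cumulants to be $\mathcal{O}(N^{-(p-3)/2+\xi})$ uniformly in $t$. That is the bound $\Theta^{(p)}_t(n,S)\lesssim N^{(n-2p+2)/2+\xi}$ for all one-sided cumulants of products $\Delta_t(\bm{k}_j)$. The paper proves it through separate dynamic master inequalities \eqref{eq:Theta_master}, with initial data from the extended global bound on $\Psi$.

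Holomorphy is what keeps those inequalities free of negative powers of $\eta_t$. The deterministic part $Nm_{k,t}$ of $\Tr G_t^k$ is $\mathcal{O}(N)$, whereas that of $\Tr G_t^k(G_t^*)^l$ is of order $N\eta_t^{-(k+l-1)}$. But holomorphy gives no bound on the fluctuating cumulants themselves, so the remark $|\langle G^2\rangle|\lesssim 1$ does not suffice.

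\textbf{Minor point on the zag step.} The derivative bounds are not $\mathcal{O}(1)$ because of holomorphy: $(G^2)_{ab}$ is only $\prec\eta^{-1}$. This is harmless, since each $\Delta(1)$ carries at most one such factor. The resulting loss is at most $\eta^{-p}\le N^{p}$, which is absorbed by matching $M\ge 3p+2$ moments.
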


For simplicity we stated our results for a single $X=X(z)$, but 
we can easily extend them to the joint cumulants of a collection of $X(z_j)$’s with different spectral parameters:

\begin{remark}[Different spectral parameters]\label{rem:ext} The results of Theorem~\ref{theo:main}(i) and Theorem~\ref{theo:main1} also hold  for the cumulants of resolvent traces evaluated at different spectral parameters. For example, setting $X_j:=N\eta_j\langle G(z_j)-m(z_j)\rangle$, \eqref{eq:kappapq_new} becomes
\begin{equation}
\begin{split}
\kappa\left(\big\lbrace X_j\big\rbrace_{j=1}^r\right) =& \frac{\prod_{j=1}^r \eta_j}{N^{(r-2)/2}}\left(K\left(\{z_j\}_{j=1}^r\right)\kappa_{r}^{\mathrm{d}} + \mathcal{O}\left(\frac{1}{N^{1/2}}\right)\right)\\
 + &\frac{N^\xi}{N\eta_*}\frac{\prod_{j=1}^r \eta_j}{\eta_*^r}\mathcal{O}\left(\frac{1}{N^{(r-3)/2}}+\frac{1}{(N\eta_*)^{r-3}}\right),
\end{split}
\label{eq:different_z}
\end{equation}
for any fixed $r\in \N$, $r\ge 3$, uniformly in $z_j=E_j\pm \ii\eta_j\in \mathcal{D}\cup \overline{\mathcal{D}}$. Here $\overline{\mathcal{D}}:=\left\lbrace\overline{z}:z\in\mathcal{D}\right\rbrace$, $\eta_*:=\min_{j\in [r]} \eta_j$ and $K$ is given by
\begin{equation}
K\left(\{z_j\}_{j=1}^r\right):=\prod_{j=1}^r \left(-\frac{m_j^2}{1-m_j^2}\right),\qquad m_j:=m(z_j).
\end{equation}
In particular $K_{p,q}(z)$ coincides with $K$ evaluated at the set of $p+q$ arguments, $p$ of those equal to $z$ and $q$ to $\overline{z}$. The proofs of~\eqref{eq:different_z} and the analogue of~\eqref{eq:main_p0} are identical to those of \eqref{eq:kappapq_new} and \eqref{eq:main_p0}, respectively, hence we omit them.
\end{remark}

Finally, we discuss the implications of Theorems~\ref{theo:main} and \ref{theo:main1} for the real valued random variable~$\Im X$.

\begin{remark}[Cumulants of $\Im X$] Assume the set-up and conditions of Theorem~\ref{theo:main}. By \eqref{eq:kappapq_new} and multi-linearity of cumulants it holds that 
\begin{equation}
\begin{split}
\kappa_p\left(\Im X\right) = &\frac{\eta^{p+q}}{N^{(p+q-2)/2}}K_p^\Im (z)\kappa_{p}^{\mathrm{d}} + \frac{N^\xi}{N\eta} \mathcal{O}\left( \frac{1}{N^{(p-3)/2}}+ \frac{1}{(N\eta)^{p-3}}\right),\\
K_p^\Im(z):=&\left(-\Im \frac{m^2}{1-m^2}\right)^p,\quad m=m(z),
\end{split}
\label{eq:kappapq_Im}
\end{equation}
for any fixed integer $p\ge 3$, uniformly in $z\in\mathcal{D}$. Since $K_p^\Im (z)\neq 0$ for $\Re z\neq 0$, \eqref{eq:kappapq_Im} identifies the leading order of $\kappa_p(\Im X)$ in the global regime away from the imaginary axis. We also note that there is no analogue of \eqref{eq:main_p0} for $\Im X$ due to the fact that $\Im X$ is not an analytic function of $z$.
\end{remark}

\section{Proof strategy}\label{sec:strategy}

In this section we present the key ingredients of the proofs of our main results. Using these inputs, we then prove the statements \eqref{eq:kappapq_new}, \eqref{eq:main_pq} and \eqref{eq:main_p0} of Theorems~\ref{theo:main} and~\ref{theo:main1} at the end of Section~\ref{sec:strategy_Wigner}. Meanwhile, the proofs of the ingredients are postponed to later sections. Since \eqref{eq:kappa22_real} from Theorem~\ref{theo:main} allows for a more simplified argument, we omit its discussion here and defer it to Appendix~\ref{app:kappa22_real}.

The proof strategies of \eqref{eq:kappapq_new} and \eqref{eq:main_p0} are closely related, the more restrictive setting of \eqref{eq:main_p0} merely allows for sharper estimates. Though the proof of \eqref{eq:main_pq} involves additional new ideas, all three statements follow the same overall scheme consisting of three steps, with the principal differences arising in the second step. The first step concerns the global regime, where $\eta:=|\Im z|\sim 1$. Specifically, we prove Theorems~\ref{theo:main} and~\ref{theo:main1} for $z\in\mathcal{D}_g$ with
\begin{equation}
\mathcal{D}_g=\mathcal{D}_g(\varepsilon,C):=\{z\in\C\,:\, \Im z\ge \varepsilon,\, |z|\le C\},
\label{eq:def_D_global}
\end{equation} 
for any fixed $\varepsilon,C>0$. Note that in the global regime there is no need to restrict $\Re z$ to the bulk spectrum. In the second step we extend these results down to the real axis inside the bulk at the cost of adding a Gaussian component to the random matrix. Finally, the third step removes this Gaussian component via a Green function comparison theorem (GFT) based on multiple moment matching. We discuss these steps in detail in Sections~\ref{sec:strategy_Gauss}, \ref{sec:strategy_Gaus_div} and~\ref{sec:strategy_Wigner} below.

Arguments both in Sections~\ref{sec:strategy_Gauss} and~\ref{sec:strategy_Gaus_div} rely on hierarchies of identities for cumulants. Though these hierarchies are obtained in different ways (via static and dynamic expansions, respectively), to make them closed one needs to extend the family of cumulants in the lhs. of \eqref{eq:kappapq_new} by including traces of powers of $G$ into consideration.  To this end, define
\begin{equation}\label{def:mk}
m_k=m_k(z):=\frac{1}{(k-1)!}\partial_z^{k-1} m(z),\quad k\in\N,
\end{equation}
where $m(z)$ is given by \eqref{eq:def_m}. In particular, $m_1=m$. Throughout the paper we frequently use that
\begin{equation}
|m_k(z)|\lesssim 1\quad\text{uniformly in}\,\, z\in\mathcal{D}\cup\mathcal{D}_g, 
\label{eq:m_bound}
\end{equation}
for any fixed $k\in\N$, since $m(z)$ has bounded derivatives in the bulk regime. Let $I\in\C^{N\times N}$ be the identity matrix. We call $m_k= m_k\cdot I$ the \emph{deterministic approximation} to $G^k$, as the following averaged and isotropic local laws from \cite[Theorem~2.5]{Multi_res_llaws} hold 
\begin{align}
\left\vert\langle G^k(z)-m_k(z)\rangle\right\vert & \prec\frac{1}{N\eta^k},\label{eq:kG_av}\\
\left\vert\left\langle \bm{x},\big( G^k(z)-m_k(z)\big)\bm{y}\right\rangle\right\vert &\prec \frac{1}{\sqrt{N\eta}\eta^{k-1}},\label{eq:kG_iso}
\end{align}
for any fixed $k\in \N$, uniformly in deterministic vectors $\bm{x},\bm{y}\in\C^N$ with $\|\bm{x}\|=\|\bm{y}\|=1$, and in $z\in\mathcal{D}\cup\mathcal{D}_g$. For $n\in\N$ and a vector $\bm{k}=(k_1,\ldots,k_n)\in\N^n$, denote
\begin{equation}
\Delta(\bm{k}):=\prod_{i=1}^n \mathrm{Tr}\left[ G^{k_i}-m_{k_i}\right],\quad \Delta(\emptyset):=1,\quad \mathfrak{n}(\bm{k}):=n,\quad |\bm{k}|:=k_1+\ldots+k_n.
\label{eq:def_Delta}
\end{equation}
We call $\bm{k}$ a \emph{multi-index}, infusing this name with the meaning that all entries of $\bm{k}$ are positive integers. By \eqref{eq:kG_av} the following \emph{size bound} holds: 
\begin{equation}
\eta^{|\bm{k}|} \left\vert \Delta(\bm{k})\right\vert \prec 1,
\label{eq:Delta_a_priori}
\end{equation}
for any fixed multi-index $\bm{k}$.

We extend the family of the cumulants in the rhs. of~\eqref{eq:kappapq_new} by considering
\begin{equation}
\kappa\left(\left\lbrace \eta^{|\bm{k}_j|+|\bm{l}_j|}\Delta(\bm{k}_j)\overline{\Delta(\bm{l}_j)}\right\rbrace_{j=1}^r\right),
\label{eq:general_av_kappa}
\end{equation}
where $\bm{k}_j,\bm{l}_j$ are (possibly empty) multi-indices for $j\in[r]$. Here the multivariate cumulant is defined by
\begin{equation}
\kappa\left(\lbrace X_j\rbrace_{j=1}^r\right)=\kappa\left(X_1,\ldots,X_r\right):=\partial_{x_1}\cdots\partial_{x_r} \log \E \exp\left\lbrace x_1X_1+\ldots + x_rX_r\right\rbrace \Big|_{x_1=\ldots=x_r=0},
\label{eq:def_cum}
\end{equation}
for complex-valued random variables $X_1,\ldots,X_r$ defined on the same probability space, see also Footnote~\ref{ftn:kappa}. We recover $\kappa_{p,q}\left(\eta\Delta(1)\right)$ from \eqref{eq:general_av_kappa} in the special case
\begin{equation}
r=p+q,\qquad \bm{k}_j=\begin{cases}
1,& j\le p\\
\emptyset,& j\ge p+1
\end{cases},\qquad
\bm{l}_j=\begin{cases}
\emptyset,& j\le p,\\
1,& j\ge p+1.
\end{cases} 
\end{equation}

\subsection{Static step}\label{sec:strategy_Gauss} Throughout this section the spectral parameter $z\in \C$ remains fixed, i.e. we follow the static approach. First, in Section~\ref{sec:global} we prove Theorem~\ref{theo:main}(i) in the global regime that we restate here explicitly as follows:

\begin{proposition}[Global regime]\label{prop:global} Let $W$ be an $N\times N$ Wigner matrix satisfying Assumption~\ref{ass:momass}. Fix a (small) $\varepsilon>0$ and a (large) $C>0$. Then \eqref{eq:kappapq_new} holds uniformly in $z\in\mathcal{D}_g(\varepsilon,C)$, where $\mathcal{D}_g$ is defined in~\eqref{eq:def_D_global}.
\end{proposition}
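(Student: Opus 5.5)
In the global regime $\eta\sim1$ the statement reduces to
\[
\kappa_{p,q}(X)=N^{-(p+q-2)/2}\big(K_{p,q}\kappa^{\dif}_{p+q}+\mathcal{O}(N^{\xi-1/2})\big),
\]
since both error terms are $\mathcal{O}(N^{\xi-1}N^{-(p+q-3)/2})$ there. My plan is a static cumulant expansion (Stein/He--Knowles type) applied directly to the extended family \eqref{general_av_kappa}. I will prove a stronger statement by induction on the order $r$ of the joint cumulant: for multi-indices $\bm{k}_j,\bm{l}_j$ with each factor nonempty, the cumulant $\kappa(\{\Delta(\bm{k}_j)\overline{\Delta(\bm{l}_j)}\}_{j=1}^r)$ equals $N^{-(r-2)/2}$ times an explicit leading constant plus $\mathcal{O}_\prec(N^{-(r-1)/2})$. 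Non-trivial leading constants should only come from products of single traces (as the diagonal cumulant term shows).

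\textbf{Step 1: the basic identity.} Start from $G=m\cdot I$ plus fluctuation, via $(W-z)G=I$ together with $-1/m=z+m$:
\[
\langle G-m\rangle=-\frac{m}{1-m^2}\langle \underline{WG}\rangle+\text{(quadratic terms)},
\]
where $\underline{WG}$ is the self-renormalised product. For $G^k$, use $\partial_z$ on this identity, or the analogous identity $G^k=\frac{1}{k-1}\partial_z G^{k-1}$. Then write a joint cumulant $\kappa(Y_1,\dots,Y_r)$ as $\E[Y_1\,\cdot]$ with the remaining factors in cumulant-centered (Wick/Leonov--Shiryaev) form. Expand $\E[w_{ab}F]=\sum_{n}\frac{\kappa_{n+1}(w_{ab})}{n!}\E\partial_{ab}^nF$ (with the $\beta$-dependent index pairs), truncating at high order using the local laws \eqref{eq:kG_av}, \eqref{eq:kG_iso}.

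\textbf{Step 2: sorting the terms.} The second-order (Gaussian) terms act on the other factors. This produces (a) the self-energy term, which renormalises $1-m^2$; (b) terms where $\partial_{ab}$ hits another factor $Y_j$. The latter reduce the cumulant order by one and multiply by $N^{-1}$ times traces of higher powers $G^{k}$, so they stay in the extended family and are handled by induction (this is why powers $G^k$ were included). Quadratic fluctuation products are handled by cumulant-to-moment conversion: they split into lower cumulants, which are smaller by the size bound \eqref{eq:Delta_a_priori} at $\eta\sim1$.

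Higher cumulant terms $\kappa_{n+1}$, $n\ge2$, carry $N^{-(n+1)/2}$ times sums over $a,b$. Off-diagonal sums involve isotropic entries $G_{ab}$, $a\ne b$, whose smallness $N^{-1/2}$ makes them subleading. The diagonal terms $a=b$ instead give $N\cdot N^{-(n+1)/2}\kappa^{\dif}_{n+1}\prod(G_{aa}\text{-derivatives})$. Choosing $n+1=r$ and letting each derivative hit a distinct factor $Y_j$, with $G_{aa}\approx m$ and $\partial_{aa}\mathrm{Tr}\,G=-(G^2)_{aa}\approx -m_2=-m^2/(1-m^2)$, produces exactly the leading term $N^{-(r-2)/2}\kappa_r^{\dif}\prod_j(-m_j^2/(1-m_j^2))$. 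For $\bar{z}$ factors the product involves $\overline{m}$ instead.

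\textbf{Step 3: power counting, the main obstacle.} I need to show every other term is $\mathcal{O}_\prec(N^{-(r-1)/2})$. The difficulty is that naive bounds via \eqref{eq:Delta_a_priori} lose the cumulant structure: moment-type quantities are $\mathcal{O}(1)$, not $N^{-(r-2)/2}$. My first attempt will be to keep every intermediate expression as a joint cumulant of traces, isotropic entries and $\Delta$'s. I would then prove a general counting lemma: each connection between factors, via a derivative or a $\kappa_{n}$ vertex, costs $N^{-1/2}$ or better, and a connected cumulant of $r$ factors requires $r-1$ connections. This yields the bound $N^{-(r-2)/2}$ for the leading structure, gains $N^{-1/2}$ whenever an off-diagonal isotropic entry or a non-extremal term appears, and closes the induction on $r$ together with the total degree $\sum|\bm{k}_j|+|\bm{l}_j|$.
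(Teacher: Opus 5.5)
Your skeleton matches the paper's: second-order renormalization, cumulant expansion inside the cumulant, and the leading term from the diagonal $\kappa^{\dif}_{p+q}$ vertex with one $\partial_{aa}$ on each remaining factor, using $G_{aa}\approx m$ and $\partial_{aa}\Delta(1)\approx -m_2$. That computation is exactly the paper's final step, once a priori cumulant bounds are available.

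\textbf{Gap 1: the inductive statement in Step~3 is false.} Step~3 is where essentially all the work lies. When arguments are products of traces, the cumulant is not of size $N^{-(r-2)/2}$. With $X=\Delta(1)$, the Leonov--Shiryaev formula gives $\kappa(X\overline{X},X,\overline{X})=\kappa(X,\overline{X})^2+|\kappa(X,X)|^2+\mathcal{O}(N^{-1/2+\xi})$, which is of order one at $\eta\sim1$. The correct size depends on the total number $n$ of traces, roughly $N^{-(r-2)/2}N^{(n-r)/2}$. Product arguments cannot be avoided, since the expansion itself generates them, for instance the term $N^{-1}\Delta(l,k+1-l)$ coming from $\langle G-m\rangle G$.

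Your treatment of these quadratic terms also fails. Converting to moments and using the size bound destroys all cumulant decay. The Leonov--Shiryaev splitting of $\kappa(AB,Y_2,\dots,Y_r)$ instead produces the higher-order cumulant $\kappa(A,B,Y_2,\dots,Y_r)$, which an upward induction on $r$ does not control.

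Similarly, after the cumulant expansion the first argument becomes $\partial_{\bm\alpha_1}(G^k)_{ba}\cdots$, a product of resolvent entries. Inserting $|G_{ab}|\prec N^{-1/2}$ inside a cumulant only gives a size bound and loses the decay carried by the other $r-1$ arguments. This is why the paper enlarges the family: the first argument may contain isotropic factors $N^{\mathrm{od}(\bm\phi)/2}\prod_i(G^{k_i})_{\phi_i}$, whose normalization records the off-diagonal gain structurally. The paper then has to expand these factors again, separately for an off-diagonal and a diagonal $\phi_1$, to show that the family closes.

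\textbf{Gap 2: an induction on $r$ and total degree cannot close.} Expanding an order-$p$ cumulant returns order-$p$ cumulants with more traces or more resolvents. Some, such as $N^{-1}\Psi^{(p)}(n+1,S+1)$ and $N^{-1/2}\Psi^{(p)}(n,S+2L)$, are controlled only through their small prefactor. Another, $\Psi^{(p)}(n-1,S+2L)$, has growing degree. With only the a priori bound $N^{\xi}$, the first two give $N^{-1/2+\xi}$, far from $N^{-(p-2)/2}$ for large $p$.

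The paper closes this with master inequalities for the enlarged family and a bootstrap in an auxiliary precision parameter $d$. It proves $\Psi^{(p)}(n,S)\lesssim N^{\xi}(N^{-d/2}+N^{(n-p)/2}N^{-(p-2)/2})$ by lexicographic induction on $(p,d,n,S)$. The induction runs over a finite index set whose admissible $S$-range grows along the order, so all constants stay $N$-independent.

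\textbf{Further missing pieces.} The terms where derivatives hit every argument, so that the cumulant collapses to an expectation, must be bounded directly by the local laws. This includes an isotropic resummation $\sum_c\kappa_{ac}(G^k)_{cb}=(G^k)_{\bm v b}$ in the borderline case $\ell=3$.

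Your counting heuristic is also miscalibrated. A Gaussian contraction between two traces costs $N^{-1}$, not $N^{-1/2}$. The size $N^{-(r-2)/2}$ comes from a single diagonal $\kappa_r$ vertex ($N\cdot N^{-r/2}$), so $r-1$ connections at $N^{-1/2}$ each does not reproduce it.
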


Note that for $z\in\mathcal{D}_g$, Theorem~\ref{theo:main1} is the special case of Theorem~\ref{theo:main}(i) and does not require a separate proof. Next, in Section~\ref{sec:Gauss} we independently prove the following result for GUE, which is the key ingredient in the proof of~\eqref{eq:main_pq}.

\begin{proposition}\label{prop:E_Gauss} Let $W$ be an $N\times N$ GUE matrix. For any $n_1,n_2\in\N\cup\{0\}$ and multi-indices $\bm{k}\in\N^{n_1},\bm{l}\in\N^{n_2}$ it holds that    
\begin{equation}
\E(\bm{k},\bm{l}):=\E \left[\eta^{|\bm{k}|+|\bm{l}|}\Delta(\bm{k}) \overline{\Delta(\bm{l})}\right] = \Phi_0(\bm{k},\bm{l}) + \mathcal{O}(\eta),
\label{eq:E_Gauss}
\end{equation}
uniformly in $z\in \mathcal{D}$. Here $\Phi_0$ is an explicit function of $\bm{k}$ and $\bm{l}$ given later in \eqref{eq:Phi_0_expl} which does not depend on $N$ and $z$. Similarly, the implicit constant in $\mathcal{O}(\eta)$ does not depend on $N$.
\end{proposition}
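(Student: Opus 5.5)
We propose to prove Proposition~\ref{prop:E_Gauss} by Gaussian integration by parts (Stein's lemma) for GUE, which produces a closed static hierarchy for the expectations $\E(\bm{k},\bm{l})$. We then solve the hierarchy inductively in the total size $|\bm{k}|+|\bm{l}|$ and the number of factors $\mathfrak{n}(\bm{k})+\mathfrak{n}(\bm{l})$.

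The starting point is the identity $G = -\tfrac1z(I - WG)$. Writing $\Tr[G^{k}-m_k]$ with $G^k=\frac{1}{(k-1)!}\partial_z^{k-1}G$, we may reduce everything to $k=1$ plus $z$-derivatives. For the first factor $\Tr(G-m)$ we use $\E[w_{ab}F]=\frac1N\E[\partial_{w_{ba}}F]$, which holds exactly for GUE. This gives the usual self-consistent equation
\[
(-z-2m)\,\E\big[\Tr(G-m)\,F\big]=\E\big[\tfrac1N (\Tr(G-m))^2F\big]+\tfrac1N\E\big[\Tr\,\partial_W F\cdots\big].
\]
Here $F$ is the product of the remaining factors, and $-z-2m=(1-m^2)/m$ is bounded away from zero in the bulk. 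The derivative hitting $F$ replaces a factor $\Tr(G^{k_i}-m_{k_i})$ by $\frac{k_i}{N}\Tr G^{k_i+1}\cdot G\cdots$, i.e.\ by $\frac1N\Tr G^{k_i+2}$-type terms. Conjugate factors produce $\frac1N\Tr G\,G^{*\,l}$-type mixed terms, which we rewrite by the resolvent identity $G(z)G(\bar z)=\frac{G-G^*}{2\ii\eta}$. This identity is exactly what generates the $\eta^{-1}$ singularities that the weights $\eta^{|\bm{k}|+|\bm{l}|}$ compensate. Differentiating in $z$ gives the analogous equations for higher $k_1$.

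After multiplying by the weights, every term is again of the form $\E(\bm{k}',\bm{l}')$ times explicit coefficients. The coefficients are of two kinds. The first are $\frac{1}{N\eta}$-small corrections; the quadratic term $\frac1N\Delta^2$ is of this kind by the size bound \eqref{eq:Delta_a_priori}. The second are $\mathcal{O}(1)$ terms that come from the factor $\frac{1}{N\eta}\cdot\frac{1}{\eta}$ in the mixed $G\,G^*$ products, combined with $m-\bar m\approx 2\ii\Im m$ and $m_k\approx$ its value at $\Im z=0$, up to $\mathcal{O}(\eta)$.

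The key structural claim is an exact cancellation, valid because the GUE is Gaussian and $\beta=2$: the terms of order $(N\eta)^{-j}$ that do not reduce the number of $\Delta$-factors cancel exactly. They would contain $\E\Tr(G-m)$, which for GUE has no $1/N$ correction, and $\E\,\Tr G\,G^T$-type terms, which are absent for $\beta=2$. Consequently each step either removes two factors, pairing $\Delta(k_i)$ with $\overline{\Delta(l_j)}$ with an explicit $N$- and $z$-independent coefficient modulo $\mathcal{O}(\eta)$, or produces an error of size $\mathcal{O}(\eta)$. Iterating this gives a Wick-type formula: $\Phi_0(\bm{k},\bm{l})$ is a sum over pairings of holomorphic with antiholomorphic factors, and each pair $(k,l)$ contributes a universal constant. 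The constant comes from the limiting covariance $\lim \eta^{k+l}\,\mathrm{Cov}(\Tr G^k,\overline{\Tr G^l})$, which we compute explicitly from the $n_1=n_2=1$ case by differentiating $\frac{1}{\beta}\,\partial_{z_1}\partial_{z_2}\log\frac{1-m_1\bar m_2}{\cdots}$ and expanding at $\eta\to0$. Pairings within the same side give $\mathcal{O}(\eta)$, because $\eta^{k+l}\mathrm{Cov}(\Tr G^k,\Tr G^l)$ is analytic and bounded by $\eta^{k+l}\eta^{-(k+l)+1}$.

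The main obstacle is showing that the error is $\mathcal{O}(\eta)$ uniformly in $N$, and not merely $\mathcal{O}(\eta + N^\xi/(N\eta))$. The $1/(N\eta)$ terms must cancel exactly, and this cannot be shown by local-law bounds alone. We would first try to establish the cancellation at the level of the full hierarchy for GUE, using that the GUE one-point function $\E\langle G\rangle-m$ is $\mathcal{O}(N^{-2}\eta^{-\cdots})$ and oscillatory rather than $1/(N\eta)$. If this fails, a fallback is to use exact determinantal (Christoffel–Darboux) kernel formulas for GUE correlation functions. We would then write $\E\Delta(\bm{k})\overline{\Delta(\bm{l})}$ through cumulants of linear statistics of a determinantal process, expand via the sine kernel at scale $\eta$, and bound the corrections by $\mathcal{O}(\eta)$ using the known uniform $\mathcal{O}(1/N)$-relative errors of the kernel in the bulk.
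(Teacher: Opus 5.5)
Your set-up matches the paper's. Gaussian integration by parts (the renormalization $\underline{WG}$), the resolvent identity for mixed products, the fact that $\Tr(G-G^*)\approx 2\ii N\Im m$ generates the $\mathcal{O}(1)$ pairing terms, and the Wick-type form of $\Phi_0$ (a sum over bijections between holomorphic and antiholomorphic factors) are all correct.

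The gap is the step you yourself flag as the main obstacle, and the mechanism you propose for it is wrong. Besides the pairing terms and the terms with an explicit power of $\eta$, the hierarchy contains three kinds of $1/(N\eta)$-terms:
\begin{itemize}
\item the quadratic term $\frac1N\Delta(u)\Delta(k_1+1-u)$, which has one \emph{more} trace;
\item the fluctuation parts $\Delta(u)$, $\overline{\Delta(v)}$ of the mixed traces $\Tr[G^{k_1}(G^*)^{l_j+1}]$ after the resolvent identity, which have one \emph{fewer} trace;
\item the terms $\frac{k_i}{N}\Delta(k_1+k_i+1)$, which also have one \emph{fewer} trace.
\end{itemize}
None of them contains $\E\Tr(G-m)$, none removes two factors, and none is $\mathcal{O}(\eta)$. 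Each is only $\mathcal{O}(N^{\xi}/(N\eta))=\mathcal{O}(N^{-\epsilon+\xi})\gg\eta$ when $\eta\sim N^{-1+\epsilon}$. So the dichotomy ``each step either pairs two factors or is $\mathcal{O}(\eta)$'' is false, and iterating your hierarchy only gives $\Phi_0+\mathcal{O}(\eta+N^\xi/(N\eta))$.

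The one-point heuristic cannot be the mechanism either. $\E[\eta^k\Delta(k)]=\mathcal{O}(\eta)$ holds for GOE as well, yet for GOE $\E[\eta^2|\Delta(1)|^2]=\frac12+\frac{\ii\ma_0}{2}\frac{1}{N\eta}+\ldots$ has a nonzero first correction. The $\beta=1$-only term $\frac mN G^2$ (your $GG^{\mathfrak t}$ term) must indeed be absent. Its absence, however, does not make the remaining $1/(N\eta)$-terms vanish individually.

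What is missing is the following.
\begin{enumerate}
\item Prove $|\E(\bm{k},\bm{l})|\lesssim1$ without the $N^\xi$ of the local law, by induction on $|\bm{k}|+|\bm{l}|$, discarding the $1/(N\eta)$-terms. Otherwise even your ``$\mathcal{O}(\eta)$'' terms are only $\mathcal{O}(\eta N^\xi)$.
\item Convert the hierarchy into a full expansion $\E(\bm{k},\bm{l})=\sum_{d<D}(N\eta)^{-d}\Phi_d(\bm{k},\bm{l})+\mathcal{O}((N\eta)^{-D}+\eta)$. The coefficients should be $N$-independent and satisfy an explicit recursion, obtained by replacing $m$, $m_k$, $\ma$ by their values at $\Im z=0$. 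Since $\eta\ge N^{-1+\epsilon}$, taking $D>1/\epsilon$ makes the truncation error at most $\eta$.
\item Verify algebraically that $\Phi_1\equiv0$. Then $\Phi_d\equiv0$ for all $d\ge1$ by induction, because $\Phi_d$ enters its own recursion only through the pairing terms.
\end{enumerate}

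For the last step: with $\ma_0\Im m_0=\ii/2$ the pairing constants are $\frac{\ii^{k-l}}{2^{k+l}}\frac{(k+l-1)!}{(k-1)!(l-1)!}$. Since $\Phi_0=0$ unless $n_1=n_2$, only the cases $n_1=n_2\mp1$ need checking, and there the cancellation is \emph{between} terms of different trace count:
\begin{itemize}
\item For $n_1=n_2-1$, the quadratic term cancels the $\Delta(u)$-parts of the mixed terms; they carry the same powers of $\ii$ and $2$ with opposite signs.
\item For $n_1=n_2+1$, the $\frac{k_i}{N}\Delta(k_1+k_i+1)$ terms cancel the $\overline{\Delta(v)}$-parts via $\sum_{u=0}^{l}\binom{l+k_1-1-u}{k_1-1}\binom{k_i+u}{k_i}=\binom{k_1+k_i+l}{l}$.
\end{itemize}

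Your determinantal fallback is a genuinely different route. As sketched, though, it faces the same task: showing that every $(N\eta)^{-k}$ correction of the multi-point quantities vanishes exactly. Controlling the $1/N$ errors of the kernel does not by itself provide that.
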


The precise value of $\Phi_0$ is used only twice in this paper: internally in the proof of Proposition~\ref{prop:E_Gauss} and later in the proof of~\eqref{eq:main_pq} for $p+q=3$. We stress that~\eqref{eq:E_Gauss} does not hold for a GOE matrix $W$ if $\eta\sim N^{-1+\epsilon}$ for a sufficiently small fixed $\epsilon>0$. In fact, typically in this case the correction to the order one $\Phi_0$ term (which slightly differs in the real case) in the rhs. of \eqref{eq:E_Gauss}  is of order $(N\eta)^{-1}\sim N^{-\epsilon}$. For more details see Appendix~\ref{app:kappa22_real}.

While Proposition~\ref{prop:E_Gauss} concerns not only the global regime $z\in\mathcal{D}_g$, but directly deals with any $z\in\mathcal{D}$, it shares the static proof methods with Proposition~\ref{prop:global} and thus is stated in this section. Since we prove Propositions~\ref{prop:global} and~\ref{prop:E_Gauss} independently and the proof of the latter one is technically simpler, here we only present the proof ideas only behind Proposition~\ref{prop:E_Gauss}. 

Our main tool is the \emph{second order (Gaussian) renormalization} originally introduced in \cite[Eq.(4.2)]{ETH_Wigner} (for earlier works, which did not operate with this formalism but followed a similar approach\footnote{We warn the reader that in \cite{Knowles20} the underline notation is used for a different purpose, to denote the centering of a random variable. While no special notation was introduced for  the second order renormalization, the idea
is present in the last term in the rhs. of \cite[Eq.(2.5)]{Knowles20}.} see e.g. \cite[Eq.(2.5)]{Knowles20}). For any differentiable function~$g(W)$ we denote
\begin{equation}
\underline{Wg(W)}:=Wg(W) - \widetilde{\E}\left[ \widetilde{W}\left(\partial_{\widetilde{W}}g\right)(W)\right],
\label{eq:def_under_1}
\end{equation} 
where $\widetilde{W}$ is an independent copy of $W$, the expectation $\widetilde{\E}$ is taken with respect to $\widetilde{W}$ and $\partial_{\widetilde{W}}$ is the derivative in the direction of $\widetilde{W}$. When the entries of $W$ are Gaussian, we have $\E \underline{Wg(W)}=0$.

Consider $\bm{k}=1$ and $\bm{l}=(1,1)$, which is the simplest case demonstrating the proof of Proposition~\ref{prop:E_Gauss}. We have from \eqref{eq:E_Gauss} that
\begin{equation}
\E \left[ \eta^3 \Delta(1)\overline{\Delta(1)}^2\right] = \Phi_0(1,(1,1)) +\mathcal{O}(\eta)=\mathcal{O}(\eta),
\label{eq:cancel_ex0}
\end{equation}
using, additionally, that $\Phi_0(1,(1,1))=0$ by \eqref{eq:Phi_0_expl} below. To establish \eqref{eq:cancel_ex0}, we use the basic identity
\begin{equation}
G-m=\frac{1}{N}m\Tr[G-m]G - m\underline{WG},
\label{eq:underline_basic_id}
\end{equation}
and obtain that
\begin{equation}
\E \left[ \eta^3 \Delta(1)\overline{\Delta(1)}^2\right] =\frac{m}{1-m^2}\frac{1}{N\eta}\left( \E \left[ \eta^4 |\Delta(1)|^4\right] + 2\E \left[ \eta^4\Tr\left[G(G^*)^2\right] \overline{\Delta(1)}\right]\right).
\label{eq:cancel_ex}
\end{equation}
Each of the two terms in the rhs. of~\eqref{eq:cancel_ex} (without the prefactor $(N\eta)^{-1}$) has an upper bound of order $N^\xi$ by the averaged local law~\eqref{eq:kG_iso}. In fact, this overestimates the size of these terms only by factor $N^\xi$, as it a posteriori follows from Proposition~\ref{prop:E_Gauss} and the explicit calculation of the corresponding $\Phi_0$ terms. Thus, naively we get an upper bound of order $(N\eta)^{-1}N^\xi$ on the lhs. of \eqref{eq:cancel_ex0} instead of $\eta$. This does not suffice for the proof of \eqref{eq:cancel_ex0} in the regime $\eta\sim N^{-1+\epsilon}$, so we need to identify a major cancellation between the two terms in \eqref{eq:cancel_ex}. Expanding iteratively the terms in the rhs. of \eqref{eq:cancel_ex} using \eqref{eq:underline_basic_id}, we further identify cancellations for all orders of $(N\eta)^{-1}$, for more details see Section~\ref{sec:Gauss}. We call this iterative expansion procedure the \emph{static chaos expansion}, borrowing the terminology from \cite{hyperuniformity}, where a similar expansion was used for Hermitization of an i.i.d. matrix.

The proof of Proposition~\ref{prop:global} also relies on the static chaos expansion, but for cumulants instead of expectations. It is designed to preserve the cumulant structure, and thus automatically captures cancellations between different terms in the cumulant-moment relations (see e.g.~\eqref{eq:cumulant_moment} later). Instead, the difficulty arises from the analysis of the expectation of fully underlined terms, which cannot be neglected for general Wigner matrices and thus are included into the hierarchy and are subjected to further expansions. For more details see Section~\ref{sec:global}.  

\subsection{Dynamic step}\label{sec:strategy_Gaus_div}

In the arguments presented in this section the spectral parameter is no longer fixed, but it is flowing towards the real axis. This approach allows us to prove \eqref{eq:kappapq_new}, \eqref{eq:main_pq} and~\eqref{eq:main_p0} for $W$ with a Gaussian component uniformly in $z\in\mathcal{D}$. Specifically, we consider $W$ of the form
\begin{equation}
W=\sqrt{1-\alpha} W_0 + \sqrt{\alpha} W_G,
\label{eq:G_component_model}
\end{equation}
where $\alpha\in(0,1)$ does not depend on $N$, $W_0$ is an $N\times N$ Wigner matrix satisfying Assumption~\ref{ass:momass}, and $W_G$ is an independent GOE/GUE matrix of the same symmetry type. We add the Gaussian component in~\eqref{eq:G_component_model} dynamically by embedding $W_0$ into the matrix-valued Ornstein-Uhlenbeck flow:
\begin{equation}
\dif W_t =-\frac{1}{2}W_t +\frac{1}{\sqrt{N}}\dif B_t,\quad t\ge 0,
\label{eq:OU_flow}
\end{equation}
Here $B_t$ is a real symmetric ($\beta=1$) or complex Hermitian ($\beta=2$) matrix-valued Brownian motion with entries having variance equal to $t$ times those of GOE/GUE. For any $t\ge 0$ we have 
\begin{equation}
W_t\stackrel{d}{=} \ee^{-t/2} W_0 +\sqrt{1-\ee^{-t}} W_G.
\end{equation}
In particular, $W_T\stackrel{d}{=}W$ for $T:=|\log(1-\alpha)|$.

We further fix $z\in \mathcal{D}$ and consider the evolution of the spectral parameter along the \emph{characteristic flow}
\begin{equation}
\frac{\dif}{\dif t} z_t = -\frac{1}{2} z_t -m(z_t),\quad t\in[0,T],\,\, \,z_T:=z.
\label{eq:char_flow}
\end{equation}
Note that $z$ is chosen to be the target of this flow at the terminal time $T$,  not as the initial condition. Denote
\begin{equation}
G_t:=(W_t-z_t)^{-1},\quad m_{k,t}:=m_k(z_t)\,\,\text{for}\,\, k\in\N,\quad \eta_t:=|\Im z_t|.
\end{equation}
An important property of \eqref{eq:char_flow} is that $\eta_t$ monotonically decreases and 
\begin{equation}
\eta_t\sim \eta_T+|T-t||\Im m_T|\sim \eta_T + |T-t|,
\end{equation}
where in the last step we used that $|\Im m_T|\sim 1$ due to $z_T\in\mathcal{D}$. In particular, $\eta_0\sim 1$, i.e. $z_0$ is in the global regime. We will also use that $m_t= 
e^{t/2} m_0$ for $t\in [0,T]$. The flow \eqref{eq:char_flow} is chosen in such a way that $G_t$ remains almost unchanged under the time evolution~\cite{pastur1972spectrum, HL_rig, von2019random, Bou_extreme}.

The flows \eqref{eq:OU_flow} and~\eqref{eq:char_flow} are frequently used in tandem as a part of the zigzag strategy in the proof of local laws \cite{cipolloni2024out, Cipolloni_meso, cipolloni2023eigenstate}. Now we adopt this approach for  two new purposes. First, we use it to compute cumulants of the tracial quantities of the form $\Delta(\bm{k})\overline{\Delta(\bm{l})}$ with precision exceeding the one given by the corresponding local laws~\eqref{eq:Delta_a_priori}. This allows us to establish Theorems~\ref{theo:main}(i) and~\ref{theo:main1} for $z\in\mathcal{D}$ and $W$ with a Gaussian component, which we now state.

\begin{proposition}[Gauss divisible case]\label{prop:kappa_local} Assume the set-up \eqref{eq:OU_flow}--\eqref{eq:char_flow} and denote $X_t:=N\eta_t\langle G_t-m_t\rangle$ for $t\in [0,T]$. For any fixed $p,q\in\N\cup\{0\}$ with $p+q\ge 3$ it holds that
\begin{equation}
\kappa_{p,q}\left(X_t\right) = \frac{\eta_t^{p+q}}{N^{(p+q-2)/2}}K_{p,q}(z_t)\kappa_{p+q}^{\mathrm{d}}(t) + \frac{N^\xi}{N\eta_t} \mathcal{O}\left( \frac{1}{N^{(p+q-3)/2}}+ \frac{1}{(N\eta_t)^{p+q-3}}\right),
\label{eq:kappapq_local}
\end{equation}
for any fixed $\xi>0$, uniformly in $t\in [0,T]$ and $z=z_T\in\mathcal{D}$. Here $\kappa^\dif_{p+q}(t):=\kappa_{p+q}\left(N^{1/2} (W_t)_{11}\right)$ and $K_{p,q}$ is defined in \eqref{eq:def_K}. We further have
\begin{equation}
\kappa_{p,0}\left(N\eta_t\langle G_t-m_t\rangle\right) = \frac{\eta_t^p}{N^{(p-2)/2}}\left(K_{p,0}(z_t)\kappa_p^{\mathrm{d}}(t) + \mathcal{O}\left(\frac{1}{N^{1/2}}\right)\right),
\label{eq:main_p0_local}
\end{equation}
for any fixed $p\in\N$, $p\ge 3$.  
\end{proposition}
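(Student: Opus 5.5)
The plan is to prove Proposition~\ref{prop:kappa_local} by a dynamical (zig) argument along the coupled flows \eqref{eq:OU_flow}--\eqref{eq:char_flow}, working with the extended family of cumulants \eqref{eq:general_av_kappa} built from $\Delta_t(\bm{k})=\prod_i\Tr[G_t^{k_i}-m_{k_i,t}]$. The initial condition at $t=0$ is supplied by Proposition~\ref{prop:global}, since $\eta_0\sim1$ places $z_0$ in $\mathcal{D}_g$ and $W_0$ is a Wigner matrix. In fact we need the global statement for the whole extended family, with the analogous explicit leading term proportional to $\kappa^{\dif}$ of the appropriate order times a product of factors of the form $\partial$-derivatives of $-m^2/(1-m^2)$. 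We then propagate these formulas in $t$ down to $\eta_t\sim\eta_T\ge N^{-1+\epsilon}$.

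\emph{Dynamic hierarchy.} First apply It\^o's formula to $\Tr[G_t^k-m_{k,t}]$. The characteristic flow is chosen so that the first-order drift $\frac{\rd}{\rd t}z_t$ cancels the leading term $\langle G_t-m_t\rangle\Tr G_t^{2}$ coming from the second-order It\^o correction, up to a linear term $\frac{k}{2}\Tr[\ldots]$-type drift and the nonlinear term $\frac{1}{N}\Delta_t(1)\Delta_t(k+1)$-type product. There is also, for $\beta=1$, a term $\frac1N\Tr G_t^{k+2}$-type transpose correction (this is where $\beta$ enters but it only affects subleading $(N\eta)^{-1}$ orders here), and a martingale term $\frac{1}{\sqrt N}\sum_{ab}(\partial_{ab}\Tr G^k)\rd B_{ab}$. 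Applying It\^o to the multivariate cumulant via the cumulant--moment relation gives a closed system
\[
\rd\,\kappa(\{\eta_t^{|\cdot|}\Delta_t(\bm{k}_j)\overline{\Delta_t(\bm{l}_j)}\}) = \big(\text{linear drift}\big)\,\rd t + \sum \big(\text{quadratic terms}\big)\,\rd t ,
\]
where the linear part is explicitly solvable (the weights $\eta_t^{|\bm k|}$ and the factors $m_t=e^{t/2}m_0$ are designed to absorb it, exactly reproducing the $t$-evolution of $K_{p,q}(z_t)$ and $\kappa^\dif_{p+q}(t)=e^{-(p+q)t/2}\kappa^\dif_{p+q}(0)$). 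The quadratic variation terms split a cumulant of order $r$ into products of lower-order cumulants of total order $r+2$ via cumulant connectedness, each carrying a factor $1/N$ and two extra resolvent powers, hence $(N\eta_t^2)^{-1}$ relative gain compensated by the size bound \eqref{eq:Delta_a_priori}.

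\emph{Induction and integration.} We run an induction on the cumulant order $r$ (and on $|\bm k|+|\bm l|$ within each order). A priori every term is $\prec1$ by \eqref{eq:Delta_a_priori}; connected cumulants of order $r$ should be of size $(N\eta)^{-(r-2)}+N^{-(r-2)/2}$. Plugging the induction hypothesis into the quadratic forcing and integrating in time, we use $\int_0^t \frac{\rd s}{(N\eta_s)^{a}\eta_s}\lesssim (N\eta_t)^{-a}$ (as $\eta_s\sim\eta_t+(t-s)$) to obtain the error $\frac{N^\xi}{N\eta_t}((N\eta_t)^{-(r-3)}+N^{-(r-3)/2})$. For one-sided cumulants \eqref{eq:main_p0_local}, all quantities are analytic in $z$, so the It\^o corrections involve only $G\cdot G$ (never $G G^*$) and hence only $\Tr G^k$ rather than $\Im G$. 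Consequently no $1/\eta$ losses from Ward identities occur, and the error stays $N^{-1/2}$ relative to the main term.

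\emph{Main obstacle.} The hardest step will be controlling the non-closed contributions from the quadratic variation involving isotropic/off-diagonal products such as $\sum_{ab}(G^{k})_{ab}(G^{l})_{ba}$ with mixed $G,G^*$. These produce $\Tr G^kG^{*l}$ terms that fall outside the $\Delta$-family. For these we would expand $G^kG^{*l}$ via resolvent identity into linear combinations of $G^j$ and $G^{*j}$ with coefficients $(z-\bar z)^{-1}\sim\eta^{-1}$, which is exactly where the $(N\eta)^{-1}$ losses in \eqref{eq:kappapq_local} originate. Moreover, the diagonal (non-Gaussian) contributions from $W_0$ must be tracked so that the leading $\kappa^\dif$ mode is preserved exactly rather than only bounded; I would try to show the leading term satisfies the same linear ODE as the explicit expression and bound the difference via Gronwall.
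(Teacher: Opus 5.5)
Your overall route is the paper's: It\^o's formula for cumulants along \eqref{eq:OU_flow}--\eqref{eq:char_flow}, global input at $t=0$, a linear drift $f_t(p,q)=\frac{p+q}{2}+pm_{2,t}+q\overline{m}_{2,t}$ whose propagator maps $K_{p,q}(z_0)\kappa^\dif_{p+q}(0)$ to $K_{p,q}(z_t)\kappa^\dif_{p+q}(t)$, and a forcing $\widetilde{\mathcal F}_t(p,q)$ with a $1/N$ prefactor controlled by size bounds. No non-Gaussian terms arise along the Brownian flow, so $\kappa^\dif$ enters only through the initial datum. The gap is in the step that actually carries the proof, the size bounds for the auxiliary family.

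\textbf{The size bound is false as stated.} Your bound $(N\eta_t)^{-(r-2)}+N^{-(r-2)/2}$ for every normalized order-$r$ cumulant fails for the family \eqref{eq:general_av_kappa} you chose, where an argument may be a product of traces. For example, $\kappa\bigl(|X_t|^2,X_t,\overline{X_t}\bigr)$ contains $\kappa_{1,1}(X_t)^2\sim 1$. The correct statement (Proposition~\ref{prop:Theta_improved}) depends on the total number $n$ of traces: $\bigl(N^{(n-p)/2}N^{-(p-2)/2}+(N\eta_t)^{n-p}(N\eta_t)^{-(p-2)}\bigr)N^\xi\eta_t^{-S}$. In the one-sided case it is the $\eta$-independent bound $N^{(n-2p+2)/2+\xi}$ on unnormalized cumulants. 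This $n$-dependence is what absorbs the covariation terms whose deterministic parts carry no $1/N$ but remove two traces (the $\Xi^{(p)}(n-2,\cdot)$ and $\Xi^{(p-1)}(n-2,\cdot)$ terms in Lemma~\ref{lem:master_Xi}). Also, covariations do not split a cumulant into products. By Lemma~\ref{lem:Ito_cumulants} they merge two arguments into one, lowering the order. The drift $\frac{k}{2N}\Delta_t(u,k+2-u)$ instead keeps the order and adds one trace and two resolvents.

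\textbf{The induction does not close.} Induction on the order $r$ and then on $|\bm k|+|\bm l|$ fails because the forcing for $\Xi^{(p)}_t(n,S)$ contains $\frac1N\Xi^{(p)}_s(n+1,S+2)$. That term has the same order but more traces and larger $S$; in your split picture it is a connected cumulant of order $r+1$. Inserting the a priori bound \eqref{eq:Theta_a_priori} there gains only one factor $(N\eta_t)^{-1}$, which is not enough once $p\ge 4$. The paper closes this by bootstrapping in the number $d$ of gained factors, via lexicographic induction in $(p,d,n,S)$ as in \eqref{eq:Psi_improved_iterative} and \eqref{eq:Upsilon_iteration}. This terminates because for $n\ge 2p-2$ the a priori bound already matches the target.

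\textbf{The initial input you ask for is unnecessary.} You do not need explicit global leading terms for the whole extended family, and the paper does not provide them. Only $\kappa_{p,q}(\Delta_t(1))$ needs its leading term, which comes from Proposition~\ref{prop:global}. The rest of the family enters only through $\widetilde{\mathcal F}_t(p,q)$, so size bounds suffice. Their initial values, $\Xi^{(p)}_0(n,S)\le\Theta^{(p)}_0(n,S)\lesssim N^{(n-p)/2+\xi}N^{-(p-2)/2}$, come from Proposition~\ref{prop:global_Psi}.
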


The proof of Proposition~\ref{prop:kappa_local} is presented in Section~\ref{sec:p0}. It relies on Proposition~\ref{prop:global} as  the initial condition for $t=0$. 

Our second application of the flows \eqref{eq:OU_flow}, \eqref{eq:char_flow} contributes an essential ingredient for the proof of \eqref{eq:main_pq}. For any $n\in\N$ and a multi-index $\bm{k}\in\N^n$ introduce the time-dependent version of $\Delta(\bm{k})$ from \eqref{eq:def_Delta}:
\begin{equation}
\Delta_t(\bm{k}):=\prod_{i=1}^n \mathrm{Tr}\left[ G^{k_i}_t-m_{k_i,t}\right].
\end{equation}
Let $\E^{(W)}$ be the expectation wrt. $W_t$ solving \eqref{eq:OU_flow}, and $\E^{(G)}$ the expectation wrt. a GOE/GUE matrix of the same symmetry type. Equivalently, $\E^{(G)}$ is the expectation wrt. $W_t$ with $W_0$ chosen to be GOE/GUE matrix. We compare the expectations $\E^{(W)}$ and $\E^{(G)}$ of products of resolvent traces with precision matching the one of Proposition~\ref{prop:E_Gauss}. This is the content of the following statement.

\begin{proposition}\label{prop:E_local} Assume the set-up \eqref{eq:OU_flow}--\eqref{eq:char_flow}. The following results hold uniformly in $t\in [0,T]$ and $z=z_T\in\mathcal{D}$.

\noindent \emph{(i)} For any fixed $n_1,n_2\in\N$ and multi-indices $\bm{k}\in \N^{n_1},\bm{l}\in\N^{n_2}$ we have
\begin{equation}
\left(\E^{(W)}-\E^{(G)}\right)\left[\eta_t^{|\bm{k}|+|\bm{l}|}\Delta_t(\bm{k}) \overline{\Delta_t(\bm{l})}\right] =\mathcal{O}(\eta_t).
\label{eq:E_local}
\end{equation}

\noindent \emph{(ii)} For $\beta=2$ and any fixed $p,q\in\N\cup\{0\}$ with $p+q\ge 3$ it further holds that
\begin{equation}
\left\vert \kappa_{p,q}\left(N\eta_t\langle G_t-m_t\rangle\right)\right\vert \lesssim  \frac{1}{\sqrt{N}}\bm{1}_{p+q=3} + \frac{1}{N}.
\label{eq:main_pq_local}
\end{equation}

\end{proposition}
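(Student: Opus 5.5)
We study the time derivative of the quantity $F_t(\bm{k},\bm{l}):=\E\big[\eta_t^{|\bm{k}|+|\bm{l}|}\Delta_t(\bm{k})\overline{\Delta_t(\bm{l})}\big]$ along the coupled flows \eqref{eq:OU_flow}--\eqref{eq:char_flow}, for a general initial matrix $W_0$ and for a GUE/GOE initial matrix. The aim is to show that the difference $D_t:=(\E^{(W)}-\E^{(G)})[\cdots]$ satisfies a closed, essentially contracting linear hierarchy.

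\textbf{Step 1: the evolution equation.} By It\^o's formula, $\dif\,\Tr[G_t^k-m_{k,t}]$ splits into a martingale term and a drift. The characteristic flow is chosen exactly so that the leading first order drift terms cancel. What survives are the following:
\begin{itemize}
\item terms of the form $\frac{1}{N}\Tr[G_t^{a}]\Tr[G_t^{b}]$ minus their deterministic approximations, which are rewritten in terms of $\Delta_t$ with higher total index and an extra factor $N^{-1}$;
\item covariation terms between different trace factors, which produce $\frac{1}{N^2}\Tr[G_t^{a}(G_t^*)^{b}]$-type quantities, i.e. traces of products of $G_t$ and $G_t^*$;
\item for $\beta=1$ only, an additional term $\frac{1}{N}\Tr[G_t^{a} (G_t^{b})^{\mathfrak t}]$.
\end{itemize}
The equation contains no non-Gaussian terms, because the noise is Gaussian. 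Hence it is identical for $\E^{(W)}$ and $\E^{(G)}$, and $D_t$ obeys the same linear system with zero forcing, apart from whatever is not closed.

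The mixed traces $\Tr[G^a (G^*)^b]$ are reduced to single-$G$ traces by the resolvent identity $GG^*=(G-G^*)/(2\ii\eta)$. Each such reduction costs $\eta_t^{-1}$, which is absorbed by the matching powers of $\eta_t$ in the normalization. In this way the hierarchy closes within the family $F_t(\bm{k}',\bm{l}')$ with $|\bm{k}'|+|\bm{l}'|\le |\bm{k}|+|\bm{l}|+2$, with coefficients of size $\eta_t^{-1}$, or of size $(N\eta_t)^{-1}\eta_t^{-1}$ for the terms that raise the order.

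\textbf{Step 2: integrate with a Gr\"onwall argument.} We write the system as
\[
\partial_t D_t=-\frac{c(\bm{k},\bm{l})}{\eta_t}D_t+\text{(lower and higher order couplings)}.
\]
The diagonal coefficient comes from the factor $\eta_t^{|\bm{k}|+|\bm{l}|}$ together with $\partial_t\eta_t\sim -1$. The off-diagonal couplings that raise the order carry the small factor $(N\eta_t)^{-1}$.

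We first consider $t=0$, where $\eta_0\sim1$. There Proposition~\ref{prop:global} and the static expansion give directly that $D_0=\mathcal{O}(N^{-1/2})=\mathcal{O}(\eta_0)$, because the two ensembles differ only through their higher cumulants. We then run an induction on the total degree $|\bm{k}|+|\bm{l}|$, truncated at a large degree where the a priori size bound \eqref{eq:Delta_a_priori} suffices. Integrating the equation along the flow and using $\int_0^t \eta_s^{-1}\,\dif s\sim\log(\eta_0/\eta_t)$, the propagator behaves like $(\eta_t/\eta_s)^{c}$. This turns an initial error $\mathcal{O}(\eta_0)$ and source terms of order $\mathcal{O}(\eta_s)$ into $\mathcal{O}(\eta_t)$, provided $c\ge1$. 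The main obstacle is exactly this point: verifying that the effective exponent is large enough, and that the order-raising couplings of size $(N\eta_t)^{-1}$ do not destroy the bound $\mathcal{O}(\eta_t)$ at small $\eta_t$. For the latter, the plan is to show that these couplings appear only in the combination $D_t$ of higher degree, so that the induction closes, rather than with an absolute size $N^\xi$.

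\textbf{Step 3: part (ii).} We expand $\kappa_{p,q}(X_t)$ via the cumulant-moment relations into the moments $F_t$ with $\bm{k},\bm{l}$ consisting of ones. Part (i), combined with Proposition~\ref{prop:E_Gauss} for GUE, gives each moment as $\Phi_0+\mathcal{O}(\eta_t)$. For GUE the cumulant is known to be small, so the $\Phi_0$ contributions cancel in the cumulant combination.

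This alone only yields the error $\mathcal{O}(\eta_t)$, which is insufficient at $\eta_t\sim1$. We therefore split into two regimes. For $\eta_t\gtrsim N^{-1/2}$ we use Proposition~\ref{prop:kappa_local}, refining it to exploit $\beta=2$; this gives $N^{-(p+q-2)/2}$ plus errors. For smaller $\eta_t$, the $\mathcal{O}(\eta_t)$ bound from the GFT-type comparison is already below $N^{-1/2}$. To obtain the sharper bound $N^{-1}$ for $p+q\ge4$, we rerun the flow argument directly on the cumulants of $D_t$. There the cumulant structure kills the $\Phi_0$ terms and the leading corrections, leaving $\mathcal{O}(\eta_t/N\eta_t)$-type remainders. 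The case $p+q=3$ needs the explicit $\Phi_0$ from \eqref{eq:Phi_0_expl}, which is where the $N^{-1/2}$ term arises.
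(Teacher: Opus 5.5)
For part (i) your route is the paper's: It\^o's formula along \eqref{eq:OU_flow}--\eqref{eq:char_flow}, identical drift for both initial laws because the noise is Gaussian, the resolvent identity for $G^a(G^*)^b$, and a Gr\"onwall step. Two points are wrong as written, though both can be repaired.

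First, at $t=0$ the difference is not $\mathcal{O}(N^{-1/2})$. The fourth cumulant of the off-diagonal entries already shifts $\E\Tr G-Nm$ by an order-one amount relative to GOE/GUE. Also, Proposition~\ref{prop:global} controls cumulants of order $\ge 3$, not expectations. You only need $D_0=\mathcal{O}(1)$, but it must be free of $N^\xi$, since otherwise you end with $N^\xi\eta_t$. Getting it requires a separate static cumulant expansion (Lemma~\ref{lem:Upsilon_0}).

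Second, an induction on the degree alone does not close. Degree $S$ couples downwards, where the sharp bound is needed, and upwards to degree $S+2$ with a factor $(N\eta_t)^{-1}$. The paper instead inducts lexicographically on $(d,n,S)$ for the unnormalized bound $\eta_s^{-(S-1)}+(N\eta_s)^{-d}N^\xi\eta_s^{-S}$. The degree-raising terms are fed with the level-$(d-1)$ bound, and one takes $d>1/\epsilon$. In unnormalized variables the propagator is bounded and the gain is simply $\int_0^t\eta_s^{-k}\dif s\lesssim\eta_t^{-(k-1)}$, so your worry about the exponent $c$ never arises. (In your normalized variables the sources from the deterministic part of $\Tr[G^a(G^*)^b]$ are $\mathcal{O}(1)$, not $\mathcal{O}(\eta_s)$. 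One then needs $c>1$, which holds because these sources only occur when $c=|\bm{k}|+|\bm{l}|\ge 3$.)

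The genuine gap is in (ii). Splitting at $\eta_t\sim N^{-1/2}$ and using \eqref{eq:kappapq_local} above the threshold gives only $N^{-1/2+\xi}$ near the crossover when $p+q=3$, since the error there is $N^\xi/(N\eta_t)$. No choice of threshold removes this loss. The unspecified ``refinement exploiting $\beta=2$'' is exactly the missing argument, and once you have it the split is unnecessary.

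The paper works uniformly in $t$ with the cumulant flow \eqref{eq:kappa_dif} for $\kappa_{p,q}(\Delta_t(1))$ of the Gauss-divisible matrix itself. (Note that $D_t$, being a difference of expectations, has no cumulants.)
\begin{itemize}
\item The bounded propagator carries the initial value $\kappa_{p,q}(X_0)\lesssim N^{-1/2}\bm{1}_{p+q=3}+N^{-1}$ from Proposition~\ref{prop:global}. This global $\kappa_3^{\mathrm{d}}$-mode, not $\Phi_0$, is where the $N^{-1/2}$ comes from.
\item The forcing $\widetilde{\mathcal{F}}_t(p,q)$ is $N^{-1}$ times cumulants with $p'$ arguments containing $n'$ traces in total. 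The key input is Lemma~\ref{lem:kappa_eta}: for $\beta=2$ and $n'<2p'-2$, the normalized cumulant equals an $N$- and $z$-independent constant $\widehat{\Phi}_0$ plus $\mathcal{O}(\eta_t)$, by \eqref{eq:E_eta} and the cumulant--moment relations. Moreover $\widehat{\Phi}_0=0$, because \eqref{eq:Xi_improved} forces the cumulant to vanish at a point with $\eta_T\to 0$ and $N\eta_T\to\infty$.
\item The condition $n'<2p'-2$ holds for every forcing term exactly when $p+q\ge 4$. This gives $|\widetilde{\mathcal{F}}_t|\lesssim (N\eta_t^{p+q+1})^{-1}$, and hence the $1/N$ error after integration.
\item For $(p,q)=(2,1)$ all forcing cumulants have $n'=2p'-2$, so Lemma~\ref{lem:kappa_eta} does not apply. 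Instead, the explicit leading values from \eqref{eq:Phi_0_expl}, namely $0$, $(8\ii)^{-1}$, $(8\ii)^{-1}$ and $-(8\ii)^{-1}$ (the last with weight two), cancel in \eqref{eq:F_21}. This cancellation is what $\Phi_0$ is actually needed for.
\item The case $(3,0)$ follows from \eqref{eq:main_p0_local}.
\end{itemize}
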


In essence, \eqref{eq:E_local} is a GFT transferring \eqref{eq:E_Gauss} from GUE to the Gaussian divisible case, however it is proven by the means of \eqref{eq:OU_flow} and \eqref{eq:char_flow}. We also note that \eqref{eq:E_local} is stated both for $\beta=1$ and $\beta=2$, since it will be later used in the proof of~\eqref{eq:kappa22_real} in the real case.

We first prove Proposition~\ref{prop:E_local}(i) independently of Proposition~\ref{prop:kappa_local} in Section~\ref{sec:E_local} and then derive Proposition~\ref{prop:E_local}(ii) from Propositions~\ref{prop:E_Gauss}, \ref{prop:kappa_local} and~\ref{prop:E_local}(i) in Section~\ref{sec:kappa_local}. The proofs of  Propositions~\ref{prop:kappa_local} and~\ref{prop:E_local} follow the same approach, so we explain it in the simplest setting, which is that of Proposition~\ref{prop:E_local}(i). The main idea behind the proof of Proposition~\ref{prop:E_local}(i) is the \emph{dynamic chaos expansion}, which is the dynamic version of the static procedure used in Section~\ref{sec:Gauss}. We express $\E \Delta_t(\bm{k})\overline{\Delta_t(\bm{l})}$ in terms of a time-integral of quantities of the same type. Iterating these identities sufficiently many times, we improve the a priori local law bounds~\eqref{eq:Delta_a_priori} to \eqref{eq:E_local}. Though \eqref{eq:Delta_a_priori} is simply a size bound which does not capture the cancellation between $\E^{(W)}$ and $\E^{(G)}$, it suffices as an input for the iterative procedure. We use the dynamic approach instead of static expansions similar to~\eqref{eq:cancel_ex} in order to avoid dealing with the third and higher order terms in the cumulant expansion in the local regime $\eta\ll 1$. We also note that unlike in \eqref{eq:E_Gauss} we do not need to follow any cancellations, as we follow $\E^{(W)}-\E^{(G)}$ instead of $\E^{(W)}$, which would naturally have the same cancellations as identified along the proof of  \eqref{eq:E_Gauss} in Section~\ref{sec:Gauss}.

\subsection{GFT step: multiple moment matching}\label{sec:strategy_Wigner}

In this section we remove the Gaussian component added to $W$ in Section~\ref{sec:strategy_Gaus_div} via GFT and conclude the proof of the statements \eqref{eq:kappapq_new}, \eqref{eq:main_pq} and \eqref{eq:main_p0} of Theorems~\ref{theo:main} an~\ref{theo:main1}, postponing technical details to later sections. The GFT is based on the following multiple moment matching lemma, stating that under some mild regularity conditions, any fixed number of moments of a given random variable can be matched with those of a Gauss-divisible random variable.

\begin{lemma}[Multiple moment matching]\label{lem:moment_match} Let $\chi$ be either real or complex random variable such that $\E |\chi|^p<\infty$ for any $p\in\N$. Assume that $\chi$ has non-degenerate support in the sense of Definition~\ref{def:support}. Then for any fixed $\Gamma\in\N$ and for sufficiently small, possibly $\Gamma$-dependent $\alpha>0$, there exists a random variable $\chi_\alpha$ with compactly supported distribution, such that
\begin{equation}
\E \left(\sqrt{1-\alpha}\,\chi_\alpha +\sqrt{\alpha} \chi^G\right)^{\gamma_1}\left(\sqrt{1-\alpha}\,\overline{\chi}_\alpha +\sqrt{\alpha} \overline{\chi}^G\right)^{\gamma_2} = \E \chi^{\gamma_1}\overline{\chi}^{\gamma_2},\quad \gamma_1+\gamma_2\le \Gamma,\, \gamma_1,\gamma_2\in\N\cup\{0\}.
\label{eq:moment_match}
\end{equation}
Here $\chi^G$ is a standard Gaussian random variable (real if $\chi$ is real and complex otherwise) which is independent of~$\chi_\alpha$.
\end{lemma}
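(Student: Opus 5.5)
The plan is to reduce the statement to a truncated moment problem and solve it by a perturbation of the law of $\chi$ itself. Write $Y_\alpha:=\sqrt{1-\alpha}\,\chi_\alpha+\sqrt{\alpha}\chi^G$. Since $\chi_\alpha$ and $\chi^G$ are independent, expanding the binomials shows that \eqref{eq:moment_match} is equivalent to a triangular linear system: for $\gamma_1+\gamma_2\le\Gamma$,
\begin{equation*}
(1-\alpha)^{(\gamma_1+\gamma_2)/2}\,\E\chi_\alpha^{\gamma_1}\overline{\chi}_\alpha^{\gamma_2}=\E\chi^{\gamma_1}\overline{\chi}^{\gamma_2}-\sum_{(\delta_1,\delta_2)<(\gamma_1,\gamma_2)}c^{\alpha}_{\gamma,\delta}\,\E\chi_\alpha^{\delta_1}\overline{\chi}_\alpha^{\delta_2},
\end{equation*}
where the coefficients $c^\alpha_{\gamma,\delta}$ involve the Gaussian moments and carry at least one factor $\alpha$. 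Solving inductively in the degree $\gamma_1+\gamma_2$ determines a target vector $\bm{M}(\alpha)=(M_{\gamma_1,\gamma_2}(\alpha))_{\gamma_1+\gamma_2\le\Gamma}$ of required moments for $\chi_\alpha$. This vector depends continuously (in fact polynomially, up to the factor $(1-\alpha)^{-(\gamma_1+\gamma_2)/2}$) on $\alpha$, and $\bm{M}(0)$ is the moment vector of $\chi$. It therefore suffices to show that every vector close enough to the moment vector of $\chi$ is the moment vector of some compactly supported probability measure.

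For that, consider the moment map $\mu\mapsto\bm{M}[\mu]$ on probability measures. First, truncate: pick $R$ large and let $\mu_R$ be the law of $\chi$ conditioned on $|\chi|\le R$. By the finite-moment assumption, $\bm{M}[\mu_R]\to\bm{M}[\mu]$ as $R\to\infty$. Under the non-degenerate support assumption, $\mathrm{supp}\,\mu_R$ still contains infinitely many points (real case), respectively is not contained in any algebraic curve $\{P=0\}$ of degree $\le\Gamma$ (complex case), once $R$ is large enough. The latter holds because the zero sets of nonzero polynomials of bounded degree form a compact family after normalizing coefficients, so a limiting argument works. Next, pick finitely many points $x_1,\dots,x_L\in\mathrm{supp}\,\mu_R$ such that the evaluation vectors $v(x_j):=(x_j^{\gamma_1}\overline{x}_j^{\gamma_2})_{\gamma_1+\gamma_2\le\Gamma}$ span the whole space of dimension $D$. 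This is possible exactly because no nonzero polynomial of degree $\le\Gamma$ in $(x,\overline x)$ vanishes on the support; in the real case, $\Gamma+1$ distinct points give a Vandermonde matrix. Consider the measures $\nu_{\bm{s}}:=(1-\sum s_j)\mu_R+\sum_j s_j\delta_{x_j}$, allowing small $s_j$ of either sign. Positivity can be kept by first mixing in small fixed weights, i.e. using a base measure $\mu_R'=(1-\tau)\mu_R+\tau L^{-1}\sum\delta_{x_j}$, which is still close to $\mu$. The map $\bm{s}\mapsto\bm{M}[\nu_{\bm{s}}]$ is affine, and its image contains a neighborhood of $\bm{M}[\mu_R']$ within the affine hyperplane where the $(0,0)$-moment equals $1$, because the vectors $v(x_j)-\bm{M}[\mu_R']$ span that hyperplane. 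This spanning follows from the spanning of the $v(x_j)$ together with the normalization coordinate.

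The main obstacle is uniformity. We need a fixed radius $r>0$ (depending on $\Gamma$ and on $\chi$) such that all vectors within $r$ of $\bm{M}[\mu_R']$ are attained. We then need $\alpha$ small enough that both $|\bm{M}(\alpha)-\bm{M}[\mu]|$ and $|\bm{M}[\mu]-\bm{M}[\mu_R']|$ are below $r/2$. The ordering of choices must therefore be: fix $R$, $\tau$ and the points $x_j$ first (independently of $\alpha$), which fixes $r$ via the smallest singular value of the matrix $(v(x_j))$; only then choose $\alpha$. Smallness of $\tau$ and $R^{-1}$ relative to $r$ is delicate, because $r$ depends on $\tau$ (the weights can move by at most about $\tau/L$). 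The cleaner route is to absorb the truncation error into the same affine correction. Fix $R_0$, $\tau$ and the points; then $r\sim\tau\,\sigma_{\min}/L$ is independent of larger truncations $R\ge R_0$. Since these points stay in the support, we can take $R\to\infty$ to make the truncation error below $r/2$ before choosing $\alpha$. The resulting $\chi_\alpha\sim\nu_{\bm{s}}$ is supported in $\{|x|\le R\}$, hence compactly supported, which completes the plan.
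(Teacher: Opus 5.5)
Your reduction is fine. Solving the triangular system for the target vector $\bm{M}(\alpha)$ and noting $\bm{M}(0)=\bm{M}[\mu]$ matches the paper's first step (cf.~\eqref{eq:s_syst}, \eqref{eq:s_syst2}). So does reducing to the claim that every vector in the hyperplane $\{M_{0,0}=1\}$ sufficiently close to $\bm{M}[\mu]$ is the moment vector of a compactly supported probability measure.

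\textbf{The gap: the uniformity step does not close.} Mixing in external atoms moves the center of your affine family to $\bm{M}[\mu_R']=\bm{M}[\mu_R]+\tau\big(L^{-1}\sum_j v(x_j)-\bm{M}[\mu_R]\big)$. This is at distance of order $\tau$ from $\bm{M}[\mu_R]$, while the radius you obtain is $r\sim\tau\sigma_{\min}/L$, also linear in $\tau$. Sending $R\to\infty$ removes only the truncation error, not this shift. Shrinking $\tau$ does not help, since both quantities scale alike. So nothing ensures that $\bm{M}(\alpha)$, or even $\bm{M}[\mu]$, lies in the ball you reach.

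This is not a technicality. Once the points are chosen, your positivity mechanism never uses $x_j\in\mathrm{supp}\,\mu$, because the added atoms carry their own mass. The same reasoning would therefore apply to $\chi=\pm1$ with any spanning points $x_j\in\R$, where the conclusion is false. For $\Gamma=4$ the system forces $\E\chi_\alpha^2=1$ and $(1-\alpha)^2\E\chi_\alpha^4=1-6\alpha+3\alpha^2$. Hence $\E\chi_\alpha^4<(\E\chi_\alpha^2)^2$ for every $\alpha\in(0,1)$, which is impossible. The essential point of the lemma is that non-degenerate support places $\bm{M}[\mu]$ in the interior of the set of attainable moment vectors, and your argument does not establish it.

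\textbf{How to repair it.} Perturb using the mass that $\mu$ itself has near the $x_j$. Since $x_j\in\mathrm{supp}\,\mu$, small disjoint balls $B_j\ni x_j$ inside $\{|x|\le R_0\}$ have $\mu(B_j)>0$. Set $\mu_j:=\mu(B_j)^{-1}\mu|_{B_j}$ and $\nu_{\bm{s}}:=(1-\sum_j s_j)\mu_R+\sum_j s_j\mu_j$. Then:
\begin{itemize}
\item $\nu_{\bm{s}}$ is a probability measure for $|s_j|\le\mu(B_j)/4$ and $\sum_j|s_j|\le 1/2$;
\item the family is centered exactly at $\bm{M}[\mu_R]$, with no shift;
\item the vectors $\bm{M}[\mu_j]\approx v(x_j)$ still span.
\end{itemize}
The radius is then uniform in large $R$, and your order of choices goes through.

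Equivalently, use a supporting-hyperplane argument. If $\bm{M}[\mu]$ were not in the relative interior of the convex hull of $\{v(x):x\in\mathrm{supp}\,\mu\}$, some nonconstant real polynomial $P$ of degree $\le\Gamma$ would satisfy $P\ge 0$ on the support and $\int P\,\dif\mu=0$. That contradicts Definition~\ref{def:support}. This route even yields a finitely atomic $\chi_\alpha$.

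Either version is an elementary, unified (real and complex) substitute for what the paper does with heavier tools. In the real case the paper uses positive definiteness of the Hankel matrices plus Krein's solution of the truncated Hausdorff problem. In the complex case it uses Richter's theorem, strict positivity on $\mathrm{Pos}(K)_{\Gamma+2}$, and a truncated Riesz--Haviland type theorem.

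A minor point: $\mathrm{supp}\,\mu_R$ need not be infinite (e.g.\ if $\mathrm{supp}\,\mu=\N$). This is harmless, since you only use finitely many spanning points.
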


The proof of Lemma~\ref{lem:moment_match} is presented in Section~\ref{sec:moment_match}. Next, we show how to compare $\E \Delta(\bm{k})\overline{\Delta(\bm{l})}$ for two Wigner matrices with several matching moments.

\begin{proposition}[GFT]\label{prop:GFT} Let $W^{(1)}=\big(w^{(1)}_{ab}\big)_{a,b=1}^N$ and $W^{(2)}=\big(w^{(2)}_{ab}\big)_{a,b=1}^N$ be two Wigner matrices satisfying Assumption~\ref{ass:momass}. Fix $\Gamma\in\N$ and assume that 
\begin{equation}
\E \big(w^{(1)}_{ab}\big)^{\gamma_1}\big(w^{(1)}_{ba}\big)^{\gamma_2} = \E \big(w^{(2)}_{ab}\big)^{\gamma_1}\big(w^{(2)}_{ba}\big)^{\gamma_2},\quad \gamma_1+\gamma_2\le \Gamma,\, \gamma_1,\gamma_2\in\N\cup\{0\},
\label{eq:GFT_condition}
\end{equation}
Let $\Delta^{(j)}$ be defined as in \eqref{eq:def_Delta} but with $W$ replaced by $W^{(j)}$, $j=1,2$. Then for any fixed $n_1,n_2\in\N\cup\{0\}$ and $\bm{k}\in\N^{n_1},\bm{l}\in\N^{n_2}$ it holds that
\begin{equation}
\E \left[\eta^{|\bm{k}|+|\bm{l}|}\Delta^{(1)} (\bm{k})\overline{\Delta^{(1)}(\bm{l})}\right]=\E \left[\eta^{|\bm{k}|+|\bm{l}|}\Delta^{(2)} (\bm{k})\overline{\Delta^{(2)}(\bm{l})}\right] + \mathcal{O}\left(N^{-(\Gamma-3)/2+\xi}\right),
\label{eq:E_match}
\end{equation}
for any fixed $\xi>0$, uniformly in $z\in\mathcal{D}$.
\end{proposition}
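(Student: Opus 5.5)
The plan is a standard Lindeberg replacement argument, combined with a Taylor expansion of the observable in a single matrix entry, carried out to order $\Gamma$. Order the index pairs $\{(a,b): a\le b\}$ as $\gamma=1,\dots,\gamma_{\max}\sim N^2/2$. Let $W^{\gamma}$ be the matrix whose first $\gamma$ entries (together with their symmetric partners) come from $W^{(2)}$ and the rest from $W^{(1)}$. Set
\[
F(W):=\eta^{|\bm{k}|+|\bm{l}|}\Delta(\bm{k})\overline{\Delta(\bm{l})}.
\]
We telescope $\E F(W^{(1)})-\E F(W^{(2)})$ into $\gamma_{\max}$ differences $\E F(W^{\gamma-1})-\E F(W^{\gamma})$. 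Each of these differs only in the $(a,b)$ entry. Write $W^{\gamma-1}=V+w^{(1)}_{ab}E^{ab}+w^{(1)}_{ba}E^{ba}$ and $W^{\gamma}=V+w^{(2)}_{ab}E^{ab}+w^{(2)}_{ba}E^{ba}$, where $V$ has zero $(a,b),(b,a)$ entries and is independent of both. Expand $F$ in the variables $(w_{ab},w_{ba})$ around $V$ up to total order $\Gamma$, with a remainder of order $\Gamma+1$. By \eqref{eq:GFT_condition} and the independence of $V$, all terms of order $\le\Gamma$ cancel in expectation. Only the remainders remain, and we need each to be $O(N^{-(\Gamma+1)/2}\cdot N^{\xi}\cdot \text{(gain)})$ so that the sum over $N^2$ pairs gives $N^{-(\Gamma-3)/2+\xi}$.

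To bound the remainder, first note that derivatives of $F$ in $w_{ab}$ produce, via $\partial_{ab}G^{k}=-\sum_{j}G^{j}E^{ab}G^{k+1-j}$, products of tracial factors $\eta^{k_i}\mathrm{Tr}[G^{k_i}-m_{k_i}]$ (untouched ones, bounded by \eqref{eq:Delta_a_priori}) and terms of the form $\eta^{k}(G^{j_1}E^{ab}G^{j_2}E^{ba}\cdots)$ traced. A derivative hitting a trace turns $\eta^{k}\mathrm{Tr}G^{k}$ into $\eta^k$ times products of entries $(G^{j})_{ba}$, $(G^{j})_{aa}$, etc., with $\sum$ of powers equal to $k+1$. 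The isotropic law \eqref{eq:kG_iso} in the bulk gives $|(G^j)_{xy}|\prec \eta^{-(j-1)}$ (using $|m_j|\lesssim1$ and $N\eta\ge N^{\epsilon}$). Hence every derivative of order $s$ of $F$ is $\prec \eta^{-s}$ at worst, counting the loss of one power of $\eta$ per derivative. To avoid this loss I would use the Ward-type bound $|(G^{j})_{xy}|\prec\eta^{-(j-1)}$ more carefully: the $k_i$ powers of $\eta$ already absorb $\eta^{-(k_i-1)}$, and each extra derivative adds one resolvent factor but no extra $\eta^{-1}$ beyond those paid by $\eta^{k_i}$. So in fact each derivative of $F$ is $\prec \eta\cdot\eta^{-1}$ per affected trace, i.e. $\prec 1$, since a single trace $\eta^k\mathrm{Tr}G^k$ after $s\ge1$ derivatives is a product of $s$ entries with total power $k+s$ distributed into $s$ factors, giving $\eta^{k}\eta^{-(k+s-s)}=1$. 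The remainder is then $\prec N^{-(\Gamma+1)/2}$ times moments of the entries, which are controlled by \eqref{eq:momass}. Summing over $N^2$ pairs gives $N^{-(\Gamma-3)/2+\xi}$.

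The main technical obstacle is making the remainder estimate rigorous. It must hold for the interpolated matrix $V+\theta w_{ab}E^{ab}+\dots$, for which the local laws are not directly stated, and it must hold in expectation rather than only with high probability. I would handle the first point by a standard resolvent expansion: for $|w|\le N^{-1/2+\xi}$, the entries of $G(V+wE)$ are controlled by those of $G(W^{\gamma})$ via a finite Neumann series, because $\|G\|_{\max}\prec1$ entrywise, and the local laws hold for both $W^{\gamma-1}$ and $W^{\gamma}$ since these are Wigner matrices. For the second point I would use the deterministic bound $\|G\|\le\eta^{-1}\le N$ on the low-probability complement, combined with the high moment bounds on the entries, to absorb the bad event into an $N^{-D}$ error. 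The diagonal pairs $a=b$ are only $N$ in number and are handled identically, with an even better count.
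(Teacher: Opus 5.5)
Your proposal is correct and follows essentially the same route as the paper. Both arguments use Lindeberg replacement over the $\sim N^2/2$ entries, expand to order $\Gamma$ around the matrix with the swapped entry removed, cancel all terms of total degree $\le\Gamma$ by \eqref{eq:GFT_condition} and independence, and bound the remaining terms of order $\ge\Gamma+1$ by $N^{-(\Gamma+1)/2+\xi}$ per replacement via the local laws; your count that each differentiated normalized trace $\eta^{k}\mathrm{Tr}\,G^{k}$ stays $\prec 1$ matches the paper's bound $|T_{\upsilon,\bm d_j}|\prec N^{-|\bm d_j|/2}\eta^{-k_j}$. The only cosmetic difference is that the paper expands the resolvent $S_\upsilon$ around $R_\upsilon$, with an exact remainder $\mathcal{T}_{\upsilon,\Gamma+1}$ containing $S_\upsilon$ itself, so it needs no local laws at interpolated matrices $V+\theta wE$. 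Your Taylor remainder does need them, and you supply them by the same finite resolvent expansion the paper uses to control $R_\upsilon$.
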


The proof of Proposition~\ref{prop:GFT} is based on the Lindeberg replacement strategy and is presented in Appendix~\ref{app:GFT}. Finally, we prove the main results of this paper by combining the technical inputs listed in Section~\ref{sec:strategy}. We also remind the reader that the proof of \eqref{eq:kappa22_real} from Theorem~\ref{theo:main} is presented in Appendix~\ref{app:kappa22_real}.   

\begin{proof}[Proof of Theorems~\ref{theo:main} and~\ref{theo:main1}] We start with the proof of \eqref{eq:kappapq_new}. Fix an $N$-independent $\Gamma\ge p+q+2$. Since $W$ satisfies Assumption~\ref{ass:support}, Lemma~\ref{lem:moment_match} implies that there exists an $N$-independent $\alpha\in (0,1)$ and random variables $\chi_{\alpha, \dif}, \chi_{\alpha,{\rm od}}$ with compact support such that \eqref{eq:moment_match} holds for $\chi_\dif$ and $\chi_{\rm od}$. Denote
\begin{equation}
\widetilde{\chi}_\dif := \sqrt{1-\alpha}\chi_\dif + \sqrt{\alpha} \chi^G_\dif,\quad \widetilde{\chi}_\dif := \sqrt{1-\alpha}\chi_{\rm od} + \sqrt{\alpha} \chi^G_{\rm od},
\end{equation}
where $\chi^G_\dif$ is a real Gaussian random variable with variance $2/\beta$, and $\chi^G_{\rm od}$ is standard real Gaussian for $\beta=1$ and complex for $\beta=2$. Let $\widetilde{W}=(\widetilde{w}_{ab})_{a,b=1}^N$ be a Wigner matrix with
\begin{equation*}
\widetilde{w}_{aa}\stackrel{\rd}{=}N^{-1/2}\widetilde{\chi}_{\rm{d}},\quad \widetilde{w}_{ab}\stackrel{\rd}{=}N^{-1/2}\widetilde{\chi}_{\rm{od}},\,\, a>b.
\end{equation*}
By construction, $\widetilde{W}=\sqrt{1-\alpha} W_0+\sqrt{\alpha}W^G$, where $W_0$ is a Wigner matrix satisfying Assumption~\ref{ass:momass}, and $W^G$ is an independent GOE/GUE matrix. Proposition~\ref{prop:kappa_local} applied to $W_T=\widetilde{W}$ implies that \eqref{eq:kappapq_new} holds for~$\widetilde{W}$.

Denote the $\widetilde{W}$-counterpart of $\Delta(\bm{k})$ by $\widetilde{\Delta}(\bm{k})$. We express both cumulants $\kappa_{p,q}(\eta\Delta(1))$ and $\kappa_{p,q}(\eta\widetilde{\Delta}(1))$ in terms of the corresponding moments using the cumulant-moment relations, see \eqref{eq:cumulant_moment} below. Using \eqref{eq:Delta_a_priori} along with \eqref{eq:E_match}, we obtain
\begin{equation}
\left\vert\kappa_{p,q}(\eta\Delta(1))-\kappa_{p,q}(\eta\widetilde{\Delta}(1))\right\vert \lesssim N^{-(\Gamma-3)/2+\xi},
\label{eq:kappa_match_main}
\end{equation}
for any fixed $\xi>0$. Notice additionally that
\begin{equation}
\kappa_{p+q}(N^{1/2}\widetilde{w}_{aa}):=\widetilde{\kappa}^\dif_{p+q}=\kappa^\dif_{p+q},
\label{eq:kappa_match}
\end{equation}
since $\Gamma>p+q$. Combining \eqref{eq:kappa_match_main}, \eqref{eq:kappa_match} and \eqref{eq:kappapq_new} for $\widetilde{W}$, be obtain that \eqref{eq:kappapq_new} holds for $W$ with an additional error term of order $N^{-(\Gamma-3)/2+\xi}$. Since $\Gamma\ge p+q+2$, it can be absorbed into the error term in the rhs. of \eqref{eq:kappapq_new}. This finishes the proof of~\eqref{eq:kappapq_new}.
 
The proofs of \eqref{eq:main_pq} and \eqref{eq:main_p0} are analogous to the one of \eqref{eq:kappapq_new}, so we omit  the details. We only mention that \eqref{eq:main_p0} relies on \eqref{eq:main_p0_local} instead of \eqref{eq:kappapq_local}, and \eqref{eq:main_pq} relies on \eqref{eq:main_pq_local}.
\end{proof}

\section{ Static chaos expansion in the global regime}\label{sec:global}

In this section we prove Proposition~\ref{prop:global} via the static chaos expansion. As it was already mentioned in the end of Section~\ref{sec:strategy_Gauss}, the family of cumulants  in \eqref{eq:general_av_kappa} is not closed under this expansion procedure due to the expectation of fully underlined terms which need further expansion that generates new types of terms. To this end, we now introduce a larger family of cumulants that will be closed. For $n\in\N$, $\bm{k}=(k_i)_{i=1}^n\in \N^n$ and a set of index pairs $\bm{\phi}:=(\phi_i)_{i=1}^n\subset ([N]\times[N])^n$ denote
\begin{equation}
\mathrm{Iso}(\bm{k},\bm{\phi}):=N^{\mathrm{od}(\bm{\phi})/2}\prod_{i=1}^n (G^{k_i})_{\phi_i},\qquad \mathrm{Iso}(\emptyset,\emptyset):=1.
\label{eq:def_Iso}
\end{equation} 
Here $\mathrm{od}(\bm{\phi})$ is the number of off-diagonal index pairs in $\bm{\phi}$ (i.e. the ones with two distinct indices). We call \eqref{eq:def_Iso} an \emph{isotropic quantity}, in contrast to $\Delta(\bm{k})$ which is referred as a \emph{tracial quantity}. The normalization in \eqref{eq:def_Iso} is chosen in such a way that $\left\vert\mathrm{Iso}(\bm{k},\bm{\phi})\right\vert\prec 1$ in the global regime $z\in\mathcal{D}_g$ by the isotropic local law \eqref{eq:kG_iso}. The family of cumulants under consideration is as follows:
\begin{equation}
\kappa\left(\mathrm{Iso}(\bm{k}_0,\bm{\phi})\Delta(\bm{k}_1)\overline{\mathrm{Iso}(\bm{l}_0,\bm{\phi}')\Delta(\bm{l}_1)}, \left\lbrace \Delta(\bm{k}_j)\overline{\Delta(\bm{l}_j)}\right\rbrace_{j=2}^p\right),
\label{eq:extended_family}
\end{equation}
where $\{\bm{k}_j\}_{j=0}^p$, $\{\bm{l}_j\}_{j=0}^p$ are (possibly empty) multi-indices and $\bm{\phi}, \bm{\phi}'$ are sets of index pairs of the same cardinality as $\bm{k}_0$ and $\bm{l}_0$, respectively.  Note that \eqref{eq:extended_family} differs from \eqref{eq:general_av_kappa} by the isotropic quantities included into the first argument of the cumulant. There is no need to extend the hierarchy further by introducing isotropic quantities in the remaining arguments, since \eqref{eq:extended_family} turns out to be  already closed under expansions.

For any $p\in\N$ with $p\ge 2$ and $n,S\in\N$ denote
\begin{equation}
\Psi^{(p)}(n,S):=\max\bigg\lbrace \left\vert \eqref{eq:extended_family}\right\vert : \sum_{j=1}^p \left(\mathfrak{n}(\bm{k}_j)+\mathfrak{n}(\bm{l}_j)\right) \le n,\, \sum_{j=0}^p \left(|\bm{k}_j|+|\bm{l}_j|\right)+\mathrm{od}(\bm{\phi})+\mathrm{od}(\bm{\phi}')\le S\bigg\rbrace.
\label{eq:def_Psi}
\end{equation}
Here $n$ is a threshold on the number of traces in \eqref{eq:extended_family} and $S$ is a threshold on the number of resolvents in traces and isotropic quantities in  \eqref{eq:extended_family} plus the number of off-diagonal entries in isotropic quantities. In the case when the set of cumulants in \eqref{eq:def_Psi} is empty (e.g. when $n\le p-2$), we set $\Psi^{(p)}(n,S):=0$. We also set $\Psi^{(p)}(n,S):=0$ for $p\le 1$. In the sequel, the estimates on $\Psi^{(p)}(n,S)$ will depend only on $p$ and $n$, while $S$ will be used as a counter. In particular, the precise definition  of  $S$ will play a crucial role twice in the proof of Proposition~\ref{prop:global}, see \eqref{eq:case2_ext_terms} and \eqref{eq:cum_intermediate_case2_5} below.

The local laws \eqref{eq:kG_av}--\eqref{eq:kG_iso} imply the following a priori bound on \eqref{eq:def_Psi}:
\begin{equation}
\Psi^{(p)}(n,S)\lesssim N^\xi,
\label{eq:Psi_a_priori}
\end{equation}
for any fixed $\xi>0$ and $p,n,S\in\N$. We stress that the implicit constant in \eqref{eq:Psi_a_priori} may depend on $p,n,S$ but not on $N$. This also concerns all estimates in this section, even if  not stated explicitly. To prove Proposition~\ref{prop:global} we need the following improved bound on $\Psi^{(p)}(n,S)$:

\begin{proposition}\label{prop:global_Psi} Let $W$ be a Wigner matrix satisfying Assumption~\ref{ass:momass}. Fix a (small) $\varepsilon>0$ and a (large) $C>0$. For any fixed $p,n,S\in\N$ it holds that 
\begin{equation}
\Psi^{(p)}(n,S) \lesssim  N^{-(p-2)/2+\xi} \left( N^{-1} + \bm{1}_{n\ge p} N^{(n-p)/2}\right),
\label{eq:Psi_improved}
\end{equation}
for any fixed $\xi>0$, uniformly in $z\in\mathcal{D}_g(\varepsilon,C)$, which is defined in~\eqref{eq:def_D_global}.
\end{proposition}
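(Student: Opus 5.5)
We prove \eqref{eq:Psi_improved} by a double induction: an outer induction on $p$, and for fixed $p$ a downward induction on $S$, with the a priori bound \eqref{eq:Psi_a_priori} as the base input. Note first the trivial ranges. If $n\le p-2$, some argument of \eqref{eq:extended_family} with $j\ge2$ is deterministic (equal to $1$), so the cumulant vanishes. If $p=2$, the target is $N^{\xi}$ times $(N^{-1}+\bm 1_{n\ge2}N^{(n-2)/2})\gtrsim 1$ whenever the set is nonempty, which is \eqref{eq:Psi_a_priori}. So the content is $p\ge3$, where the claim gains a factor $N^{-1/2}$ per additional argument, up to the loss $N^{(n-p)/2}$ when the arguments carry more than one trace each.

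The key step is a single expansion of one resolvent. In the cumulant \eqref{eq:extended_family}, pick one factor $G$, preferably in a trace $\Delta(\bm k_1)$ of the first argument or in an isotropic entry. Write $G=m-m\underline{WG}+m\langle G-m\rangle G$ via \eqref{eq:underline_basic_id}, so that $G^{k}-m_k$ becomes an underlined term plus terms containing an extra trace factor $\Delta(1)$ times a resolvent chain. For the underlined term $\underline{WG\cdots}$, perform a full cumulant expansion $\E w_{ab}F(W)=\sum_{r\ge1}\frac{\kappa_{r+1}}{r!}\E\partial^r_{ab}F$, keeping the cumulant structure. This means expanding $\kappa(w_{ab}Y_1,Y_2,\dots,Y_p)$ via the joint-cumulant version of the expansion: derivatives hitting other arguments $Y_j$ merge arguments, reducing $p$ by one. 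The second-order ($r=1$) terms cancel against the underline by construction, except where $\partial_{ab}$ acts on another cumulant argument $Y_j$. There it produces a trace of a product with $N^{-2}\sum_{ab}$, i.e. an argument of type $N^{-1}\Tr G^{k+k'}$. This merges two cumulant arguments into one, giving $\Psi^{(p-1)}$ times a factor $N^{-1}$ against the loss of one trace, and it contributes $N^{-1}\cdot N^{-(p-3)/2}=N^{-(p-1)/2}$, consistent with the target.

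Higher cumulant terms ($r\ge2$) carry $N^{-(r+1)/2}$ and derivatives produce off-diagonal isotropic entries $G_{ab}$ with index sums. Diagonal entries go into $\mathrm{Iso}$ with no $N^{1/2}$ cost; off-diagonal ones cost a factor $N^{1/2}$ each but are absorbed in the normalization of \eqref{eq:def_Iso}. This increases $S$, so these terms are controlled by the induction in $S$, and that is exactly why $S$ counts off-diagonal pairs. The $\Delta(1)$-type terms from \eqref{eq:underline_basic_id} give the self-consistent factor $(1-m^2)^{-1}$, invertible in $\mathcal D_g$. This yields a hierarchy schematically of the form
\[
\Psi^{(p)}(n,S)\lesssim N^{-1/2}\Psi^{(p)}(n+1,S+1)+N^{-1}\Psi^{(p-1)}(n,S+2)+\dots
\]
Iterating it finitely many times until $S$ exceeds a threshold, and using \eqref{eq:Psi_a_priori} at the end, closes the induction because each step gains at least $N^{-1/2}$.

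The main obstacle is the fully underlined terms with non-Gaussian entries. For those where all derivatives act inside the first argument, we get expectations $\E$ of terms like $\sum_{a}\kappa^{\rm d}_{r+1}N^{-(r+1)/2}\prod (G^{k})_{aa}$, which produce exactly the $\bm 1_{n\ge p}N^{(n-p)/2}$-type and leading $\kappa^{\rm d}$ contributions. A naive estimate here loses, since diagonal index sums $\sum_a$ yield a factor $N$. I would handle them by not estimating but re-expanding: keep them in the hierarchy as isotropic cumulants (this is why the first argument is allowed to contain $\mathrm{Iso}$). Their expectations are then re-expanded in turn using $G_{aa}=m+\dots$, and the counting must be verified separately for diagonal versus off-diagonal index patterns. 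Careful power counting in this step, ensuring each new sum over $a$ is compensated by a $N^{-1/2}$ from a cumulant order or an off-diagonal entry, is where I expect most of the work.
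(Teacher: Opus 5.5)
Your ingredients match the paper's: the renormalized expansion of one resolvent via \eqref{eq:G_underline}, a cumulant expansion that keeps the cumulant structure and merges arguments hit by derivatives, isotropic entries in the first argument normalized by $N^{1/2}$ per off-diagonal pair, and a counter $S$. The gap is in how you close the argument. It is not true that every term of the expansion gains $N^{-1/2}$. Writing $G^k=G\cdot G^{k-1}$ produces $\Delta(k-1)$ and $m_{k+1-l}\Delta(l)$ with $O(1)$ coefficients (see \eqref{eq:Delta_k}), i.e.\ a term $\Psi^{(p)}(n,S-1)$ with no gain. The $a\neq b$ terms in which no other argument is hit are bounded only by $\Psi^{(p)}(n-1,S+2L)$, again with no gain. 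An off-diagonal entry that becomes diagonal loses its $N^{-1/2}$. A downward induction in $S$, stopped at a threshold by \eqref{eq:Psi_a_priori}, cannot absorb these terms, in particular those with smaller $S$. This is the real reason $\mathrm{od}(\bm\phi)$ is counted in $S$. It is not there to handle terms whose $S$ grows, as you suggest (and an off-diagonal entry is a gain of $N^{-1/2}$ under \eqref{eq:def_Iso}, not a cost). Rather, losing off-diagonality lowers $S$, so those terms are bounded by $\Psi^{(p)}(n,S-1)$, cf.\ \eqref{eq:case2_ext_terms} and \eqref{eq:cum_intermediate_case2_5}. The master inequalities \eqref{eq:global_master} are then closed by a lexicographic induction in $(p,d,n,S)$, where $d$ counts the gained half-powers of $N$. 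The induction runs on a finite set in which $S_{\max}$ grows as $S\mapsto 3S+2(p_0+4)$ to absorb the shifts $S+2L$.

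Your claim that $p=2$ is trivial is also false. For $p=2$, $n=1$ the set in \eqref{eq:def_Psi} is nonempty (a purely isotropic first argument against a single trace), and the target is $N^{-1+\xi}$. This is a genuine covariance bound that \eqref{eq:Psi_a_priori} does not give. It is needed immediately. For $p=3$, $n=3$, the $a\neq b$ terms in which one derivative hits one other argument are bounded by $\Psi^{(2)}(1,\cdot)$ with an $O(1)$ coefficient (case $r=1$ of \eqref{eq:cum_intermediate1}), while the target is $N^{-1/2+\xi}$. So your outer induction on $p$ already fails at $p=3$. In the paper, $p=2$ runs through the same master inequality, and the general case $n=p-1$ needs the second induction \eqref{eq:Psi_improved_iterative1}. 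The $p=2$, $n=1$, $\ell=3$ expectation term moreover requires the isotropic resummation \eqref{eq:iso_resum}. Finally, you misplace the expectation terms. They arise when derivatives hit all $p-1$ other arguments. When all derivatives stay in the first argument, the cumulant keeps order $p$, and that is exactly where the isotropic quantities enter. These expectation terms are not re-expanded but bounded directly by local laws. For $a=b$ the bound is $N\cdot N^{-\ell/2}\le N^{-(p-2)/2}$, since $\ell\ge p$. For $a\neq b$ it is $N^{-(p-2)}$, via \eqref{eq:final_term_trick1}--\eqref{eq:alpha1_bound}, with the refinements of Section~\ref{sec:cum_exp_last_term2}. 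This yields the inhomogeneous term $N^{-(p-2)/2+\xi}(N^{-1}+\bm 1_{n\ge p})$.
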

We remark that the regime $n<p$ is non-trivial in \eqref{eq:Psi_improved} only for $n=p-1$, since for smaller $n$ we set $\Psi$ equal to zero. The case $n=p-1$ corresponds to the situation when there are only isotropic quantities in the first argument of the cumulant \eqref{eq:extended_family}, and each of the remaining arguments contains exactly one trace. If $n\ge 2p-1$ then \eqref{eq:Psi_improved} is even weaker than~\eqref{eq:Psi_a_priori}, but the sharpness of the estimate in this regime is not relevant in the sequel.

Note that Proposition~\ref{prop:global_Psi}  implies the optimal upper bound (up to the factor $N^\xi$) on the lhs. of \eqref{eq:kappapq_new} in the global regime, $\eta\sim 1$:
\begin{equation}
\left\vert\kappa_{p,q}\left(N\eta\langle G(z)-m(z)\rangle\right) \right\vert \lesssim \Psi^{(p+q)}(p+q,p+q) \lesssim N^{-(p+q-2)/2+\xi},
\end{equation}
(here $p+q$ plays the role of $p$ in  Proposition~\ref{prop:global_Psi} and we consider the case $n=p+q$). However, it does not identify the leading term in the rhs. of \eqref{eq:kappapq_new}, so this will be done separately later in this section.

The remainder of this section is structured as follows. First, in Section~\ref{sec:proof_global_Psi} we state the hierarchy of inequalities satisfied by the control parameters~\eqref{eq:def_Psi}, this is the content of Proposition~\ref{prop:global_master}. Using this hierarchy to gradually improve the a priori bound~\eqref{eq:Psi_a_priori}, we prove Proposition~\ref{prop:global_Psi}. Afterwards, in Sections~\ref{sec:Psi_preliminaries}--\ref{sec:diag_expansion} we prove Proposition~\ref{prop:global_master} via the static chaos expansion. Finally, in Section~\ref{sec:proof_global} we use Proposition~\ref{prop:global_Psi} together with the first step of the chaos expansion to identify the leading term in the rhs. of \eqref{eq:kappapq_new} in the global regime and complete the proof of Proposition~\ref{prop:global}.

\subsection{Proof of Proposition~\ref{prop:global_Psi}}\label{sec:proof_global_Psi}

The main technical step in the proof of Proposition~\ref{prop:global_Psi} is as follows.

\begin{proposition}[Master inequalities]\label{prop:global_master} Assume the set-up and conditions of Proposition~\ref{prop:global}. For any fixed $p\ge 2$, $n,S\in\N$ and $\xi> 0$ it holds that
\begin{equation}
\begin{split}
\Psi^{(p)}(n,S)\lesssim &\Psi^{(p)}(n,S-1) + \Psi^{(p)}(n-1,S+2L)+\frac{1}{N^{1/2}}\Psi^{(p)}(n,S+2L)+ \frac{1}{N}\Psi^{(p)}(n+1,S+1)\\
+&\sum_{r=1}^{p-2}\left(\frac{1}{N^{r-1/2}}+\frac{1}{N^{(r+1)/2}}\right)\Psi^{(p-r)}(n-r,S+2L)\\
+&\sum_{r=1}^{p-2} \frac{1}{N^{(r-1)/2}} \Psi^{(p-r)}(n-r-1,S+2L) + N^{-(p-2)/2+\xi} \left( N^{-1} + \bm{1}_{n\ge p}\right),
\end{split}
\label{eq:global_master}
\end{equation}
where $L=L^{(p)}(S):=p+S+4$.
\end{proposition}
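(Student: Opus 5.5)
We prove the master inequality \eqref{eq:global_master} by performing one step of the static chaos expansion on a generic cumulant \eqref{eq:extended_family} whose parameters obey the constraints in \eqref{eq:def_Psi}. The plan is to isolate one resolvent factor in the first argument, apply the basic identity \eqref{eq:underline_basic_id} (or its isotropic analogue), and then expand the underlined term by the full cumulant expansion. Each resulting term is then recognized as a cumulant of the same family with modified parameters, multiplied by an explicit power of $N$.

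\emph{Choice of the expanded factor.} If the first argument contains at least one trace $\mathrm{Tr}[G^{k}-m_k]$, we expand that trace. Otherwise, by the constraint $n\ge p-1$, the first argument consists only of isotropic entries $(G^k)_{\phi}$, and we expand one of those.

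\emph{First step: the identity.} Using $G-m=m\langle G-m\rangle G-m\underline{WG}$, together with its analogue for $G^k$ obtained by differentiating in $z$, the quantity $\mathrm{Tr}[G^k-m_k]$ becomes
\begin{itemize}
\item[(a)] a term with the extra factor $\langle G-m\rangle$, which has one more trace and carries the factor $1/N$;
\item[(b)] terms with fewer resolvents (from the $m_j$-corrections), which give the $\Psi^{(p)}(n,S-1)$ contribution;
\item[(c)] the underlined term $\underline{WG^k}$.
\end{itemize}
For (c) we write the cumulant via cumulant–moment relations and apply the cumulant expansion in the entries $w_{ab}$, truncated at order $2L$. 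The truncation order $2L$ explains the resolvent counts $S+2L$. Along the way we rebuild the cumulant structure, i.e. we use the joint-cumulant version of the Gaussian integration by parts: $\kappa(WF_1,F_2,\ldots)$ equals a sum over derivatives hitting each argument $F_j$.

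\emph{Sorting the terms of (c).}
\begin{itemize}
\item Second-order terms where the derivative hits the same argument are cancelled by the underline renormalization, except for diagonal corrections of relative order $N^{-1/2}$. These give $N^{-1/2}\Psi^{(p)}(n,S+2L)$.
\item Second-order terms where the derivative hits another argument $j\ge2$ merge that argument into the first one. This produces a trace of a product of resolvents, isotropized as $\sum_{ab}$. Cumulant additivity then yields the terms $\Psi^{(p)}(n-1,S+2L)$, and the terms where $r$ arguments are merged give $\Psi^{(p-r)}(n-r,\cdot)$ or $\Psi^{(p-r)}(n-r-1,\cdot)$ with the stated powers of $N$.
\item Higher-order cumulants $\kappa_{j+1}$ of the entries carry $N^{-(j+1)/2}$, and summing over $a,b$ creates isotropic entries. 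These entries are exactly why $\mathrm{Iso}$ quantities must be allowed in the first slot. Off-diagonal entries absorb $N^{1/2}$ factors through the normalization in \eqref{eq:def_Iso}, which is why $\mathrm{od}$ is counted in $S$.
\item When $\kappa^{\mathrm d}$ diagonal contributions are fully distributed among all $p$ arguments, only expectations remain. These we bound directly by the a priori bound \eqref{eq:Psi_a_priori}, giving the explicit term $N^{-(p-2)/2+\xi}(N^{-1}+\bm 1_{n\ge p})$.
\end{itemize}

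\emph{Main obstacle.} The hardest step is the power counting in the sorting above. One must verify that every term produced by the $a,b$ summations gains enough powers of $N$ to match the stated coefficients, in particular $N^{-(r-1)/2}$ versus $N^{-(r-1/2)}$, using isotropic local laws and Ward identities where needed. One must also check that the family \eqref{eq:extended_family} is genuinely closed: isotropic quantities arising in arguments $j\ge2$ must be merged into the first argument via the cumulant expansion. I would first handle the Gaussian second-order terms carefully, then treat the higher cumulant terms by counting free summation indices against off-diagonal resolvent entries.
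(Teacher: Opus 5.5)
\textbf{There is a genuine gap.} Your architecture is the paper's: apply the underline identity to one factor of the first argument, push the renormalization onto the whole cumulant (Lemma~\ref{lem:underline_exp_general}), and book-keep off-diagonal gains through the $N^{\mathrm{od}/2}$ normalization of $\mathrm{Iso}$. Your choice to expand a trace whenever the first argument contains one, rather than an isotropic entry as the paper does, is not where the problem lies. But the proposition \emph{is} the power counting, and the one mechanism you commit to for the inhomogeneous term fails.

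The terms where the higher-order derivatives hit all of $f_2,\dots,f_p$ cannot be bounded via \eqref{eq:Psi_a_priori}, i.e.\ by estimating every factor by $N^\xi$. The $a\neq b$ part of $N^{-\ell/2}\sum_{a,b}$ with $\ell\ge p$ would then be $N^{2-p/2+\xi}$, a factor $N$ too large. These off-diagonal terms are also not ``$\kappa^{\mathrm d}$ contributions''. What is needed is \eqref{eq:final_term_trick1}: a trace hit once by $\partial_{ab}$, $a\neq b$, becomes an off-diagonal entry of size $O_\prec(N^{-1/2})$, so each hit argument costs $N^{-1}$.

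More seriously, when $n=p-1$ the inhomogeneous term must be $N^{-(p-2)/2-1+\xi}$. This full extra $N^{-1}$ is what gives $\Psi^{(p)}(p-1,\cdot)\lesssim N^{-(p-2)/2-1+\xi}$ in Proposition~\ref{prop:global_Psi}, and it is indispensable in \eqref{eq:global_pq_error_term}. In this case the first argument is purely isotropic. Expanding $N^{1/2}(G^k)_{ab}$ leaves a single free index $c$, and you must treat separately:
\begin{itemize}
\item $c=b$, where the prepaid $N^{1/2}$ is lost;
\item $c=a$;
\item $c$ among the indices of $\bm\phi$;
\item generic $c$, using \eqref{eq:final_term_trick2}.
\end{itemize}
Even then, for $p=2$, $n=1$, $\ell=3$, $\bm\alpha_1=\emptyset$ with both entries of $\partial_{\bm\alpha}\Delta(k)$ diagonal, counting gives only $N^{-1/2}$. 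The paper writes $(G^u)_{cc}=m_u+(G^u-m_u)_{cc}$ and resums $\sum_{c\neq a}\kappa_{ac}(G^{k_{01}})_{cb}=(G^{k_{01}})_{\bm v b}$ with $\|\bm v\|\lesssim\sqrt N$. By the isotropic local law this is $O_\prec(1)$, rather than the entrywise $O_\prec(\sqrt N)$; see \eqref{eq:iso_resum}. Nothing in your plan produces this gain.

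Your sorting of the remaining terms is also misattributed, which matters because the exact powers of $N$ are the content of \eqref{eq:global_master}.
\begin{itemize}
\item Since $\E|\chi_{\mathrm d}|^2=2/\beta$, the second-order renormalization is exact and there is no diagonal correction. The term $N^{-1/2}\Psi^{(p)}(n,S+2L)$ comes from $\ell\ge 3$ terms hitting no other argument (e.g.\ $a=b$, $r=0$). In the isotropic case it also comes from the underline extension \eqref{eq:case2_ext2} producing a diagonal entry.
\item Gaussian terms hitting some $f_j$ lower the order of the cumulant, giving $\frac1N\Psi^{(p-1)}(n-1,S+1)+\Psi^{(p-1)}(n-2,S-2)$.
\item $\Psi^{(p)}(n-1,S+2L)$ instead comes from $\ell\ge3$, $a\ne b$, $r=0$. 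For $\ell=3$ a parity argument is needed: the index $a$ occurs three times, so some entry is off-diagonal, which compensates the factor $N^2N^{-3/2}$.
\item Merging $r$ arguments needs $\ell\ge r+1$ derivatives. The coefficients $N^{-(r-1)}$, $N^{-(r-1)/2}$, $N^{-(r-1/2)}$, $N^{-r}$ come from the lower bound $r+s+1-t$ on the number of new off-diagonal entries: each differentiated trace creates one, and each differentiated entry destroys at most one. This is exactly the verification you postpone.
\item The truncation is at order $L=p+S+4$, and the terms with $\ell\ge L$ are bounded directly by local laws, $N^{2-\ell/2+\xi}\lesssim N^{-p/2}$.
\end{itemize}
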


We refer to the inequalities in Proposition \ref{prop:global_master} as \emph{master inequalities}, adopting the terminology from \cite{Multi_res_llaws}, where a similar hierarchy \cite[Eq. (3.20a), (3.20b)]{Multi_res_llaws} was employed to prove \eqref{eq:kG_av}, \eqref{eq:kG_iso} as well as more general multi-resolvent local laws for Wigner matrices. The key difference is that in \cite{Multi_res_llaws} the hierarchy was used to control the magnitude of fluctuations of a product of resolvents, while now we control cumulants, that is specific combinations of expected values. Note that cumulants are typically much smaller than fluctuations, hence much higher precision is needed for our new hierarchy.

Now we prove Proposition~\ref{prop:global_Psi} by gradually improving the a priori bound \eqref{eq:Psi_a_priori} using the master inequalities \eqref{eq:global_master}. First, we show that
\begin{equation}
\Psi^{(p)}(n,S)\lesssim N^\xi\left( N^{-d/2} + N^{(n-p)/2}N^{-(p-2)/2}\right),
\label{eq:Psi_improved_iterative}
\end{equation}
 with the implicit constant which does not depend on $N$,
but may depend on $p,d,n,S\in\N\cup\{0\}$. \nc Taking $d$ in~\eqref{eq:Psi_improved_iterative} sufficiently large, we obtain \eqref{eq:Psi_improved} for $n\ge p$ and a slightly weaker version of~\eqref{eq:Psi_improved} for $n= p-1$. In the end of this proof we will show how to gain the missing factor $N^{-1/2}$ for $n=p-1$.

We prove \eqref{eq:Psi_improved_iterative} by induction on $(p,d,n,S)\in (\N\cup\{0\})^4$ using \eqref{eq:global_master}
for  the inductive step
and some straightforward base cases. 
We need two ingredients: (i) a linear order $
\lhd$ on $(\N\cup\{0\})^4$, and (ii) a finite subset $\mathscr{S}\subset (\N\cup\{0\})^4$ on which the induction is carried out to ensure that $p,d,n,S$ remain $N$-independent throughout the proof. First, 
for any two elements of $(\N\cup\{0\})^4$ we set
\begin{equation}
(p_1,d_1,n_1,S_1) \lhd (p_2,d_2,n_2,S_2) \Longleftrightarrow
\begin{cases}
p_1<p_2,&\text{or}\\
p_1=p_2,\, d_1<d_2,&\text{or}\\
p_1=p_2,\, d_1=d_2,\,n_1<n_2,&\text{or}\\
p_1=p_2,\, d_1=d_2,\,n_1=n_2,\, S_1<S_2.
\end{cases}
\label{eq:def_lexicographic}
\end{equation}
This defines a linear lexicographic order on $(\mathbf{N}\cup\{0\})^4$. Similarly, we define the lexicographic order on $(\N\cup\{0\})^3$ with coordinates $(p,d,n)$ and denote it by the same symbol $\lhd$.  Next, fix some $\bm{s}_0:=(p_0,d_0,n_0,S_0)\in(\N\cup\{0\})^4$ and think of this as the quartet of parameters where we want to prove \eqref{eq:Psi_improved_iterative}. One may assume that $n_0\le 2p_0-3$, since in the complementary case \eqref{eq:Psi_improved_iterative} immediately follows from the a priori bound \eqref{eq:Psi_a_priori}. Similarly, on may assume that $d_0\le p_0$: otherwise $d_0$ can be replaced by $p_0$ and the rhs. of \eqref{eq:Psi_improved_iterative} will remain of the same order due to the second term in it which is larger than $N^{-d_0/2}$. \nc To prove \eqref{eq:Psi_improved_iterative} for $\bm{s}_0$, introduce an  auxiliary (finite) set
\begin{equation}
\mathscr{S}_0=\mathscr{S}_0(p_0)\nc:=\left\lbrace (p,d,n)\in(\N\cup\{0\})^3\,:\, p\le p_0,\, n\le 2p_0-3,\, d \le p_0\right\rbrace.
\end{equation}
Write the elements of $(\mathscr{S}_0,\lhd)$ in the decreasing order as $\bm{a}_1\rhd\ldots\rhd\bm{a}_R$ with $R:=|\mathscr{S}_0|$. Define the function $S_{\max} = S_{\max}^{(p_0,S_0)}\nc:\mathscr{S}_0\to\N$  recursively:
\begin{equation}
S_{\max}^{(p_0,S_0)}(\bm{a}_1):=S_0,\qquad S_{\max}^{(p_0,S_0)}(\bm{a}_j):=3S_{\max}^{(p_0,S_0)}(\bm{a}_{j-1})+2(p_0+4),\,\,\text{for}\,\, 2\le j\le R.
\end{equation}
Finally, we set  
\begin{equation}
\mathscr{S}=\mathscr{S}(p_0,S_0)\nc:=\left\lbrace (p,d,n,S)\in (\N\cup\{0\})^4\,:\, (p,d,n)\in\mathscr{S}_0,\, 
S\le S_{\max}^{(p_0,S_0)}(p,d,n)\right\rbrace.
\end{equation}
By construction, $\bm{s}_0\in \mathscr{S}$.\nc

We argue by induction on $(\mathscr{S},\lhd)$. The base cases are given by the elements of $\mathscr{S}$ such that $pdnS=0$. Indeed, if at least one of the parameters $p,n,S$ equals to zero, then $\Psi^{(p)}(n,S)=0$ by definition and \eqref{eq:Psi_improved_iterative} holds. For $d=0$, \eqref{eq:Psi_improved_iterative} immediately follows from the a priori bound \eqref{eq:Psi_a_priori}. Our inductive step is
\begin{equation}
\eqref{eq:Psi_improved_iterative} \text{ holds for all } \bm{s}'\in\mathscr{S}\text{ with }\bm{s}'\lhd \bm{s}\Longrightarrow \eqref{eq:Psi_improved_iterative} \text{ holds for } \bm{s}=(p,d,n,S)\in\mathscr{S}.  
\label{eq:induction}
\end{equation}
We obtain \eqref{eq:induction} from \eqref{eq:global_master} applied to $\Psi^{(p)}(n,S)$. The inhomogeneous term in the rhs. of \eqref{eq:global_master} is estimated from above by $1$. The first two terms in the rhs. of \eqref{eq:global_master} are estimated by the inductive assumption \eqref{eq:Psi_improved_iterative} for $(p,d,n,S-1)$ and $(p,d,n-1,S+2L)$. These elements may not be in $\mathscr{S}$ only for the trivial reasons $S-1<0$ and $n-1<0$, respectively, where the corresponding $\Psi$'s equal to zero by definition. We omit the discussion of similar degeneracies for the remaining terms $\Psi^{(p')}(n',S')$ in the rhs. of \eqref{eq:global_master}. Estimating these terms by \eqref{eq:Psi_improved_iterative} for $(p',d-1,n',S')$, we finish the proof of~\eqref{eq:induction}.

To cover the case $n=p-1$ in \eqref{eq:Psi_improved}, we also prove a closely related to \eqref{eq:Psi_improved_iterative} second bound, namely that
\begin{equation}
\Psi^{(p)}(n,S)\lesssim N^\xi\left( N^{-d/2} + N^{n-p}N^{-(p-2)/2}\right)
\label{eq:Psi_improved_iterative1}
\end{equation}
holds for any $p, d, n, S$. In fact, the proof of~\eqref{eq:Psi_improved_iterative1} is identical to that of~\eqref{eq:Psi_improved_iterative}, except that the inhomogeneous term $\left( N^{-1} + \bm{1}_{n\ge p}\right)$ in \eqref{eq:global_master} is overestimated by $N^{n-p}$ (recall that one may assume  $n\ge p-1$ in \eqref{eq:global_master}, otherwise $\Psi^{(p)}(n, S)=0$ by definition and no need to use  \eqref{eq:global_master}). Now the proof of~\eqref{eq:Psi_improved} for $n=p-1$ directly follows from~\eqref{eq:Psi_improved_iterative1}. This finishes the proof of Proposition~\ref{prop:global_Psi}.\hfill\qed

\subsection{Proof of Proposition~\ref{prop:global_master}: preliminaries}\label{sec:Psi_preliminaries}

In this section we collect some basic notations and facts needed for the proof of Proposition~\ref{prop:global_master}. First, for a multi-index $\bm{k}=(k_i)_{i=1}^n\in\N^n$ and a set of indices $I\subset [n]$, we define
\begin{equation}
\bm{k}^{[I]}:=(k_i)_{i\in [n]\setminus I}.
\end{equation}
For a finite set $S=\{s_1,\ldots,s_n\}$ and $p\in\N$ denote the set of partitions of $S$ into $p$ subsets by
\begin{equation}
P_p(S):= \left\lbrace \{S_j\}_{j=1}^p: S_j\subset S,\, S=\cup_{j=1}^p S_j,\, S_i\cap S_j=\emptyset,\,\forall i\neq j\right\rbrace.
\end{equation}
Here by disjointness of $S_i$ and $S_j$ for $i\neq j$ we mean that there is no element $s_k$ simultaneously belonging to $S_i$ and $S_j$. However, we do not exclude the possibility that an element of $S_i$ is equal to some element of $S_j$, though these elements have different indices (i.e. the partition is really on the index set $1, 2,\ldots, n$ of the elements $s_1, \ldots, s_n$, but we ignore this subtle difference for simplicity). We also note that some of the subsets $S_j$ may be empty. 

For a set of index pairs $\bm{\alpha}=(\alpha_1,\ldots,\alpha_k)\in([N]^2)^k$ denote the normalized cumulant of the corresponding elements of $W$ by
\begin{equation*}
\kappa(\bm{\alpha})=\kappa(\alpha_1,\ldots,\alpha_k):=\kappa(\sqrt{N}w_{\alpha_1},\ldots,\sqrt{N}w_{\alpha_k}).
\end{equation*}
Since the entries of $W$ are independent up to the symmetry constraint, $\kappa(\bm{\alpha})$ does not vanish only when $\alpha_1,\ldots,\alpha_k\in\{ab,ba\}$ for some $a,b\in[N]$. We also have from the moment assumption \eqref{eq:momass} that $|\kappa(\bm{\alpha})|\le C_k’$ for some constants depending only on $k$. Let $\partial_{\alpha}$ be the derivative with respect to $w_{\alpha}$. Denote for short 
\begin{equation*}
\partial_{\bm{\alpha}}=\partial_{\alpha_1}\cdots \partial_{\alpha_k}\quad \text{for}\quad \bm{\alpha}=(\alpha_1,\ldots,\alpha_k).
\end{equation*}

The following lemma shows how to perform static expansions of cumulants. 

\begin{lemma}\label{lem:underline_exp_general} Assume the set-up and conditions of Proposition~\ref{prop:global}. Fix $p\in\N$ with $p\ge 2$, let $A\in\C^{N\times N}$  be a deterministic matrix, and consider
\begin{equation}
F:=G^kA\,\mathrm{Iso}(\bm{k}_0,\bm{\phi})\Delta(\bm{k}_1)\overline{\mathrm{Iso}(\bm{l}_0,\bm{\phi}')\Delta(\bm{l}_1)},\qquad f_j=\Delta(\bm{k}_j)\overline{\Delta(\bm{l}_j)}, \, \, j\in [2,p],
\label{eq:def_Ff}
\end{equation}
with fixed $k\in \N$, $\bm{k}_j\in\N^{n_{1,j}}$, $\bm{l}_j\in \N^{n_{2,j}}$ with $n_{1,j},n_{2,j}\in\N\cup\{0\}$ and sets of index pairs $\bm{\phi},\bm{\phi}'$. Then uniformly in $z\in\mathcal{D}_g$ and $A$ with $\|A\|\lesssim 1$ it holds that    
\begin{equation}
\begin{split}
&\kappa\left(\mathrm{Tr}\left[\underline{WF}\right] ,f_2,\ldots,f_p\right) =\sum_{j=2}^p \kappa\left(\widetilde{\E}\left[\mathrm{Tr}\big[\widetilde{W}F\big]\partial_{\widetilde{W}}[f_j]\right],\{f_i\}_{i\neq j}\right)\\
&\quad -\sum_{\ell=3}^L \sum_{a,b=1}^N \sum_{\bm{\alpha}\in\{ab,ba\}^{\ell-1}}\frac{\kappa(ab,\bm{\alpha})}{(\ell-1)!N^{\ell/2}}\sum_{P_p(\bm{\alpha})}\kappa\left(\Bigg(\partial_{\bm{\alpha}_1}F_{ba} \prod_{\bm{\alpha_j}\neq\emptyset} \partial_{\bm{\alpha}_j}f_j\Bigg),\left\lbrace f_j\right\rbrace_{\bm{\alpha}_j=\emptyset}\right) + \mathcal{O}\left(N^{-D}\right),
\end{split}
\label{eq:underline_exp_general}
\end{equation}
for any fixed $D>0$. The positive integer $L$ does not depend on $N$ and $A$, but may depend on $D$, parameters from \eqref{eq:def_Ff} and the parameters of the domain $\mathcal{D}_g$ defined in \eqref{eq:def_D_global}. In \eqref{eq:underline_exp_general}, $\widetilde{W}$ is an independent copy of $W$ and the expectation $\widetilde{\E}$ is taken with respect to $\widetilde{W}$. 
\end{lemma}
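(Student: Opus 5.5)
The plan is to apply the cumulant expansion to the random variable $\mathrm{Tr}[WF]=\sum_{a,b} w_{ab}F_{ba}$, with the joint cumulant structure handled via the multilinearity of $\kappa$ and the identity for cumulants containing a product. I write $\kappa(\mathrm{Tr}[\underline{WF}],f_2,\dots,f_p)$ through the moment–cumulant relation, that is, as an alternating sum over partitions of expectations of products. For each entry I use the general cumulant expansion formula (e.g. \cite[Lemma 3.1]{HeKnowles} type)
\[
\E\, w_{ab}\,\Phi(W)=\sum_{\ell=1}^{L}\sum_{\bm\alpha\in\{ab,ba\}^{\ell-1}}\frac{\kappa(ab,\bm\alpha)}{(\ell-1)!\,N^{\ell/2}}\,\E\,\partial_{\bm\alpha}\Phi+\mathcal{O}(N^{-D}).
\]
A cleaner route is the cumulant version of this formula: $\kappa(w_{ab}F_{ba},f_2,\dots,f_p)$, with $\Phi=F_{ba}\prod_{j\in B} f_j$ for the blocks $B$ of the partition. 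Derivatives distribute over the factors by Leibniz, producing a sum over $P_p(\bm\alpha)$. Reassembling the moment–cumulant sums then yields exactly the cumulant $\kappa\big(\partial_{\bm\alpha_1}F_{ba}\prod_{\bm\alpha_j\ne\emptyset}\partial_{\bm\alpha_j}f_j,\{f_j\}_{\bm\alpha_j=\emptyset}\big)$. This is the standard fact that $\kappa(X,Y_2,\dots)$ with $X=w\Phi$ expands into the cumulant in which all differentiated factors are merged into the first slot, since $w_{ab}$ is independent of $W$ with that entry removed. I will verify this by induction on $p$ using $\kappa(X,\dots)=\E[X\cdots]-\sum(\text{lower cumulants})$.

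Next I separate the orders. The term $\ell=1$ vanishes because $\E w=0$. The $\ell=2$ terms, with $\kappa(ab,ba)$ (and $\kappa(ab,ab)$ in the real case) fixed by the variance normalization, split according to which factor receives the single derivative. If the derivative falls on $F$ itself (i.e. $\bm\alpha_1\neq\emptyset$), the sum over $a,b$ reproduces $\widetilde{\E}\,\mathrm{Tr}[\widetilde W\partial_{\widetilde W}F]$, which is precisely what the underline subtracts. Here I must check that the diagonal-entry variance $2/\beta$ matches that of an independent copy $\widetilde W$, which it does by construction of $\widetilde W$. If instead the derivative falls on some $f_j$, I get $\kappa\big(\widetilde{\E}[\mathrm{Tr}[\widetilde WF]\partial_{\widetilde W}f_j],\{f_i\}_{i\ne j}\big)$, giving the first sum. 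The remaining terms $\ell\ge3$ appear with a minus sign after the underline is moved to the other side, giving the second line.

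The main obstacle is the truncation error at level $L$. The remainder involves $\sup$ over interpolated matrices of $\partial^{L}$ of products of resolvent entries times $|w_{ab}|^{L+1}$. In $\mathcal{D}_g$ we have $\eta\ge\varepsilon$, so $\|G\|\le\varepsilon^{-1}$ deterministically, for any matrix including the interpolations. Hence all derivatives of $F_{ba}$ and $f_j$ are bounded by polynomials in $N$, because traces carry at most factor $N$ each and $\mathrm{Iso}$ carries $N^{\mathrm{od}/2}$, and these powers are fixed by the parameters. Each remainder term then carries $N^{-(L+1)/2}$ and there are $N^2$ index pairs, so choosing $L$ large in terms of $D$ and these fixed powers gives $\mathcal{O}(N^{-D})$. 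The high moment bounds \eqref{eq:momass} control $\E|w_{ab}|^{L+1}$ and make the moment–cumulant combinatorics finite, so the error also survives reassembly into cumulants.
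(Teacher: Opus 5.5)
Your route is the paper's (Appendix~\ref{app:underline}): the moment--cumulant relation, then the cumulant expansion of each expectation containing $\mathrm{Tr}[WF]$, with Leibniz over the factors of the block. The partitions are then resummed into a single cumulant whose first slot collects all differentiated factors. The $\ell=2$ terms either cancel against the underline or produce the first line, and the remainder is controlled by the deterministic bound $\|G\|\le\varepsilon^{-1}$ on $\mathcal{D}_g$. Splitting $\mathrm{Tr}[\underline{WF}]$ by multilinearity, instead of extending the underline over the block as in \eqref{eq:under_ext_B1}, is an immaterial reorganization. The resummation is easier via the bijection than by induction on $p$: partitions of $[p]$ with $\{1\}\cup J$ in one block ($J$ the set of differentiated $f_j$'s) correspond to partitions of the merged set, with the same number of blocks.

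The one step that is wrong is your account of the sign of the second line. Nothing in your computation is moved to the other side. You obtain $\kappa(\mathrm{Tr}[WF],f_2,\dots,f_p)-\kappa(\widetilde{\E}\,\mathrm{Tr}[\widetilde W\partial_{\widetilde W}F],f_2,\dots,f_p)$, and the subtracted term removes only the $\ell=2$ contribution in which the derivative hits $F$. The $\ell\ge 3$ terms of the cumulant expansion therefore survive with a plus sign. They are not negligible: the $a=b$, $\ell=p+q$ term is precisely the leading term extracted in Section~\ref{sec:proof_global}, so the sign cannot be absorbed into the error. The scalar analogue already shows this: for a single centered $w$ with $F\equiv 1$, $f_2=w^2$, the left side is $\kappa(w,w^2)=\kappa_3(w)$ and the right side is $0\pm\kappa_3(w)$. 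The minus sign in the displayed statement is a misprint. The paper's own derivation \eqref{eq:cum_expansion_appl} carries a plus sign. So does the application in \eqref{eq:global_leading_term_prefin}, where the only minus comes from \eqref{eq:Delta_k} and the result is $K_{p,q}$ as stated. So prove the identity with $+\sum_{\ell=3}^L$ and flag the discrepancy, rather than inventing a justification for the printed sign.
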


Notably, all terms in the rhs. of \eqref{eq:underline_exp_general} apart from the negligible error term are again cumulants of some random variables. However, some of them may degenerate into cumulants of one variable, that is expectations. We will treat these terms separately. We stated Lemma~\ref{lem:underline_exp_general} for general deterministic matrix $A$, however, in the proof of Proposition~\ref{prop:global_master} we will encounter only two cases: (i) $A$ is the identity matrix and (ii) $A$ has only one non-zero entry. The proof of Lemma~\ref{lem:underline_exp_general} is based on \eqref{eq:def_cum}, \eqref{eq:def_under_1} and the cumulant expansion, for more details see Appendix~\ref{app:underline}. Finally, we note that the special form of the arguments~\eqref{eq:def_Ff} is used only in the estimate on the error term in \eqref{eq:underline_exp_general}, the structure of \eqref{eq:underline_exp_general} is not specific to these functions.

Throughout the proof of Proposition~\ref{prop:global_master}  for simplicity we consider  only the cumulants of the form \eqref{eq:extended_family} where there are no conjugates:
\begin{equation}
\kappa\left(\mathrm{Iso}(\bm{k}_0,\bm{\phi})\Delta(\bm{k}_1), \left\lbrace \Delta(\bm{k}_j)\right\rbrace_{j=2}^p\right).
\label{eq:extended_family_l=0}
\end{equation}
This is not restrictive in the global regime. In the general case the only additional technicality is that the traces and matrix entries of mixed powers of the form $G^k(G^*)^l$ may arise. To deal with them, we apply the resolvent identity several times to this product, and write it as a sum of pure powers of $G$ and $G^*$ (see e.g. Lemma~\ref{lem:resolvent_id} later). This decomposition is done at the cost of some positive powers of $\eta$ appearing in denominator, but in the global regime $\eta\sim 1$, so these factors are harmless. Thus, the general case does not carry any additional difficulties compared to~\eqref{eq:extended_family_l=0} which we consider in the proof.

In the proof of Proposition~\ref{prop:global_Psi} we consider three types of cumulants in \eqref{eq:extended_family_l=0}:
\begin{equation}
(1)\,\, \bm{k}_0=\emptyset,\qquad (2)\,\, \mathrm{od}(\bm{\phi})>0,\qquad (3)\,\, \mathrm{od}(\bm{\phi})=0.
\end{equation}
In other words, in the first case all arguments of the cumulant in \eqref{eq:extended_family_l=0} are products of traces and no isotropic quantities arise. In the second case, the first argument of the cumulant contains a product of resolvent entries and some of them are off-diagonal. Finally, in the third case each of these entries is diagonal. This division into cases is not the only possible one. Its role  is to separate the first case, where the estimates are technically simpler, though capture all the ideas needed for the proof in general case. We discuss this case in detail in Section~\ref{sec:tracial_expansion} and then explain the adjustments required in the second and third cases in Sections~\ref{sec:off_diag_expansion} and~\ref{sec:diag_expansion}, respectively.

\subsection{Proof of Proposition~\ref{prop:global_master}: expansion of purely tracial quantities.}\label{sec:tracial_expansion}

For $\bm{k}_0=\emptyset$, \eqref{eq:extended_family_l=0} is of the form
\begin{equation}
\kappa\left(\Delta(\bm{k}_1),\ldots,\Delta(\bm{k}_p)\right),\quad\text{where}\quad p\ge 3,\,\,\, \mathfrak{n}(\bm{k}_j)=n_j,\,\,\, n_1+\ldots+n_p\le n,\,\,\, |\bm{k}_1|+\ldots+|\bm{k}_p|\le S.
\label{eq:kappa_under_consideration}
\end{equation}
Since we consider the case when there are no isotropic quantities in \eqref{eq:extended_family}, $S$ is simply an upper bound on the number of resolvents in \eqref{eq:extended_family}. We proceed in several steps. First, in Section~\ref{sec:expansion_1arg} we derive  that (recall the definition of $m_k$ 
from~\eqref{def:mk})
\begin{equation}
\begin{split}
\Delta(\bm{k}_1) =& \frac{m}{1-m^2} \frac{2}{\beta}\sum_{i=2}^{n_1} \left(m_{k_{11}+k_{1i}+1}\Delta\big(\bm{k}_1^{[1,i]}\big) +\frac{1}{N}\Delta\big(k_{11}+k_{1i}+1,\bm{k}_1^{[1,i]}\big) \right)\\
+& \frac{m}{1-m^2}\left(\bm{1}_{\beta=1}k_{11}m_{k_{11}+1}\Delta\big(\bm{k}_1^{[1]}\big) + \Delta\big(k_{11}-1, \bm{k}_1^{[1]}\big) + 2\sum_{l=1}^{k_{11}-1} m_{k_{11}+1-l} \Delta\big(l,\bm{k}^{[1]}\big)\right)\\
 +&\frac{m}{1-m^2}\left(\frac{1}{N}\sum_{l=1}^{k_{11}}\Delta\big(l,k_{11}+1-l,\bm{k}_1^{[1]}\big) + \bm{1}_{\beta=1} \frac{k_{11}}{N}\Delta\big(k_{11}+1,\bm{k}_1^{[1]}\big) - \underline{\mathrm{Tr}\left[WG^{k_{11}}\right]\Delta\big(\bm{k}^{[1]}_1\big)}\right).
\end{split}
\label{eq:Delta_bmk_expansion}
\end{equation}
We plug this identity into \eqref{eq:kappa_under_consideration} and bound all terms using \eqref{eq:def_Psi} apart from the ones containing the  underline term. Next, in Section~\ref{sec:hit_two_args} we apply Lemma~\ref{lem:underline_exp_general} to handle the underline and estimate all terms in the first line of \eqref{eq:underline_exp_general} using \eqref{eq:def_Psi}. In Section~\ref{sec:cum_exp_last_term} we further estimate the terms in the second line of \eqref{eq:underline_exp_general}, where all arguments of the cumulant in \eqref{eq:kappa_under_consideration} are differentiated out, in which case the cumulant is reduced to the expectation. These estimates do not involve the control parameters \eqref{eq:def_Psi} and rely purely on the local laws \eqref{eq:kG_av}--\eqref{eq:kG_iso}. Finally, in Section~\ref{sec:cum_exp_terms} we estimate the remaining terms in the second line of \eqref{eq:underline_exp_general} again using \eqref{eq:def_Psi}.

\subsubsection{Expansion within one argument of the cumulant}\label{sec:expansion_1arg} The proof of \eqref{eq:Delta_bmk_expansion} is based on the identity
\begin{equation}
G = m+ m\langle G-m\rangle G +\bm{1}_{\beta=1} \frac{m}{N}G^2 -m\underline{WG},
\label{eq:G_underline}
\end{equation}
which immediately follows from \eqref{eq:def_under_1}, $WG- zG =I$ and the relation $m^2 +mz+1=0$. This identity is used for the first $G$ in $G^k=G\cdot G^{k-1}$ with $k=k_{11}$, and then the underline is extended onto the entire power $G^{k}$ via \eqref{eq:def_under_1}:
\begin{equation}
\begin{split}
\Delta(k) =& \frac{m}{1-m^2}\left(\bm{1}_{\beta=1}km_{k+1} + \Delta(k-1) + 2\sum_{l=1}^{k-1} m_{k+1-l} \Delta(l)\right)\\
 +&\frac{m}{1-m^2}\left(\frac{1}{N}\sum_{l=1}^k\Delta(l,k+1-l) + \bm{1}_{\beta=1} \frac{k}{N}\Delta(k+1) - \mathrm{Tr}\left[\underline{WG^k}\right]\right).
\end{split}
\label{eq:Delta_k}
\end{equation}
To expand the renormalization further on the factors $\Delta(k_{1j})$ with $j\in [2,n_1]$, we observe that
\begin{equation}
\E\left[\mathrm{Tr}[WA]\mathrm{Tr}[WB]\right] = \frac{1}{N}\left(\mathrm{Tr}[AB] +\bm{1}_{\beta=1}\mathrm{Tr}[AB^\mathfrak{t}]\right),
\label{eq:EWW}
\end{equation}
for any $A,B\in\C^{N\times N}$ independent of $W$. Therefore,
\begin{equation}
\mathrm{Tr}\left[\underline{WG^k}\right] \Delta(l) = \underline{\mathrm{Tr}[WG^k]\Delta(l)}  -\frac{2}{\beta N} \mathrm{Tr}\left[G^{k+l+1}\right],
\label{eq:under_extension_basic}
\end{equation}
for any $k,l\in \N$. The remaining details of the proof of \eqref{eq:Delta_bmk_expansion} are analogous to \cite[Eq.(7.44)-(7.47)]{hyperuniformity} and thus are omitted.

Next, we substitute \eqref{eq:Delta_bmk_expansion} into \eqref{eq:kappa_under_consideration} and estimate the sum of all terms apart from the one containing the underline from above by 
\begin{equation}
\Psi^{(p)}(n,S-1)+\frac{1}{N}\Psi^{(p)}(n+1,S+1).
\label{eq:err_1_1}
\end{equation}
Here we additionally used that
\begin{equation}
| m_k|\lesssim 1\quad\text{and}\quad \left\vert 1-m^2\right\vert^{-1}\lesssim 1,
\end{equation}
in the global regime $\eta\sim 1$, for any fixed $k\in\N$.

\subsubsection{Expansion of the renormalization to the entire cumulant}\label{sec:hit_two_args}
Now we are left with the term
\begin{equation}
\kappa\left(\underline{\mathrm{Tr}\left[WG^{k_{11}}\right]\Delta\big(\bm{k}^{[1]}_1\big)}, \Delta\big(\bm{k}_2\big),\ldots, \Delta\big(\bm{k}_p\big)\right),
\end{equation}
which we expand further using Lemma~\ref{lem:underline_exp_general}. Due to \eqref{eq:EWW}, the terms corresponding to the first line of \eqref{eq:underline_exp_general} are of the form
\begin{equation}
\frac{1}{N}\kappa\left(\Delta\big(k_{11}+k_{ji}+1,\bm{k}_1^{[1]},\bm{k}_j^{[i]}\big),\{\Delta\big(\bm{k}_l\big)\}_{l\neq 1,j}\right)+ m^{[k_{11}+k_{ji}+1]}\kappa\left(\Delta\big(\bm{k}_1^{[1]},\bm{k}_j^{[i]}\big),\{\Delta\big(\bm{k}_l\big)\}_{l\neq 1,j}\right),
\end{equation}
for $j\in [2,p]$ and $i\in [n_j]$. Therefore, the sum of these terms has an upper bound of order
\begin{equation}
\frac{1}{N}\Psi^{(p-1)}(n-1,S+1)+\Psi^{(p-1)}(n-2,S-2).
\label{eq:err_1_2}
\end{equation}

\subsubsection{Analysis of the expectation in the second line of \eqref{eq:underline_exp_general}}\label{sec:cum_exp_last_term}
Consider the terms in the second line of \eqref{eq:underline_exp_general} with $\bm{\alpha}_j\neq\emptyset$ for all $j\in [2,p]$. These are the terms where derivatives hit every argument of the cumulant~\eqref{eq:kappa_under_consideration}, so the cumulant structure is erased and we are left with expectation:
\begin{equation}
\sum_{a,b=1}^N\frac{\kappa(ab,\bm{\alpha})}{(\ell-1)!N^{\ell/2}}\E\bigg[\partial_{\bm{\alpha}_1}\Big[\left(G^{k_{11}}\right)_{ba}\Delta\big(\bm{k}_1^{[1]}\big)\Big] \prod_{j=2}^p \partial_{\bm{\alpha}_j}\Delta\big(\bm{k}_j\big)\bigg].
\label{eq:cum_exp_error_term}
\end{equation}
We do not estimate~\eqref{eq:cum_exp_error_term} in terms of the control parameters~\eqref{eq:def_Psi}, but instead bound it directly by
\begin{align}
|\eqref{eq:cum_exp_error_term},a=b|&\lesssim N^{-(\ell-2)/2+\xi}\le N^{-(p-2)/2+\xi},\label{eq:cum_exp_error_term_bound_d}\\
|\eqref{eq:cum_exp_error_term},a\neq b|&\lesssim N^{-(p-2)+\xi}.
\label{eq:cum_exp_error_term_bound_od}
\end{align}
Here by $a=b$ and $a\neq b$ we indicated which terms are considered in the summation in \eqref{eq:cum_exp_error_term}.

To verify~\eqref{eq:cum_exp_error_term_bound_d}, we bound all derivatives in \eqref{eq:cum_exp_error_term} by $N^\xi$ by the means of~\eqref{eq:kG_av}--\eqref{eq:kG_iso}, and take into account that there are $N$ pairs of indices $(a,b)$ with $a=b$. This implies\footnote{Strictly speaking we obtain some $N^{C\xi}$ factor instead of $N^\xi$, but since
$\xi>0$ is arbitrary when we estimated the derivatives by $N^\xi$, 
this constant $C$ is irrelevant as it can be removed by reducing the small exponent $\xi>0$.
We will indicate all such factors generically by $N^\xi$.}  the first bound in \eqref{eq:cum_exp_error_term_bound_d}. To obtain the second one, it suffices to notice that $\ell\ge p$, since $\mathfrak{n}(\bm{\alpha}_j)\ge 1$ for all $j\in [2,p]$.

The proof of \eqref{eq:cum_exp_error_term_bound_od} relies on the following observation: for any $s\in \N$, $\bm{\alpha}\in\{ab,ba\}^s$ with $a\neq b$ and a multi-index $\bm{k}$ it holds that
\begin{equation}
N^{-s/2} \left\vert \partial_{\bm{\alpha}} \Delta(\bm{k})\right\vert \lesssim N^{-1+\xi}.
\label{eq:final_term_trick1} 
\end{equation}
For $s\ge 2$, \eqref{eq:final_term_trick1} directly follows from \eqref{eq:kG_av}--\eqref{eq:kG_iso}, while for $s=1$ on top of $N^{-s/2}$ we gain an additional factor $N^{-1/2}$ from an off-diagonal entry due to~\eqref{eq:kG_iso}. Similarly to \eqref{eq:final_term_trick1}, we get that
\begin{equation}
N^{-\mathfrak{n}(\bm{\alpha}_1)/2}\left\vert\partial_{\bm{\alpha}_1}\Big[\left(G^{k_{11}}\right)_{ba}\Delta\big(\bm{k}_1^{[1]}\big)\Big]\right\vert \lesssim N^{-1/2+\xi}.
\label{eq:alpha1_bound}
\end{equation}
Combining this bound with \eqref{eq:final_term_trick1} for $\bm{\alpha}=\bm{\alpha}_j$, $\bm{k}=\bm{k}_j$, $j\in [2,n]$, and recalling that $\ell= 1+\mathfrak{n}(\bm{\alpha}_1)+\ldots+\mathfrak{n}(\bm{\alpha}_p)$, we obtain~\eqref{eq:cum_exp_error_term_bound_od}.

\subsubsection{Analysis of the second and higher order cumulants in the second line of \eqref{eq:underline_exp_general}}\label{sec:cum_exp_terms}

Now we consider the terms in the second line of \eqref{eq:underline_exp_general} where there is at least one $j\in [2,p]$ such that $\bm{\alpha}_j=\emptyset$. We bound each of these terms by its absolute value, estimate $|\kappa(ab,\bm{\alpha})|\lesssim 1$, and arrive to the sum
\begin{equation}
N^{-\ell/2}\sum_{a,b=1}^N\Bigg|\kappa\Bigg(\bigg(\partial_{\bm{\alpha}_1}\Big[\left(G^{k_{11}}\right)_{ba}\Delta\big(\bm{k}_1^{[1]}\big)\Big] \prod_{\bm{\alpha}_j\neq \emptyset} \partial_{\bm{\alpha}_j}\Delta\big(\bm{k}_j\big)\bigg),\left\lbrace \Delta\big(\bm{k}_j\big)\right\rbrace_{\bm{\alpha}_j=\emptyset}\Bigg)\Bigg|.
\label{eq:cum_intermediate}
\end{equation}
Here $j$ takes values in $[2,p]$. 

First, we estimate the terms in \eqref{eq:cum_intermediate} with $\ell\ge L^{(p)}(S)=p+S+4$. These estimates rely purely on the local laws \eqref{eq:kG_av}--\eqref{eq:kG_iso} and their immediate corollaries, and do not involve control parameters \eqref{eq:def_Psi}. By \eqref{eq:Delta_a_priori} it holds that $|\Delta(\bm{k}_j)|\lesssim N^\xi$ with very high probability, which we use for $\bm{\alpha}_j=\emptyset$. Similarly to \eqref{eq:final_term_trick1} and \eqref{eq:alpha1_bound} we get that each of the factors in \eqref{eq:cum_intermediate} containing a derivative has an upper bound of order $N^\xi$. Therefore, \ref{eq:cum_intermediate} is bounded by
\begin{equation}
N^{-\ell/2} N^2 N^{C \xi} \lesssim N^{-p/2}\quad \text{for}\quad \ell\ge L^{(p)}(S).
\label{eq:high_order_term}
\end{equation}
Note that \eqref{eq:high_order_term} even holds under the weaker constraint $\ell\ge p+4$. The term $S$ was included into the definition of $L^{(p)}(S)$ only to unify the estimates with the ones in Sections~\ref{sec:off_diag_expansion} and~\ref{sec:diag_expansion} below. 

The terms in \eqref{eq:cum_intermediate} with $\ell<L^{(p)}(S)=:L$ require a more delicate treatment. \nc We will estimate each of the cumulants in \eqref{eq:cum_intermediate} by the appropriate control parameter from \eqref{eq:def_Psi} (or a sum of several $\Psi$'s). We perform differentiation in $\bm{\alpha}_j$ in \eqref{eq:cum_intermediate} for all $j\in [1,p]$, and obtain an $N$-independent number of terms for each $a,b\in [N]$.  Each of these terms is associated to a certain allocation of derivatives over resolvents according to the Leibniz rule. Since the number of these allocations does not depend on $N$, it suffices to show that for each of them the sum over $a,b$ in \eqref{eq:cum_intermediate} is bounded by the rhs. of \eqref{eq:global_master}. We further treat the allocation of derivatives as fixed, even if it is not mentioned explicitly.

Denote $r:=|\{j\in [2,p]:\bm{\alpha}_j\neq \emptyset\}|$, so that the cumulants in \eqref{eq:cum_intermediate} have $p-r$ arguments. The number of traces remaining after the differentiation is at most $n-1-r$ (recall that one trace is replaced by $(G^{k_{11}})_{ba}$). Let $s\in\N\cup\{0\}$ be such that this number equals to $n-1-r-s$. Next, the total number of derivatives in \eqref{eq:cum_intermediate} equals to $\ell-1$, we have that $t:=\ell-1-r-s\ge 0$. Thus,
\begin{equation}
\ell=r+s+t+1.
\label{eq:ell_rst}
\end{equation}
Since \eqref{eq:cum_intermediate} does not depend on the order in which derivatives are taken, it is convenient to differentiate in the following order. First, we choose $r+s$ traces in the first argument of the cumulant in \eqref{eq:cum_intermediate} in such a way that at least one trace is chosen from $\Delta(\bm{k}_j)$ for each $j\in [2,p]$. We differentiate each of these $r+s$ traces once and $n-r-s-1$ traces remain. Afterwards, we perform the remaining $t$ differentiations by hitting only resolvent entries, but not traces.

Now we estimate \eqref{eq:cum_intermediate} in terms of \eqref{eq:def_Psi} and distinguish between the following cases. 

\smallskip

\noindent\underline{$a\neq b$, $r\ge 1$.} We claim that the corresponding sum in \eqref{eq:cum_intermediate} has an upper bound of order \nc
\begin{equation}
\sum_{r=1}^{p-2}\frac{1}{N^{r-1}} \Psi^{(p-r)}(n-r-1,S+2L).
\label{eq:cum_intermediate1}
\end{equation}
 Here the factor $N^{-(r-1)}$ emerges from a combination of three effects. First, the number of terms in the summation in \eqref{eq:cum_intermediate} is of order $N^2$. Next, we take into account the factor $N^{-\ell/2}$. Finally, and most importantly, we gain $N^{-1/2}$ from each off-diagonal entry arising after differentiation due to the normalization in \eqref{eq:def_Iso} (but not due to the local law~\eqref{eq:kG_iso}). Since each differentiation of a trace creates an off-diagonal entry, and each differentiation of a resolvent entry destroys at most one, the number of off-diagonal entries is bounded from below by  $r+s+1-t$ (this number may be even negative). We bound $s\ge 0$, combine the three effects and recall the definition of $\Psi$ from \eqref{eq:def_Psi}. To complete the proof of \eqref{eq:cum_intermediate1} it remains to observe that each derivative increases the number of resolvents by one and the number of diagonal entries in the product at most by one. This explains why $\Psi$ arises in \eqref{eq:cum_intermediate1} with the second argument equal to $S+2L$.

\smallskip

\noindent\underline{$a\neq b$, $r=0$.} Though the bound \eqref{eq:cum_intermediate1} is valid also for $r=0$, it is not sufficiently precise for the proof of Proposition~\ref{prop:global_master}. Thus, we argue differently. If $\ell\ge 4$, we do not need to gain from off-diagonal entries, and simply bound the sum of the corresponding terms in \eqref{eq:cum_intermediate} by
\begin{equation}
\Psi^{(p)}(n-1,S+2L).
\label{eq:cum_intermediate2}
\end{equation}
If $\ell=3$, then there is at least one off-diagonal entry in \eqref{eq:cum_intermediate}, since index $a$ appears exactly three times in the entries obtained after differentiation (and similarly for $b$). Therefore, we gain at least one factor $N^{-1/2}$ and again obtain the upper bound of order \eqref{eq:cum_intermediate2}.

\smallskip

\noindent\underline{$a=b$, $r\ge 1$.} Only diagonal entries arise after differentiation in $w_{aa}$, thus we do not gain from off-diagonality. We simply estimate $\ell\ge r+1$ and take into account that the number of pairs $(a,b)$ with $a=b$ equals to $N$, arriving at the bound
\begin{equation}
\sum_{r=1}^{p-2}\frac{1}{N^{(r-1)/2}} \Psi^{(p-r)}(n-r-1,S+2L).
\label{eq:cum_intermediate3}
\end{equation}

\smallskip

\noindent\underline{$a=b$, $r=0$.} Estimating $\ell\ge 3$, we get the bound of order \eqref{eq:cum_intermediate2} with an additional small factor $N^{-1/2}$.

\smallskip

Collecting the estimates proved in Section~\ref{sec:tracial_expansion}, we conclude that the absolute value of \eqref{eq:kappa_under_consideration} is bounded by the rhs. of \eqref{eq:global_master} up to some $N$-independent multiplicative constant.\nc

\subsection{Proof of Proposition~\ref{prop:global_master}: expansion with respect to an off-diagonal entry}\label{sec:off_diag_expansion}

In this section we prove an upper bound on \eqref{eq:extended_family_l=0} in the case when $\bm{k}_0\neq\emptyset$ and $\mathrm{od}(\bm{\phi})>0$. We write $\bm{k}_0=(k_{0i})_{i=1}^{n_0}$ and $\bm{\phi}=(\phi_i)_{i=1}^{n_0}$ for $n_0:=\mathfrak{n}(\bm{k}_0)$. Without loss of generality one may assume that $\phi_1$ is off-diagonal. Instead of \eqref{eq:Delta_k}, we start with the expansion of $(G^k)_{\phi}$ for $k=k_{01}$ and $\phi=\phi_1$ within the $\mathrm{Iso}(\bm{k}_0,\bm{\phi})$ factor in~\eqref{eq:extended_family_l=0}:
\begin{equation}
\begin{split}
(G^k)_\phi =& m\left( (G^{k-1})_\phi + \sum_{l=1}^{k-1} m_{k+1-l} (G^{l})_\phi\right)\\
 +&\frac{m}{N} \left( \bm{1}_{\beta=1} k (G^{k+1})_\phi + \sum_{l=1}^{k-1} \Delta(k+1-l) (G^{l})_\phi\right) - m\left(\underline{WG^k}\right)_\phi.
\end{split}
\label{eq:iso_G_expansion}
\end{equation}
This identity is a direct consequence of \eqref{eq:G_underline} and \eqref{eq:def_under_1}. From \eqref{eq:EWW} we can extend the underline as follows:
\begin{align}
(\underline{WG^k})_{ab}\Delta(l)= &\frac{2l}{\beta N} (G^{k+l+1})_{ab} - \underline{(WG^k)_{ab}\Delta(l)},\label{eq:case2_ext1}\\
(\underline{WG^k})_{ab} (G^l)_{cd} = & \sum_{r=1}^l \left((G^{l+1-s})_{ad}(G^{k+s})_{cb} + \bm{1}_{\beta=1}(G^{l+1-s})_{ac}(G^{k+s})_{db}\right) - \underline{(WG^k)_{ab}(G^l)_{cd}},\label{eq:case2_ext2}
\end{align}
for any $k,l\in \mathbf{N}$ and $a,b,c,d\in [N]$. Using these inputs, we get that the analogues of the bounds in \eqref{eq:err_1_1} and \eqref{eq:err_1_2} are
\begin{equation}
\frac{1}{\sqrt{N}}\Psi^{(p)}(n,S) + \Psi^{(p)}(n,S-1) + \frac{1}{N}\Psi^{(p)}(n+1,S+1)\quad \text{and}\quad \frac{1}{N}\Psi^{(p-1)}(n-1,S+1).
\label{eq:case2_ext_terms}
\end{equation}
The control parameter $N^{-1/2}\Psi^{(p)}(n,S)$ which is new compared to \eqref{eq:err_1_1} emerges as an upper bound on the term where the underline is extended to an off-diagonal entry in the first argument of \eqref{eq:extended_family} and a diagonal entry arises from \eqref{eq:case2_ext2}. For example, this could happen for $d=a$, $c\neq b$. If both resulting entries are diagonal, then the number of off-diagonal entries is decreased by 2 and the corresponding cumulant is bounded by $\Psi^{(p)}(n,S-1)$.

\subsubsection{Analysis of the expectation in the second line of \eqref{eq:underline_exp_general}}\label{sec:cum_exp_last_term2}

Now we adapt the analysis performed in Section~\ref{sec:cum_exp_last_term} to the current case. Let us write $\phi_1$ as $ab$, $a\neq b$. Instead of \eqref{eq:cum_exp_error_term} we have the following sum:
\begin{equation}
\sum_{c=1}^N\frac{\kappa(ac,\bm{\alpha})}{(\ell-1)!N^{\ell/2}}\E\bigg[\partial_{\bm{\alpha}_1}\Big[N^{1/2}\left(G^{k_{01}}\right)_{cb}\mathrm{Iso}\big(\bm{k}_0^{[1]},\bm{\phi}^{[1]}\big)\Delta\big(\bm{k}_1\big)\Big] \prod_{j=2}^p \partial_{\bm{\alpha}_j}\Delta\big(\bm{k}_j\big)\bigg],
\label{eq:cum_exp_error_term_case2}
\end{equation}
where $\bm{\alpha}\in\{ac,ca\}^{\ell-1}$. We claim that
\begin{equation}
\left\vert \eqref{eq:cum_exp_error_term_case2}\right\vert \lesssim  N^{-(p-2)/2+\xi}\left(N^{-1} + \bm{1}_{n\ge p}\right).
\label{eq:cum_exp_error_term_case2_bound}
\end{equation}

To prove~\eqref{eq:cum_exp_error_term_case2_bound}, we show in addition to~\eqref{eq:final_term_trick1} how to estimate derivatives of diagonal and off-diagonal resolvent entries. For any $s\in \N\cup\{0\}$,  $a,c,d,e\in[N]$ with $d\neq e$, $\bm{\alpha}\in\{ac,ca\}^s$ and $k\in\N$ it holds that
\begin{equation}
\begin{split}
N^{-s/2} \left\vert \partial_{\bm{\alpha}}(G^k)_{dd}\right\vert &\lesssim N^\xi\left( \bm{1}_{s=0}+N^{-1/2}\right),\\
N^{-s/2} \left\vert \partial_{\bm{\alpha}}N^{1/2}(G^k)_{de}\right\vert &\lesssim N^\xi\left( \bm{1}_{s=0}+\bm{1}_{a,c\in\{d,e\}}+N^{-1/2}\right).
\end{split}
\label{eq:final_term_trick2}
\end{equation} 
The proof of~\eqref{eq:final_term_trick2} is analogous to that of~\eqref{eq:final_term_trick1} and thus is omitted. Now 
for the proof of~\eqref{eq:cum_exp_error_term_case2_bound} we consider several cases depending on the value of $c$ in~\eqref{eq:cum_exp_error_term_case2}.

\smallskip

\noindent\underline{$c=b$.} Since $a\neq b$, we get that $a\neq c$. Applying~\eqref{eq:final_term_trick1} to the derivatives of $\Delta(\bm{k}_j)$, $j\in [2,p]$, we gain the factor $N^{-(p-1)}$. Next, we use~\eqref{eq:final_term_trick1} and the worst scenario $N^\xi$ term in the rhs. of~\eqref{eq:final_term_trick2} to bound the derivative in~$\bm{\alpha}_1$:
\begin{equation}
N^{-\bm{\alpha_1}/2}\left\vert\partial_{\bm{\alpha}_1}\Big[N^{1/2}\left(G^{k_{01}}\right)_{cb}\mathrm{Iso}\big(\bm{k}_0^{[1]},\bm{\phi}^{[1]}\big)\Delta\big(\bm{k}_1\big)\Big]\right\vert \lesssim N^{1/2+\xi}.
\end{equation}
Here the factor $N^{1/2}$ arises from the fact that the entry $\left(G^{k_{01}}\right)_{cb}$ is now diagonal and does not require the $N^{1/2}$ normalization factor. Collecting these inputs, we obtain
\begin{equation}
\left\vert\text{term in }\eqref{eq:cum_exp_error_term_case2}\text{ with }c=b\right\vert \lesssim N^{-(p-1)+\xi}\le N^{-1}N^{-(p-2)/2+\xi}\quad\text{for}\quad p\ge 2.
\label{eq:c=b_exp}
\end{equation}

\smallskip

\noindent\underline{$c=a$.} Similarly to~\eqref{eq:final_term_trick1} we have that
\begin{equation}
N^{-s}\left\vert\partial_{aa}^s \Delta(k)\right\vert \lesssim N^{-s},\quad \forall s\in\N\cup\{0\},\, k\in \N.
\end{equation}
Combining this bound with \eqref{eq:final_term_trick2}, where the rhs. is again trivially estimated by $N^\xi$, we get
\begin{equation}
\left\vert\text{term in }\eqref{eq:cum_exp_error_term_case2}\text{ with }c=a\right\vert \lesssim N^{-p/2+\xi}= N^{-1}N^{-(p-2)/2+\xi}.
\end{equation}

\smallskip

\noindent\underline{$c\neq a,b$ and $c$ appears in $\bm{\phi}$.} By the latter condition we meant that $c$ is one of the two indices in $\phi_i$ for some $\phi_i$ in $\bm{\phi}$. Using \eqref{eq:final_term_trick1}, \eqref{eq:final_term_trick2} with the rhs. estimated by $N^\xi$ and observing that the number of indices $c$ in $\bm{\phi}$ does not depend on $N$, we obtain the same bound as in~\eqref{eq:c=b_exp} (actually, we gain an additional factor $N^{-1/2}$, but this is not needed for our purposes). 

\smallskip

\noindent\underline{$c$ does not appear in $\bm{\phi}$.} In this case we eventually use~\eqref{eq:final_term_trick2} with $N^{-1/2+\xi}$ in the rhs. for $s>0$. Together with \eqref{eq:final_term_trick1}, this gives
\begin{equation}
|\eqref{eq:cum_exp_error_term_case2}, \text{sum over } c\text{ not in } \bm{\phi}|\lesssim  N \cdot N^{-1/2} N^{-\mathfrak{n}(\bm{\alpha}_1)/2} N^{-(p-1)+\xi}.
\label{eq:final_term_trick2_appl}
\end{equation}
where the first factor $N$ comes from the number of indices of summation, $N^{-1/2}$ is gained using that $\ell-\mathfrak{n}(\bm{\alpha}_1)-\ldots-\mathfrak{n}(\bm{\alpha}_p)=1$, $N^{-\mathfrak{n}(\bm{\alpha}_1)/2}$ arises from \eqref{eq:final_term_trick1}, \eqref{eq:final_term_trick2}, and $N^{-(p-1)}$ from \eqref{eq:final_term_trick1}. In most of the cases, \eqref{eq:final_term_trick2_appl} implies~\eqref{eq:cum_exp_error_term_case2_bound}. Specifically, it remains to consider the case when $\bm{\alpha}_1=\emptyset$ (i.e. there is no gain from $\mathfrak{n}(\bm{\alpha}_1)$ in \eqref{eq:final_term_trick2_appl}), $p=2$ and $n=1$. In this case \eqref{eq:cum_exp_error_term_case2} simplifies to
\begin{equation}
\sum_{c=1}^N\frac{\kappa(ac,\bm{\alpha})}{(\ell-1)!N^{\ell/2}}\E\bigg[N^{1/2}\left(G^{k_{01}}\right)_{cb}\mathrm{Iso}\big(\bm{k}_0^{[1]},\bm{\phi}^{[1]}\big) \partial_{\bm{\alpha}}\Delta(k)\bigg],\qquad \bm{\alpha}\in \{ac,ca\}^{\ell-1}.
\label{eq:before_iso_resum}
\end{equation}
In particular, $\Delta(k)$ is a single trace but not a product of several ones. The target bound~\eqref{eq:cum_exp_error_term_case2_bound} is equivalent to the upper bound of order $N^{-1+\xi}$ on the absolute value of~\eqref{eq:before_iso_resum}.

We consider only the terms with $c\neq a$, since the complementary case is covered above. For $\ell\ge 4$ we estimate the expectation in \eqref{eq:before_iso_resum} by $N^\xi$ by the means of \eqref{eq:kG_av}--\eqref{eq:kG_iso} and obtain the desired bound. The case $\ell=3$ is more subtle. Since $\mathfrak{n}(\bm{\alpha})=\ell-1=2$, $\partial_{\bm{\alpha}}\Delta(k)$ is a sum of products of two resolvent entries which are simultaneously either off-diagonal or diagonal. If they are off-diagonal, we gain a factor $N^{-1/2}$ from each of them due to \eqref{eq:kG_iso}, which is sufficient for our purpose. When both of these entries are diagonal, \eqref{eq:before_iso_resum} becomes of the form
\begin{equation}
\sum_{c\in [N]\setminus\{a\}}\frac{\kappa_{ac}}{2N^{3/2}}\E\bigg[N^{1/2}\left(G^{k_{01}}\right)_{cb}\mathrm{Iso}\big(\bm{k}_0^{[1]},\bm{\phi}^{[1]}\big) (G^u)_{cc}(G^{k+2-u})_{aa}\bigg],
\label{eq:before_iso_resum1}
\end{equation}
where $u\in [k+1]$ and $\kappa_{ac}$ equals either to $\kappa(ac,ac,ac)$ or to $\kappa(ac,ca,ca)$. Decompose $(G^u)_{cc}$ into the deterministic approximation and the fluctuation around it: $(G^u)_{cc}= m_u + (G^u-m_u)_{cc}$. In the terms with fluctuation we gain $N^{-1/2}$ from \eqref{eq:kG_iso} and obtain the upper bound of order $N^{-1+\xi}$ on the absolute value of~\eqref{eq:before_iso_resum1}. Finally, in the terms with $m_u$ we perform summation over $c$:
\begin{equation}
\left\vert \sum_{c\in [N]\setminus\{a\}} \kappa_{ac} \left(G^{k_{01}}\right)_{cb}\right\vert =\left\vert \left(G^{k_{01}}\right)_{\bm{v}b}\right\vert \prec N^{-1/2}\|\bm{v}\|\lesssim 1.
\label{eq:iso_resum}
\end{equation}
Here $\bm{v}_c =\delta_{ac}\kappa_{ac}$ for $c\in [N]$, and in the last but one bound we used~\eqref{eq:kG_iso}. This finishes the proof of~\eqref{eq:cum_exp_error_term_case2_bound}.

\subsubsection{Analysis of the second and higher order cumulants in the second line of \eqref{eq:underline_exp_general}}\label{sec:cum_exp_terms2}
Similarly to Section~\ref{sec:cum_exp_terms} we consider the terms
\begin{equation}
N^{-\ell/2}\sum_{c=1}^N\Bigg|\kappa\Bigg(\bigg(\partial_{\bm{\alpha}_1}\Big[N^{1/2}\left(G^{k_{01}}\right)_{cb}\mathrm{Iso}\big(\bm{k}_0^{[1]},\bm{\phi}^{[1]}\big)\Delta\big(\bm{k}_1\big)\Big] \prod_{\bm{\alpha}_j\neq \emptyset} \partial_{\bm{\alpha}_j}\Delta\big(\bm{k}_j\big)\bigg),\left\lbrace \Delta\big(\bm{k}_j\big)\right\rbrace_{\bm{\alpha}_j=\emptyset}\Bigg)\Bigg|,
\label{eq:cum_intermediate_case2}
\end{equation}
where $\bm{\alpha}_j=\emptyset$ for at least one $j\in [2,p]$.  The terms with $\ell\ge L^{(p)}(S)$ are estimated similarly to the argument around \eqref{eq:high_order_term}, so we further focus on the complementary case $\ell<L^{(p)}(S)$ and abbreviate $L:=L^{(p)}(S)$. \nc Recall the notations $r,s$ and $t$ introduced in Section~\ref{sec:cum_exp_terms}. We note that after the differentiation in \eqref{eq:cum_intermediate_case2} is performed, the number of remaining traces equals to $n-r-s$, but not to $n-r-s-1$ as in Section~\ref{sec:cum_exp_terms}. Consider the following cases.

\smallskip

\noindent\underline{$c\neq a,b$; $r\ge 1$.} Similarly to the argument around \eqref{eq:cum_intermediate1}, we estimate the difference between the number of off-diagonal entries after differentiation and originally in $\mathrm{Iso}(\bm{k}_0,\bm{\phi})$ from below by $r+s-t$, and instead of \eqref{eq:cum_intermediate1} obtain 
\begin{equation}
\sum_{r=1}^{p-2}\frac{1}{N^{r-1/2}} \Psi^{(p-r)}(n-r,S+2L).
\label{eq:cum_intermediate_case2_1}
\end{equation}

\noindent\underline{$c\neq a,b$; $r=0$; $s\ge 1$.} The argument presented in the previous case gives the bound of order \eqref{eq:cum_intermediate2} times the small factor $N^{-1/2}$.

\smallskip

\noindent\underline{$c$ does not appear in $\bm{\phi}$; $r=s=0$.} Observe that the number of off-diagonal entries does not decrease. Indeed, for any $(e,f)\in\bm{\phi}$, $e\neq f$, $k\in\N$ and $\bm{\alpha}\in\{ac,ca\}^l$, $l\in\N$, we have that $\partial_{\bm{\alpha}} (G^k)_{ef}$ contains at least one off-diagonal entry, since $e$ and $f$ cannot simultaneously equal to $a$. Estimating additionally $\ell\ge 3$, we obtain the bound
\begin{equation}
N^{-1/2}\Psi^{(p)}(n,S+2L).
\label{eq:cum_intermediate_case2_2}
\end{equation}

\smallskip

\noindent\underline{$c$ appears in $\bm{\phi}$; $c\neq a,b$; $r=s=0$.} The number of off-diagonal entries decreases at most by $t=\ell-1$. Since the number of $c$'s under consideration does not depend on $N$, we again get the bound of order  \eqref{eq:cum_intermediate_case2_2}.

\smallskip

\noindent\underline{$c=a$; $r\ge 0$.} Since all derivatives are in the diagonal entry $w_{aa}$, the number of off-diagonal entries does not decrease. Estimating $\ell\ge r+1$, we get
\begin{equation}
\sum_{r=0}^{p-2} \frac{1}{N^{(r+1)/2}} \Psi^{(p-r)}(n-r,S+2L).
\label{eq:cum_intermediate_case2_3}
\end{equation}

\smallskip

\noindent\underline{$c=b$; $r\ge 1$.} The number of off-diagonal entries increases at least by $r+s-t-1$, where $1$ is subtracted because of $(G^{k_{10}})_{cb}=(G^{k_{10}})_{bb}$. Together with \eqref{eq:ell_rst} this gives
\begin{equation}
\sum_{r=1}^{p-1} \frac{1}{N^r} \Psi^{(p-r)}(n-r,S+2L).
\label{eq:cum_intermediate_case2_4}
\end{equation}

\smallskip

\noindent\underline{$c=b$; $r=0$; $s\ge 1$.} The argument presented in the previous case gives the bound of order \eqref{eq:cum_intermediate2} times the small factor $N^{-1/2}$.

\smallskip

\noindent\underline{$c=b$; $r=s=0$.} We claim that the sum of corresponding terms in \eqref{eq:cum_intermediate_case2} has an upper bound of order
\begin{equation}
N^{-1/2}\Psi^{(p)}(n,S+2L) + \Psi^{(p)}(n,S-1).
\label{eq:cum_intermediate_case2_5}
\end{equation}
Indeed, the number of off-diagonal entries can decrease at most by $\ell$. If it decreased by at most $\ell-1$, then the associated terms are bounded by the first term in \eqref{eq:cum_intermediate_case2_5}. Otherwise, by the second, since the total number of resolvents increases at most by $\ell-1$. 

\smallskip

 Collecting the estimates proved in Section~\ref{sec:off_diag_expansion}, we conclude that the absolute value of \eqref{eq:extended_family_l=0} with ${\mathrm{od}}(\bm{\phi})>0$ is bounded by the rhs. of \eqref{eq:global_master} up to some $N$-independent multiplicative constant.\nc

\subsection{Proof of Proposition~\ref{prop:global_master}: expansion with respect to a diagonal entry}\label{sec:diag_expansion}

Now we consider the remaining case where $\bm{k}_0\neq\emptyset$, ${\rm{od}}(\bm{\phi})=0$ and perform the underline expansion with respect to a diagonal entry. The analysis is almost identical to the one in Section~\ref{sec:off_diag_expansion}, so we only outline the required adjustments. We start with the identity~\eqref{eq:iso_G_expansion}, which holds without any changes for diagonal $\phi$. Next, in \eqref{eq:cum_exp_error_term_case2} index $b$ is equal to $a$ and the factor $N^{1/2}$ by $(G^{k_{01}})_{ca}$ is absent. We claim that the bound \eqref{eq:cum_exp_error_term_case2_bound} is unchanged. Additional attention is required only in the case $c=a$, where we simply estimate the analogue of the expectation in \eqref{eq:cum_exp_error_term_case2} by $N^\xi$ and bound $N^{-\ell/2}\le N^{-p/2}$. For the second and higher order cumulants in the second line of \eqref{eq:underline_exp_general} one needs to take care of $c=a$, but then the argument is identical to the one in the case $c=a$ in Section~\ref{sec:cum_exp_terms2}.

Combining the estimates from Sections~\ref{sec:tracial_expansion}--\ref{sec:diag_expansion}, we finish the proof of Proposition~\ref{prop:global_master}.\hfill\qed

\subsection{Proof of Proposition~\ref{prop:global}}\label{sec:proof_global} We may assume that $p\ge 1$, since in the complementary case one may use that $\kappa_{0,q}(\Delta(1)) = \overline{\kappa_{q,0}(\Delta(1))}$. Along the proof we suppress the argument $S$ in $\Psi^{(p)}(n,S)$, since none of the estimates depends on this parameter. We revisit the static expansion presented in Section~\ref{sec:tracial_expansion} and apply it for $\kappa_{p,q}(\Delta(1))$. In this case, each of the parameters $p,n$ and $S$ in Section~\ref{sec:tracial_expansion} should be replaced by $p+q$. As we will now see, most of the terms in the expansion can be absorbed into the error term in~\eqref{eq:kappapq_new} in the global regime. We start with the terms discussed in Sections~\ref{sec:expansion_1arg}--\ref{sec:hit_two_args}. By \eqref{eq:err_1_1} and \eqref{eq:err_1_2} they have an upper bound of order
\begin{equation}
N^{-1}\Psi^{(p+q)}(p+q+1) + N^{-1}\Psi^{(p+q-1)}(p+q-1) \lesssim N^{-1/2+\xi} N^{-(p+q-2)/2},
\end{equation}
where in the last estimate we used Proposition~\ref{prop:global_Psi}. Additionally, we removed the first term from~\eqref{eq:err_1_1}, since it originated from the terms in the first line of~\eqref{eq:Delta_k}. The second and the third of these terms are not present for $\kappa_{p,q}(\Delta(1))$, while the first one does not contribute in general to the expansion. Similarly, we removed the second term in~\eqref{eq:err_1_2}. 

Next, the terms in the expansion discussed in Section~\ref{sec:cum_exp_terms} are bounded by
\begin{equation}
\sum_{r=1}^{p+q-2}N^{-(r-1)/2} \Psi^{(p+q-r)}(p+q-r-1) + \Psi^{(p+q)}(p+q-1) \lesssim N^{-1/2+\xi} N^{-(p+q-2)/2},
\label{eq:global_pq_error_term}
\end{equation}
as it follows from \eqref{eq:cum_intermediate1}, \eqref{eq:cum_intermediate2} and \eqref{eq:cum_intermediate3}. Here we used Proposition~\ref{prop:global_Psi} in the last estimate and observed that the characteristic function in the rhs. of~\eqref{eq:Psi_improved} vanishes for each of the terms in~\eqref{eq:global_pq_error_term}. We are left with the terms~\eqref{eq:cum_exp_error_term} discussed in Section~\ref{sec:cum_exp_last_term}. By~\eqref{eq:cum_exp_error_term_bound_d}--\eqref{eq:cum_exp_error_term_bound_od} all of them can be absorbed into the error term in the rhs. of~\eqref{eq:kappapq_new} apart from the ones with $a=b$ and $\ell=p+q$. Thus, we get
\begin{equation}
\kappa_{p,q}(\Delta(1)) = \frac{1}{N^{(p+q-2)/2}}\left(-\frac{m}{1-m^2}\sum_{a=1}^N\frac{\kappa_{p+q}^{\dif}}{N}\E\left[G_{aa} \left(\partial_{aa}\Delta(1)\right)^{p-1}\left(\partial_{aa}\overline{\Delta(1)}\right)^{q}\right] + \mathcal{O}\left(N^{-1/2+\xi}\right)\!\right)\!.
\label{eq:global_leading_term_prefin}
\end{equation}
The factor $(\ell-1)!$ in \eqref{eq:cum_exp_error_term} canceled out with the number of splittings of $\bm{\alpha}=\{aa\}^{\ell-1}$ into $\ell-1$ labeled blocks of unit size. Using~\eqref{eq:kG_iso}, we compute
\begin{equation}
\partial_{aa}\Delta(1) = - (G^2)_{aa} = -m_2 -(G^2-m_2)_{aa} = -\frac{m^2}{1-m^2} + \mathcal{O}\left(N^{-1/2+\xi}\right),
\label{eq:Delta_dif_aa}
\end{equation}
with the bound on the error term valid with very high probability. Combining \eqref{eq:global_leading_term_prefin} with \eqref{eq:Delta_dif_aa} and performing the same argument as in \eqref{eq:Delta_dif_aa} for $G_{aa}$, we complete the proof of Proposition~\ref{prop:global}.\hfill$\qed$

\section{Static chaos expansion: cancellation for GUE}\label{sec:Gauss}

In this section we consider $W$ with Gaussian entries and prove Proposition~\ref{prop:E_Gauss} via static method. First, in Section~\ref{sec:E_expansion} we prove the existence of expansion of $\E(\bm{k},\bm{l})$ in the negative powers of $N\eta$ both for GUE and GOE, and derive recursive relations for the coefficients of this expansion. Next, in Section~\ref{sec:E_cancel} we focus on the GUE case and identify the major cancellation in the expansion by showing that all orders starting from the first one vanish.

\subsection{Full $(N\eta)$-expansion of expectation}\label{sec:E_expansion} For $z=E+\ii\eta$ with $\eta>0$ denote for short
\begin{equation}
\mathfrak{a}:=m/(1-m^2)\quad \text{and}\quad \mathfrak{a}_0:=m_0/(1-m^2_0),\,\,\, m_0:=m(E+\ii 0).
\label{eq:def_a} 
\end{equation}
We show that $\E(\bm{k},\bm{l})$ admits a full expansion in negative powers of $N\eta$, up to the error term of order $\eta$.
\begin{lemma}\label{lem:E_full_expansion} Let $W$ be an $N\times N$ GOE/GUE matrix. For any fixed $D\in\N$, $n_1,n_2\in\N\cup\{0\}$ and multi-indices $\bm{k}\in \N^{n_1},\bm{l}\in\N^{n_2}$, it holds that
\begin{equation}
\E\left(\bm{k},\bm{l}\right) = \sum_{d=0}^{D-1} \frac{1}{(N\eta)^d}\Phi_d\left(\bm{k},\bm{l}\right) + \mathcal{O}\left(\frac{1}{(N\eta)^D}+\eta\right),
\label{eq:E_full_expansion}
\end{equation}
where the implicit constant in $\mathcal{O}(\cdot)$ is uniform in $z\in\mathcal{D}$. For $d\in \N\cup\{0\}$, the coefficients $\Phi_d$ depend only on $\Re z$, $\bm{k}$, $\bm{l}$ and $\beta$, and satisfy the following recurrence relations for $\bm{k}\neq\emptyset$:
\begin{equation}
\begin{split}
\Phi_d(\bm{k},\bm{l})=&\frac{2\ma_0}{\beta}\Im m_0 \sum_{j=1}^{n_2} \frac{\ii^{k_1-l_j-1}}{2^{k_1+l_j-1}} l_j\binom{k_1+l_j-1}{k_1-1} \Phi_d\left(\bm{k}^{[1]},\bm{l}^{[j]}\right)+\bm{1}_{\beta=1}\ma_0 k_1\Phi_{d-1}\left(\big(k_1+1,\bm{k}^{[1]}\big),\bm{l}\right)\\
+& \ma_0 \sum_{u=1}^{k_1} \Phi_{d-1}\left(\big(u,k_1+1-u,\bm{k}^{[1]}\big),\bm{l}\right)+\frac{2\ma_0}{\beta} \sum_{i=2}^{n_1} k_i \Phi_{d-1}\left(\big(k_1+k_i+1,\bm{k}^{[1,i]}\big),\bm{l}\right)\\
+&\frac{2\ma_0}{\beta} \sum_{j=1}^{n_2} \sum_{u=1}^{k_1} l_j \frac{(-1)^{k_1-u}}{(2\ii)^{k_1+l_j+1-u}} \binom{k_1+l_j-u}{l_j}\Phi_{d-1}\left( \big(u, \bm{k}^{[1]}\big), \bm{l}^{[j]}\right)\\
+&\frac{2\ma_0}{\beta} \sum_{j=1}^{n_2} \sum_{v=1}^{l_j+1} l_j\frac{(-1)^{k_1}}{(2\ii)^{k_1+l_j+1-v}}\binom{k_1+l_j-v}{k_1-1}\Phi_{d-1}\left(\bm{k}^{[1]},\big(v,\bm{l}^{[j]}\big)\right).
\end{split}
\label{eq:Phi_recursion}
\end{equation}
Here we additionally set $\Phi_{d'}:=0$ for $d'<0$ and $\Phi_0(\emptyset,\emptyset):=1$.
\end{lemma}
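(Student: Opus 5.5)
We prove \eqref{eq:E_full_expansion} by induction on $D$, and inside each step by induction on the total size $|\bm{k}|+|\bm{l}|$ (and the number of factors), using the static Gaussian renormalization. By symmetry ($\E(\bm{k},\bm{l})=\overline{\E(\bm{l},\bm{k})}$) we may assume $\bm{k}\neq\emptyset$; the case $\bm{k}=\bm{l}=\emptyset$ is trivial with $\Phi_0=1$. We expand the first factor $\mathrm{Tr}[G^{k_1}-m_{k_1}]$ using \eqref{eq:Delta_k}. Since $W$ is Gaussian, the fully underlined term has zero expectation. So the only remainders after extending the underline to the other factors are the second-order (Gaussian) contractions:
\begin{itemize}
\item[(a)] the terms of \eqref{eq:Delta_bmk_expansion} produced by the other factors in $\Delta(\bm{k})$, via \eqref{eq:under_extension_basic};
\item[(b)] the analogous terms produced by the factors $\overline{\Delta(l_j)}=\mathrm{Tr}[(G^*)^{l_j}-\overline{m}_{l_j}]$, via \eqref{eq:EWW}. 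The derivative of $\mathrm{Tr}(G^*)^{l_j}$ gives $\frac{2l_j}{\beta N}\mathrm{Tr}[G^{k_1}(G^*)^{l_j+1}]$.
\end{itemize}
Multiplying by $\eta^{|\bm{k}|+|\bm{l}|}$, terms carrying $1/N$ with one extra resolvent become $(N\eta)^{-1}$ times an $\E$ of the same type. These produce the $\Phi_{d-1}$ terms in the second line of \eqref{eq:Phi_recursion}.

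The key computation is the mixed product $G^{k_1}(G^*)^{l_j+1}$. Writing $G^*=G(\bar z)$ and using $G-G^*=2\ii\eta GG^*$ repeatedly (partial fractions in $1/(x-z)$ and $1/(x-\bar z)$), we decompose
\begin{equation*}
G^{k}(G^*)^{l}=\sum_{u=1}^{k}\frac{c_{u}}{(2\ii\eta)^{k+l-u}}G^u+\sum_{v=1}^{l}\frac{c'_{v}}{(2\ii\eta)^{k+l-v}}(G^*)^v,
\end{equation*}
with binomial coefficients $c_u,c'_v$ matching those in the last two lines of \eqref{eq:Phi_recursion}. Each $\mathrm{Tr}\,G^u$ is then split as $\mathrm{Tr}[G^u-m_u]+Nm_u$. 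The fluctuation parts give the $\Phi_{d-1}$ terms with $(u,\bm{k}^{[1]})$ and $(v,\bm{l}^{[j]})$. The deterministic parts $N\sum_u c_u m_u/(2\ii\eta)^{\cdots}+\ldots$ carry one fewer power of $N\eta$ and give the leading $\Phi_d(\bm{k}^{[1]},\bm{l}^{[j]})$ term. To identify its coefficient, we note that this deterministic combination equals $\eta^{k_1+l_j+1}\frac1N\mathrm{Tr}$ of the deterministic approximation of $G^{k_1}(G^*)^{l_j+1}$. As $\eta\to0$ this is dominated by $\int\rho_{sc}(x)\eta^{k+l+1}|x-z|^{-2(\cdot)}\dots\,\rd x$, concentrated near $E$, which yields $\pi\rho_{sc}(E)=\Im m_0$ times a residue constant $\ii^{k_1-l_j-1}2^{-(k_1+l_j-1)}\binom{k_1+l_j-1}{k_1-1}$. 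The correction to this is $O(\eta)$ by smoothness of $\rho_{sc}$ in the bulk. Similarly we replace $\ma=\ma(z)$ by $\ma_0$, and $m_k(z)$ by its boundary value, at cost $O(\eta)$. This is where the $\mathcal{O}(\eta)$ error and the dependence only on $\Re z$ come from.

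Terms in \eqref{eq:Delta_k} like $\Delta(k-1)$ and $m_{k+1-l}\Delta(l)$, and the $\beta=1$ term $km_{k+1}$, lose powers of $\eta$ after multiplication by $\eta^{|\bm{k}|+|\bm{l}|}$, so they are $O(\eta)$ by the size bound \eqref{eq:Delta_a_priori} in expectation. The same holds for the $\frac{2}{\beta}m_{k_1+k_i+1}$ terms of \eqref{eq:Delta_bmk_expansion}; only the $1/N$ pieces survive. We then iterate: every $\Phi_{d-1}$-type term is already multiplied by $(N\eta)^{-1}$, so applying the inductive hypothesis at level $D-1$ to it yields the expansion to order $D$. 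Each $\Phi_d(\bm{k}^{[1]},\bm{l}^{[j]})$-type term has strictly fewer factors, so an inner induction on $n_1+n_2$ closes the argument. The base errors use \eqref{eq:Delta_a_priori}, which gives $N^\xi$ rather than $O(1)$. To get the clean $(N\eta)^{-D}$ error, we run the expansion to order $D+1$ and absorb $N^\xi(N\eta)^{-D-1}\le (N\eta)^{-D}$, using $\eta\ge N^{-1+\epsilon}$.

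\textbf{Main obstacle.} The main obstacle is the precise bookkeeping in the partial-fraction decomposition and the evaluation of the deterministic part: showing that the $N m_u$ contributions, which are individually of size $\eta^{-(k+l-u)}$, combine exactly into an $O(1)$ quantity. This amounts to computing $\eta^{k+l+1}\int\rho_{sc}(x)(x-z)^{-k}(x-\bar z)^{-l-1}\rd x$ up to $O(\eta)$. We would do this via the Poisson-kernel/residue calculation at $x=E$ to fix the explicit coefficients.
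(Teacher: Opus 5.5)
Your scheme is the paper's: expand the first factor by \eqref{eq:Delta_k}, extend the underline via \eqref{eq:under_extension_basic} and its $G^*$-analogue, split $G^{k_1}(G^*)^{l_j+1}$ by Lemma~\ref{lem:resolvent_id} with $z_1-z_2=2\ii\eta$, and induct on $D$ and on $|\bm{k}|+|\bm{l}|$. The gap is in how you get the $\mathcal{O}(\eta)$ error.

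Every term carrying an extra power of $\eta$ multiplies some $\E(\bm{k}',\bm{l}')$. These come from $\Delta(k_1-1)$, $m_{k_1+1-u}\Delta(u)$, $\bm{1}_{\beta=1}k_1m_{k_1+1}$, $m_{k_1+k_i+1}$, the pieces $\eta^{u-1}m_u$ and $\eta^{v-1}\overline{m_v}$ with $u,v\ge2$, and the replacements $\ma\to\ma_0$, $m\to m_0$. You bound the $\E(\bm{k}',\bm{l}')$ by \eqref{eq:Delta_a_priori}. That only gives $\mathcal{O}(N^{\xi}\eta)$, which is not $\mathcal{O}((N\eta)^{-D}+\eta)$. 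Expanding to order $D+1$ does not help, because these terms carry no factor $(N\eta)^{-1}$. For instance, at $\eta\sim1$ the lemma asserts $|\E(\bm{k},\bm{l})|\lesssim1$, which the a priori bound alone never gives.

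What is missing is the $N^\xi$-free bound \eqref{eq:E_1bound}, $|\E(\bm{k},\bm{l})|\lesssim1$. The paper proves it first, by induction on $|\bm{k}|+|\bm{l}|$ from the identity \eqref{eq:E_expansion}. The terms with a factor $(N\eta)^{-1}$ are $\mathcal{O}(N^{-\epsilon+\xi})$ by \eqref{eq:Delta_a_priori}. All other terms involve $\E$'s with strictly smaller $|\bm{k}|+|\bm{l}|$ and bounded coefficients. In your nested scheme you could instead take this bound from the inner induction hypothesis, since all $\eta$-carrying terms have strictly smaller $|\bm{k}|+|\bm{l}|$; the point is that \eqref{eq:Delta_a_priori} is the wrong tool there. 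With this input the $\eta$-terms are genuinely $\mathcal{O}(\eta)$ and your induction closes.

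The step you flag as the main obstacle is, by contrast, not delicate. The correct normalisation of the contracted term is $\frac{2l_j}{\beta}\eta^{k_1+l_j}\langle G^{k_1}(G^*)^{l_j+1}\rangle$, not $\eta^{k_1+l_j+1}$. After it, the deterministic pieces are $\frac{2l_j}{\beta}(-1)^{k_1-u}\binom{k_1+l_j-u}{l_j}(2\ii)^{-(k_1+l_j+1-u)}\eta^{u-1}m_u$ and their $\overline{m_v}$-analogues. Each is individually $\mathcal{O}(1)$, no cancellation is needed, and only $u=1$ and $v=1$ survive modulo $\mathcal{O}(\eta)$. The two surviving binomials are equal, so their sum is proportional to $\overline{m}-m=-2\ii\Im m$. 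This gives the first line of \eqref{eq:Phi_recursion} at once, and it is how the paper gets it. Your Poisson-kernel/residue evaluation of $\int\rho_{sc}(E+\eta y)(y-\ii)^{-k_1}(y+\ii)^{-l_j-1}\,\dif y$ is a correct alternative (the residue at $y=\ii$ gives the same constant), only more roundabout.
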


Now we prove Lemma~\ref{lem:E_full_expansion}. 
First, similarly to the static expansion presented in Section~\ref{sec:expansion_1arg} we get
\begin{equation}
\begin{split}
&\ma^{-1}\E(\bm{k},\bm{l}) = \eta \E\left((k_1-1,\bm{k}^{[1]}), \bm{l}\right) + 2\sum_{u=1}^{k_1-1} m_{k_1+1-u} \eta^{k_1-u} \E\left((u,\bm{k}^{[1]}),\bm{l}\right)\\
&\quad +\frac{1}{N\eta}\sum_{u=1}^{k_1}\E\left((u,k_1+1-u,\bm{k}^{[1]}),\bm{l}\right)+\bm{1}_{\beta=1} k_1\left( m_{k_1+1}\eta^{k_1}\E\left(\bm{k}^{[1]},\bm{l}\right) + \frac{1}{N\eta}\E\left((k_1+1,\bm{k}^{[1]}),\bm{l}\right)\right)\\
&\quad  +\sum_{i=2}^{n_1}\frac{2k_i}{\beta} \left(m_{k_1+k_i+1}\eta^{k_1+k_i}\E\left(\bm{k}^{[1,i]},\bm{l}\right)+\frac{1}{N\eta}\E\left((k_1+k_i+1,\bm{k}^{[1,i]}),\bm{l}\right)\right)\\
&\quad  +\frac{2}{\beta}\sum_{j=1}^{n_2}\sum_{u=1}^{k_1}\frac{(-1)^{k_1-u}}{(2\ii)^{k_1+l_j+1-u}} l_j\binom{k_1+l_j-u}{l_j}\left(\frac{1}{N\eta}\E\left((u,\bm{k}^{[1]}),\bm{l}^{[j]}\right) + \eta^{u-1}m_u \E \left(\bm{k}^{[1]},\bm{l}^{[j]}\right)\right)\\
&\quad  +\frac{2}{\beta}\sum_{j=1}^{n_2}\sum_{v=1}^{l_j+1}\frac{(-1)^{k_1}}{(2\ii)^{k_1+l_j+1-v}}l_j\binom{k_1+l_j-v}{k_1-1}\left(\frac{1}{N\eta} \E\left(\bm{k}^{[1]},(v,\bm{l}^{[j]})\right) + \eta^{v-1}\overline{m_v}\E\left(\bm{k}^{[1]},\bm{l}^{[j]}\right)\right).
\end{split}
\label{eq:E_expansion}
\end{equation}
The terms in the first two lines of~\eqref{eq:E_expansion} come from the expansion of $\Delta(k_1)$, see~\eqref{eq:Delta_k}. The third line corresponds to the expansion of underline to $\Delta(k_i)$ via~\eqref{eq:under_extension_basic}, and the third line to the similar expansion to $\Delta(l_j)$. The expectation removes the full underline term since 
we are in the Gaussian case. The sums over $u$ and $v$ in the last two lines of~\eqref{eq:E_expansion} arise from the following generalized resolvent identity.

\begin{lemma}[Generalized resolvent identity]\label{lem:resolvent_id} For $H=H^*\in\C^{N\times N}$ and $z_j\in\C\setminus\R$ denote $G_j:=(H-z_j)^{-1}$, $j=1,2$. For any $k,l\in\N$ it holds that
\begin{equation}
G_1^kG_2^l = \sum_{u=1}^k (-1)^{k-u} \binom{l+k-u-1}{l-1} \frac{G_1^u}{(z_1-z_2)^{l+k-u}} + \sum_{v=1}^l (-1)^{l-v} \binom{l+k-v-1}{k-1} \frac{G_2^v}{(z_2-z_1)^{l+k-v}}.
\label{eq:resolvent_id}
\end{equation}
\end{lemma}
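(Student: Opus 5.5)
The plan is to prove \eqref{eq:resolvent_id} by partial fractions, which reduces the matrix statement to a scalar identity. Since $H$ is self-adjoint, it is unitarily diagonalizable, and $G_1$, $G_2$ are both functions of $H$. So it is enough to verify, for every real $x$ (an eigenvalue of $H$),
\begin{equation*}
\frac{1}{(x-z_1)^k(x-z_2)^l} = \sum_{u=1}^k (-1)^{k-u} \binom{l+k-u-1}{l-1} \frac{(x-z_1)^{-u}}{(z_1-z_2)^{l+k-u}} + \sum_{v=1}^l (-1)^{l-v} \binom{l+k-v-1}{k-1} \frac{(x-z_2)^{-v}}{(z_2-z_1)^{l+k-v}}.
\end{equation*}
We may assume $z_1\neq z_2$, as the right-hand side requires. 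The identity then holds as an equality of rational functions in $x$, so in particular at the eigenvalues.

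To get the scalar identity, treat the left-hand side as a rational function of $x$. Its only poles are at $z_1$ and $z_2$, and it decays at infinity, so it equals the sum of its principal parts at the two poles. Near $x=z_1$, set $y=x-z_1$ and write $(x-z_2)^{-l}=(y+z_1-z_2)^{-l}=(z_1-z_2)^{-l}(1+y/(z_1-z_2))^{-l}$. The binomial series gives
\begin{equation*}
(1+w)^{-l}=\sum_{j\ge0}(-1)^j\binom{l+j-1}{l-1}w^j .
\end{equation*}
The coefficient of $y^{-u}$ in $y^{-k}(y+z_1-z_2)^{-l}$ comes from $j=k-u$. It equals $(-1)^{k-u}\binom{l+k-u-1}{l-1}(z_1-z_2)^{-l-(k-u)}$, which is exactly the first sum. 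The principal part at $z_2$ follows by the symmetric computation with the roles $(z_1,k)\leftrightarrow(z_2,l)$ swapped, and it yields the second sum.

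There is no real obstacle here. The only points needing care are the bookkeeping of the binomial coefficient $\binom{l+j-1}{j}=\binom{l+j-1}{l-1}$ and the signs of $(z_1-z_2)$ versus $(z_2-z_1)$. As a sanity check, at $k=l=1$ the formula reduces to the usual resolvent identity $G_1G_2=(G_1-G_2)/(z_1-z_2)$.

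An alternative route is induction on $k+l$ from the resolvent identity, but the partial fraction argument is shorter and gives the coefficients directly.
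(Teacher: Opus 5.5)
Your proof is correct, but it takes a different route from the paper's.

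The paper's argument:
\begin{itemize}
\item It stays at the operator level and writes $G_1^kG_2^l=\frac{1}{(k-1)!(l-1)!}\partial_{z_1}^{k-1}\partial_{z_2}^{l-1}(G_1G_2)$, using $\partial_zG=G^2$.
\item It inserts the ordinary resolvent identity $G_1G_2=(G_1-G_2)/(z_1-z_2)$.
\item Since $G_1$ does not depend on $z_2$, one gets $\partial_{z_2}^{l-1}\frac{G_1}{z_1-z_2}=(l-1)!\,\frac{G_1}{(z_1-z_2)^l}$, and symmetrically for the $G_2$ term.
\item It expands the remaining $\partial_{z_1}^{k-1}$ (resp.\ $\partial_{z_2}^{l-1}$) by the Leibniz rule, and the Leibniz coefficients combine into $\binom{l+k-u-1}{l-1}$.
\end{itemize}

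You instead diagonalize $H$, reduce to the scalar partial fraction decomposition of $(x-z_1)^{-k}(x-z_2)^{-l}$, and read the coefficients off as Laurent coefficients via the binomial series. Your bookkeeping is right: $j=k-u$, $\binom{l+j-1}{j}=\binom{l+j-1}{l-1}$, and the signs of $z_1-z_2$ versus $z_2-z_1$ all check out.

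What each approach buys:
\begin{itemize}
\item The paper's argument is a little more economical on the page. It never uses self-adjointness, only that $z_1\neq z_2$ lie outside the spectrum.
\item Yours makes it transparent that the lemma is nothing but a partial fraction decomposition, and it produces the coefficients directly without a Leibniz-rule computation.
\item Your use of the spectral theorem is dispensable. Clearing denominators turns the scalar identity into the polynomial identity $1=\sum_u c_u(x-z_1)^{k-u}(x-z_2)^l+\sum_v d_v(x-z_1)^k(x-z_2)^{l-v}$. This can be evaluated at any matrix $H$ with $z_1,z_2\notin\mathrm{spec}(H)$ and then multiplied by $G_1^kG_2^l$. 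So your route gives the same generality as the paper's.
\end{itemize}
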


\begin{proof}[Proof of Lemma~\ref{lem:resolvent_id}] We write the lhs. of \eqref{eq:resolvent_id} as follows:
\begin{equation}
\begin{split}
G_1^kG_2^l =& \frac{1}{(k-1)!(l-1)!} \partial_{z_1}^{k-1}\partial_{z_2}^{l-1} G_1G_2 = \frac{1}{(k-1)!(l-1)!} \partial_{z_1}^{k-1}\partial_{z_2}^{l-1} \frac{G_1-G_2}{z_1-z_2}\\
=& \frac{1}{(k-1)!} \partial_{z_1}^{k-1} \frac{G_1}{(z_1-z_2)^l} + \frac{1}{(l-1)!} \partial_{z_2}^{l-1} \frac{G_2}{(z_2-z_1)^k}.
\end{split}
\label{eq:resolvent_id_proof}
\end{equation}
Performing the differentiation in $z_1,z_2$ in the rhs. of \eqref{eq:resolvent_id_proof}, we complete the proof of \eqref{eq:resolvent_id}.
\end{proof}

Having~\eqref{eq:E_expansion} in hand, we first show that
\begin{equation}
\left\vert \E(\bm{k},\bm{l})\right\vert\lesssim 1.
\label{eq:E_1bound}
\end{equation}
for any fixed multi-indices $\bm{k}$ and $\bm{l}$. Note that~\eqref{eq:Delta_a_priori} implies a slightly weaker bound of order $N^\xi$ for any fixed $\xi>0$ on the rhs. of~\eqref{eq:E_1bound}, and the meaning of~\eqref{eq:E_1bound} is that this small power of $N$ can be removed. First we prove \eqref{eq:E_1bound} for $\bm{l}=\emptyset$ by induction on $|\bm{k}|$. In this special case only the first three lines of \eqref{eq:E_expansion} remain. We immediately neglect the terms containing the factor $(N\eta)^{-1}$, since they are smaller than $N^{-\epsilon +\xi}$ by~\eqref{eq:Delta_a_priori}, and observe that only the terms with strictly smaller $|\bm{k}|$ are left, which makes the induction work. Next, by the symmetry between $G$ and $G^*$ we also obtain~\eqref{eq:E_1bound} for $\bm{k}=\emptyset$ and general $\bm{l}$. Using \eqref{eq:E_expansion} and the case when at least one of the multi-indices $\bm{k}, \bm{l}$ is empty as a base, we perform induction on $|\bm{k}|+|\bm{l}|$ and conclude \eqref{eq:E_1bound} in full generality.

Now we prove~\eqref{eq:E_full_expansion}. By \eqref{eq:E_1bound} one can absorb all terms containing at least one factor $\eta$ into the error term in the rhs. of \eqref{eq:E_full_expansion}. In particular, in the last but one line of \eqref{eq:E_expansion} the last term survives only for $u=1$, and similarly for the last line of \eqref{eq:E_full_expansion}. We multiply \eqref{eq:E_expansion} by $\ma$ and replace $\ma$ by $\ma_0$, since $\ma=\ma_0+\mathcal{O}(\eta)$. Additionally, we replace $m$ by $m_0$ for the same reason. The proof of~\eqref{eq:E_full_expansion} proceeds by induction on $D\in \N\cup\{0\}$ starting from the base case $D=0$, which follows from~\eqref{eq:E_1bound}. To make a step from $d$ to $d+1$ we induct on $|\bm{k}|+|\bm{l}|$. The remaining details are elementary and thus are omitted. Finally, the recursive relation~\eqref{eq:Phi_recursion} immediately follows from \eqref{eq:E_full_expansion}. This finishes the proof of Lemma~\ref{lem:E_full_expansion}.\hfill$\qed$

\subsection{Cancellation for $\beta=2$}\label{sec:E_cancel} In this section we prove a major cancellation among the explicit terms in the expansion \eqref{eq:E_full_expansion} for GUE and complete the proof of Proposition~\ref{prop:E_Gauss}.

\begin{lemma}\label{lem:Phi_expl} Let $W$ be an $N\times N$ GUE matrix. For any $n_1,n_2\in \N\cup\{0\}$ and $\bm{k}\in \N^{n_1}$, $\bm{l}\in \N^{n_2}$ it holds that
\begin{equation}
\Phi_0(\bm{k},\bm{l})=\delta_{n_1,n_2}\frac{\ii^{|\bm{k}|-|\bm{l}|}}{2^{|\bm{k}|+|\bm{l}|}} \prod_{j=1}^n \frac{1}{(k_j-1)!(l_j-1)!}\sum_{\sigma\in S_{n}} \prod_{j=1}^n (k_j+l_{\sigma(j)}-1)!. 
\label{eq:Phi_0_expl}
\end{equation}
Here in the case $n_1=n_2$ we denoted $n:=n_1$, and $S_n$ is the set of permutations of $[n]$. For any $d\ge 1$ we further have
\begin{equation}
\Phi_d(\bm{k},\bm{l})=0.
\label{eq:Phi_vanish}
\end{equation}
\end{lemma}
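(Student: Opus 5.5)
The plan is to verify both claims directly from the recursion \eqref{eq:Phi_recursion} with $\beta=2$, arguing by induction on $|\bm{k}|+|\bm{l}|$ (and on $d$). The $\bm{1}_{\beta=1}$ terms drop out. The base case is $\Phi_0(\emptyset,\emptyset)=1$. For $\bm{k}=\emptyset$ we use the conjugation symmetry $\E(\bm{k},\bm{l})=\overline{\E(\bm{l},\bm{k})}$ together with the fact that \eqref{eq:Phi_0_expl} transforms correctly under $\bm{k}\leftrightarrow\bm{l}$ and complex conjugation. It therefore suffices to treat $\bm{k}\neq\emptyset$, where \eqref{eq:Phi_recursion} expresses $\Phi_d(\bm{k},\bm{l})$ through $\Phi_d$ with strictly smaller total weight (first line) and through $\Phi_{d-1}$ (the remaining lines).

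\textbf{Step 1: the case $d=0$.} Here only the first term of \eqref{eq:Phi_recursion} survives. I would check that the right side of \eqref{eq:Phi_0_expl} satisfies
\[
\Phi_0(\bm{k},\bm{l})=2\ma_0\Im m_0\sum_j \frac{\ii^{k_1-l_j-1}}{2^{k_1+l_j-1}}\, l_j\binom{k_1+l_j-1}{k_1-1}\Phi_0(\bm{k}^{[1]},\bm{l}^{[j]}).
\]
In the bulk, $m_0=\overline{m_0}^{-1}$ has modulus one, so $2\ma_0\Im m_0=2m_0\Im m_0/(1-m_0^2)=-\ii$, an $E$-independent constant. This is exactly what makes $\Phi_0$ independent of $E$.

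Expanding the permutation sum in \eqref{eq:Phi_0_expl} according to $\sigma(1)=j$, the $j$-term contributes
\[
\frac{(k_1+l_j-1)!}{(k_1-1)!(l_j-1)!}=l_j\binom{k_1+l_j-1}{k_1-1}
\]
times $\Phi_0(\bm{k}^{[1]},\bm{l}^{[j]})$. The powers of $\ii$ and $2$ then match: $-\ii\cdot\ii^{k_1-l_j-1}=\ii^{k_1-l_j}$ and $2^{-(k_1+l_j-1)}\cdot\frac12$ gives the prefactor $2^{-(k_1+l_j)}$. Note the $\frac12$ needs checking against the normalization, so a careful constant check is needed here. The factor $\delta_{n_1,n_2}$ is propagated automatically. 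Hence \eqref{eq:Phi_0_expl} follows by induction.

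\textbf{Step 2: vanishing for $d\ge1$.} I would induct on $d$, assuming $\Phi_{d'}=0$ for $1\le d'<d$. Then for $d\ge2$ every $\Phi_{d-1}$ term vanishes, and the first line, $\Phi_d$ at lower weight, also vanishes by the inner induction, so $\Phi_d=0$.

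The whole content is the case $d=1$. One must show that the sum of lines two through four of \eqref{eq:Phi_recursion}, with $\Phi_0$ inserted from \eqref{eq:Phi_0_expl}, is identically zero. Granting this, $\Phi_1(\bm{k},\bm{l})$ equals the first-line term with $\Phi_1$ at lower weight, and so vanishes inductively.

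\textbf{Step 3 (the main obstacle): the combinatorial identity at $d=1$.} I would organize it by which $\bm{l}$-entry is paired with the new or merged resolvent power. All terms carry a common factor $\ma_0$ and a common product over untouched pairs in the permutation sum. The identity should then reduce to a finite one-pair or two-pair statement involving only $k_1$ (possibly $k_i$) and $l_j$.

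Concretely, the splitting term $\sum_u\Phi_0((u,k_1+1-u,\bm{k}^{[1]}),\bm{l})$ needs $n_1+1=n_2$. It must cancel against the fifth-line term, where $l_j$ is split into $v$ and $\bm{l}^{[j]}$ loses nothing, so again the counts must match. The merging term ($k_1+k_i+1$) needs $n_1-1=n_2$ and cancels against the fourth line. Thus the identity splits by the parity of $n_1-n_2=\pm1$.

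In each case I expect a Vandermonde/beta-integral type identity. To see it cleanly, I would try to interpret $\frac{(k+l-1)!}{(k-1)!(l-1)!}\,\ii^{k-l}2^{-k-l}$ as a contour integral, e.g. the $E$-independent leading part of $\eta^{k+l}\,\mathrm{Tr}\,G^k(G^*)^l$ built from $1/(x-z)^k(x-\bar z)^l$. This would make the cancellations equivalent to the partial-fraction identity of Lemma~\ref{lem:resolvent_id}, possibly after a symbolic check with generating functions in two variables. This bookkeeping is the step I expect to be hardest.
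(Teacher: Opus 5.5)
\textbf{Gap at the $d=1$ cancellation.} Your architecture matches the paper's: induction on $d$ and on $|\bm{k}|+|\bm{l}|$, only the first line of \eqref{eq:Phi_recursion} surviving at $d=0$, and all $d\ge 1$ reducing to the vanishing of lines two to four at $d=1$. However, the $d=1$ cancellation, which is the entire content of \eqref{eq:Phi_vanish}, is left unproved, and the pairing you propose for it is wrong. Count traces:
\begin{itemize}
\item the splitting term $\Phi_0((u,k_1+1-u,\bm{k}^{[1]}),\bm{l})$ and the third line $\Phi_0((u,\bm{k}^{[1]}),\bm{l}^{[j]})$ can be nonzero only when $n_2=n_1+1$;
\item the merging term and the fourth line $\Phi_0(\bm{k}^{[1]},(v,\bm{l}^{[j]}))$ both require $n_1=n_2+1$.
\end{itemize}
So the splitting term cannot cancel against the line where ``$l_j$ is split into $v$'', since its counts are incompatible. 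It cancels against the $u$-sum in the third line, term by term, with no identity needed. For fixed $u$ and $j$, the permutations in \eqref{eq:Phi_0_expl} pairing $k_1+1-u$ with $l_j$ contribute
\[
\frac{\ii^{k_1+1-u-l_j}}{2^{k_1+1-u+l_j}}\,\frac{(k_1+l_j-u)!}{(k_1-u)!\,(l_j-1)!}\,\Phi_0\big((u,\bm{k}^{[1]}),\bm{l}^{[j]}\big).
\]
The third-line coefficient $\frac{(-1)^{k_1-u}}{(2\ii)^{k_1+l_j+1-u}}\,l_j\binom{k_1+l_j-u}{l_j}$ equals the same factorial ratio times $\ii^{k_1-u-l_j-1}2^{-(k_1+l_j+1-u)}$. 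Hence the corresponding third-line term is exactly minus the display.

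For $n_1=n_2+1$, fix $i\ge 2$, $j$ and a bijection of the remaining indices. Pair $k_1+k_i+1$ with $l_j$ in the merging term, and $k_i$ with $v$ in the fourth line. The powers of $\ii$ and $2$ again differ by a sign, and after dividing by $k_il_j$ the factorials reduce to the Chu--Vandermonde identity $\sum_{u=0}^{l_j}\binom{l_j+k_1-1-u}{k_1-1}\binom{k_i+u}{k_i}=\binom{k_1+k_i+l_j}{l_j}$. This is where your Vandermonde guess is right. Both cancellations use that for $\beta=2$ every prefactor in lines two to four equals $\ma_0$. The contour-integral reinterpretation you sketch is unnecessary, and as stated it is only a heuristic.

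\textbf{Constants in Step 1.} For $\beta=2$ the first-line prefactor is $\frac{2\ma_0}{\beta}\Im m_0=\ma_0\Im m_0=\ii/2$ by \eqref{eq:a_id}; for instance, at $E=0$ one has $m_0=\ii$ and $\ma_0=\ii/2$. It is not $2\ma_0\Im m_0=-\ii$, and your check $-\ii\cdot\ii^{k_1-l_j-1}=\ii^{k_1-l_j}$ is false. The two slips offset each other. With the correct value $\ii/2$, both the power of $\ii$ and the factor $\frac12$ you flagged come out exactly, so Step 1 goes through once corrected.
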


Lemma~\ref{lem:E_full_expansion} together with Lemma~\ref{lem:Phi_expl} immediately imply Proposition~\ref{prop:E_Gauss}. 

\begin{proof}[Proof of Lemma~\ref{lem:Phi_expl}] Since the case $\bm{k}=\emptyset$ is not explicitly covered by \eqref{eq:Phi_recursion}, we first eliminate it by showing that
\begin{equation}
\Phi_d(\emptyset, \bm{l}) = \Phi_d(\bm{k},\emptyset)=0,
\label{eq:Phi_vanish_trivial}
\end{equation}
for any $d\in \N\cup\{0\}$ and $\bm{k},\bm{l}\neq \emptyset$. Due to the symmetry between $G$ and $G^*$, it suffices to prove the second identity in~\eqref{eq:Phi_vanish_trivial}. Applying \eqref{eq:Phi_recursion} to $\Phi_d(\bm{k},\emptyset)$, we obtain the sum of $\Phi_{d-1}$, i.e. $\Phi_d$ disappears. Using that $\Phi_d=0$ for $d<0$ and any multi-indices, and arguing by induction on $d$, we complete the proof of \eqref{eq:Phi_vanish_trivial}. 

In the remainder of the proof we assume that $\bm{k},\bm{l}\neq \emptyset$. Now we prove \eqref{eq:Phi_0_expl}. For $d=0$ the recursive relation \eqref{eq:Phi_recursion} greatly simplifies: only the sum over $j$ in the first line of \eqref{eq:Phi_recursion} remains. Observe additionally that
\begin{equation}
\ma_0 \Im m_0 = \frac{m_0\Im m_0}{1-m_0^2}=\frac{\ii}{2},
\label{eq:a_id}
\end{equation}
as it follows from~\eqref{eq:def_m} by a straightforward calculation. This identity is the core reason for the main cancellation. Thus, we have
\begin{equation}
\Phi_0(\bm{k},\bm{l})=\sum_{j=1}^{n_2} \frac{\ii^{k_1-l_j}}{2^{k_1+l_j}} \frac{(k_1+l_j-1)!}{(k_1-1)!(l_j-1)!}  \Phi_0\left(\bm{k}^{[1]},\bm{l}^{[j]}\right).
\label{eq:Phi0_recursion}
\end{equation}
Together with the initial condition  $\Phi_0(\emptyset,\emptyset)=1$ this immediately implies that $\Phi_0(\bm{k},\bm{l})=0$ for $n_1\neq n_2$. In the case when $n_1=n_2$ we apply \eqref{eq:Phi0_recursion} to $\Phi_0\left(\bm{k}^{[1]},\bm{l}^{[j]}\right)$ and proceed iteratively. After $n=n_1$ expansions (including the first one) we arrive to the sum of $n!$ terms naturaly labeled by $S_n$, and obtain \eqref{eq:Phi_0_expl}.

By \eqref{eq:Phi_recursion}, it suffices to verify \eqref{eq:Phi_vanish} for $d=1$. The proof of \eqref{eq:Phi_vanish} for larger $d$ proceeds then by induction on $d$ and uses that all terms in \eqref{eq:Phi_recursion} apart from the ones in the first line vanish by inductive hypothesis. We further focus on the case $d=1$. It is sufficient to show that the sum of all terms in the second  to the fourth lines of \eqref{eq:Phi_recursion} vanishes. For $n_1\neq n_2\pm 1$ this holds since each of the terms vanishes by \eqref{eq:Phi_0_expl}. However, for $n_2=n_2\pm 1$ we need to identify the cancellation between different lines. We further denote $n:=n_2$ and consider separately the cases $n_1=n-1$ and $n_1=n+1$.

\smallskip

\noindent\underline{$n_1=n-1$.} By \eqref{eq:Phi_0_expl}, all terms in the fourth line of \eqref{eq:Phi_recursion} vanish, as well as all terms in the second sum in the second line. Thus, we need to show that
\begin{equation}
\sum_{u=1}^{k_1} \left[\Phi_0\left(\big(u,k_1+1-u,\bm{k}^{[1]}\big),\bm{l}\right) + \sum_{j=1}^n \frac{(-1)^{k_1-u}}{(2\ii)^{k_1+l_j+1-u}} l_i\binom{k_1+l_j-u}{l_i}\Phi_{0}\left( \big(u, \bm{k}^{[1]}\big), \bm{l}^{[j]}\right)\right]=0.
\label{eq:cancel1}
\end{equation}
We will show that the big bracket in the lhs. of \eqref{eq:cancel1} vanishes for every $u\in [k_1]$. First, by \eqref{eq:Phi_0_expl} we have that the first term in \eqref{eq:cancel1} equals to
\begin{equation}
\frac{\ii^{|\bm{k}|-|\bm{l}|+1}}{2^{|\bm{k}|+|\bm{l}|+1}} \frac{1}{(u-1)!(k_1-u)!}\prod_{a=2}^{n-1}\frac{1}{(k_a-1)!}\prod_{b=1}^n \frac{1}{(l_b-1)!}\sum_{j=1}^n (k_1-u+l_j)!\sum_{\sigma\in \mathcal{M}_j} \prod_{c\in [2,n]} (k_c+l_{\sigma(c)}-1)!.
\label{eq:cancel1_1}
\end{equation}
Here we separated the element of $\bm{l}$ paired with $k_1+1-u$ in \eqref{eq:Phi_0_expl}, denoted $k_n:=u$ and employed $\mathcal{M}_j$ to denote the set of all bijective maps from  $[2,n]$ to $[n]\setminus\{j\}$. We claim that for every $j\in [n]$ the sum over $\sigma$ in \eqref{eq:cancel1_1} exactly cancels out with the term corresponding to the same index $j$ in the second sum in \eqref{eq:cancel1}. To this end, we apply~\eqref{eq:Phi_0_expl} to the latter term. The product of the coefficient of $\Phi_{0}\left( \big(u, \bm{k}^{[1]}\big), \bm{l}^{[j]}\right)$ in \eqref{eq:cancel1} and the one coming from\footnote{Here and further in this proof we say that $\ii^{|\bm{k}|-|\bm{l}|}/2^{|\bm{k}|+|\bm{l}|}$ is the coefficient in \eqref{eq:Phi_0_expl}.} \eqref{eq:Phi_0_expl} applied to this $\Phi_0$ equals to
\begin{equation}
\frac{(-1)^{k_1-u}}{(2\ii)^{l_i+k_1+1-u}} \frac{\ii^{|\bm{k}|-k_1+u-|\bm{l}|+l_i}}{2^{|\bm{k}|-k_1+u+|\bm{l}|-l_i}} = \frac{\ii^{|\bm{k}|-k_1+u-|\bm{l}|+l_i +2k_1-2u -l_i-k_1-1+u}}{2^{|\bm{k}|-k_1+u+|\bm{l}|-l_i +l_i+k_1+1-u}}=\frac{\ii^{|\bm{k}|-|\bm{l}|-1}}{2^{|\bm{k}|+|\bm{l}|+1}} = -\frac{\ii^{|\bm{k}|-|\bm{l}|+1}}{2^{|\bm{k}|+|\bm{l}|+1}},
\label{eq:2_power}
\end{equation}
which coincides with the corresponding coefficient in \eqref{eq:cancel1_1} up to the minus sign which gives the cancellation. The sum over permutations arising in \eqref{eq:Phi_0_expl} for $\Phi_{0}\left( \big(u, \bm{k}^{[1]}\big), \bm{l}^{[j]}\right)$ is naturally indexed by $\mathcal{M}_j$ and coincides with the sum in \eqref{eq:cancel1_1}. The factorials also exactly match after relabelling. This finishes the proof of~\eqref{eq:cancel1}.

\smallskip

\noindent\underline{$n_1=n+1$.} In this case all terms in the third line of \eqref{eq:Phi_recursion} vanish due to \eqref{eq:Phi_0_expl}, as well as the first group of terms in the second line. Thus, we need to show that
\begin{equation}
\sum_{i=2}^{n+1} k_i \Phi_0\left(\big(k_1+k_i+1,\bm{k}^{[1,i]}\big),\bm{l}\right) + \sum_{j=1}^n \sum_{v=1}^{l_j+1} l_j\frac{(-1)^{k_1}}{(2\ii)^{k_1+l_j+1-v}}\binom{l_j+k_1-v}{k_1-1}\Phi_0\left(\bm{k}^{[1]},\big(v,\bm{l}^{[j]}\big)\right) = 0.
\label{eq:cancel2}
\end{equation}
We apply \eqref{eq:Phi_0_expl} to all $\Phi_0$'s in \eqref{eq:cancel2}. A calculation similar to \eqref{eq:2_power} shows that the powers of $\ii$ and $2$ in the two sums in \eqref{eq:cancel2} are given by
\begin{equation}
\frac{\ii^{|\bm{k}|-|\bm{l}|+1}}{2^{|\bm{k}|+|\bm{l}|+1}}\quad\text{and}\quad -\frac{\ii^{|\bm{k}|-|\bm{l}|+1}}{2^{|\bm{k}|+|\bm{l}|+1}},
\end{equation}
respectively. Fix indices $i\in [n+1]\setminus\{1\}$ and $j\in [n]$. Consider the term in the first $\Phi_0$ in \eqref{eq:cancel2} where $k_1+k_i+1$ is paired with $l_j$ via \eqref{eq:Phi_0_expl}, and the term in the second $\Phi_0$ in \eqref{eq:cancel2} where $k_i$ is paired with $v$. Fix further any bijection $\sigma: [n+1]\setminus\{1,i\} \to [n]\setminus\{j\}$. In this
way, we keep only summation over $v$ in \eqref{eq:cancel2}, the rest of the parameters are fixed. It remains to show that
\begin{equation}
k_i\frac{(k_1+k_i+l_j)!}{(k_1+k_i)!(l_j-1)!}=\sum_{v=1}^{l_j+1}l_j \frac{(l_j+k_1-v)!(k_i+v-1)!}{(k_1-1)!(l_j-v+1)!(k_i-1)!(v-1)!}
\end{equation}
Denoting $u:=v-1$, 
this identity can be equivalently written as
\begin{equation}
\sum_{u=0}^{l_j} \binom{l_j+k_1-1-u}{k_1-1}\binom{k_i+u}{k_i} = \binom{k_1+k_i+l_j}{l_j}.
\label{eq:binom_id}
\end{equation}
which is a standard fact for binomial coefficients. This finishes the proof of~\eqref{eq:cancel2} and of Lemma~\ref{lem:Phi_expl}.
\end{proof}

\section{Dynamic chaos expansion: Propagation from global to the local regime}\label{sec:dynamic_expansion}

All implicit constants in the bounds in this section are uniform in $z\in\mathcal{D}$, $t\in [0,T]$ and $N$ even if it is not stated explicitly. In contrast, these constants typically depend on $p,q$ in \eqref{eq:kappapq_local}, \eqref{eq:main_p0_local}, \eqref{eq:main_pq_local}, and on multi-indices $\bm{k},\bm{l}$ in \eqref{eq:E_local} through $|\bm{k}|+|\bm{l}|$ and $\mathfrak{n}(\bm{k})+\mathfrak{n}(\bm{l})$.  

\subsection{Proof of Proposition~\ref{prop:E_local}(i): flow for expectations}\label{sec:E_local}

Introduce the family of time-dependent control parameters
\begin{equation}
\Upsilon_t(n,S):=\max\left\lbrace \left\vert\left(\E^{(W)}-\E^{(G)}\right)\left[\Delta_t(\bm{k}) \overline{\Delta_t(\bm{l})}\right]\right\vert\,:\, \mathfrak{n}(\bm{k})+\mathfrak{n}(\bm{l})\le n, \, |\bm{k}|+|\bm{l}|\le S \right\rbrace,
\label{eq:def_Upsilon}
\end{equation}
for $n,S\in\N \cup \{ 0\} $ and $t\in [0,T]$. Since $\Delta(\emptyset)=1$ by the definition \eqref{eq:def_Delta}, it holds that
\begin{equation}
\Upsilon_t(0,S)=\Upsilon_t(n,0)=0,\qquad \forall n,S\in\N\cup\{0\},\, t\in [0,T].
\label{eq:Upsilon_init}
\end{equation}
We further extend \eqref{eq:def_Upsilon} to $n,S\in\mathbf{Z}$ by setting $\Upsilon_t(n,S):=0$ for $\min\{n,S\}<0$. \nc
Note that the expectation in~\eqref{eq:def_Upsilon} is not normalized by $\eta_t^{|\bm{k}|+|\bm{l}|}$ to simplify the time-dependence of $\Upsilon_t$. Thus, a priori $\Upsilon_t$ is not an order one quantity, but instead \eqref{eq:Delta_a_priori} implies that
\begin{equation}
\Upsilon_t(n,S)\lesssim N^\xi \eta_t^{-S},
\label{eq:Upsilon_a_priori}
\end{equation}
uniformly in $t\in [0,T]$, for any fixed $n,S\in \N$. In turn, the statement of Proposition~\ref{prop:E_local}(i) is equivalent to
\begin{equation}
\Upsilon_t(n,S) \lesssim \eta_t^{-(S-1)},
\label{eq:Upsilon_goal}
\end{equation}
so essentially we need to gain an extra factor $\eta_t$ from the difference of the expectations. The remainder of this section is dedicated to the proof of~\eqref{eq:Upsilon_goal}.

We gradually improve~\eqref{eq:Upsilon_a_priori} to~\eqref{eq:Upsilon_goal} using the following system of dynamic master inequalities.

\begin{lemma}[Dynamic master inequalities for expectations]\label{lem:dynamic_master} For any fixed $n,S\in\N$ it holds that
\begin{equation}
\begin{split}
\Upsilon_t(n,S)\lesssim \Upsilon_0(n,S) + &\int_0^t \left(\Upsilon_s(n,S-1) + \sum_{S'\le S-2} \frac{1}{\eta^{S-S'+1}_s}\Upsilon_s(n-2,S')\right)\dif s\\
+&\frac{1}{N}\int_0^t \left(\Upsilon_s(n+1,S+2) + \sum_{S'\le S}\frac{1}{\eta^{S-S'+2}_s}\Upsilon_s(n-1,S')\right)\dif s,
\end{split}
\label{eq:dynamic_master}
\end{equation}
uniformly in $t\in [0,T]$. The implicit constant in~\eqref{eq:dynamic_master} does not depend on $N$ and $z=z_T\in\mathcal{D}$, but it may depend on $n$ and $S$.
\end{lemma}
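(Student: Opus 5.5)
We will derive \eqref{eq:dynamic_master} from Itô's formula applied to $\Delta_t(\bm{k})\overline{\Delta_t(\bm{l})}$ along the coupled flows \eqref{eq:OU_flow}--\eqref{eq:char_flow}, once for general $W_0$ and once for GOE/GUE initial data, and then subtract. For a single factor, the characteristic flow is chosen so that the drift from $\partial_t z_t$, the OU drift $-\tfrac12 W_t$ and the Gaussian second-order Itô term cancel at leading order. This is the standard computation
\begin{equation*}
\dif\, \mathrm{Tr}\big[G_t^k-m_{k,t}\big] = \frac{k}{2}\,\mathrm{Tr}\big[G_t^k-m_{k,t}\big]\dif t + \sum_{u}\langle G_t^u-m_{u,t}\rangle\,\mathrm{Tr}\big[G_t^{k+2-u}\big]\dif t + \frac{\bm{1}_{\beta=1}}{N}(\cdots)\dif t + \dif M_t ,
\end{equation*}
where $\dif M_t$ is a martingale term. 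In this step we use that $m_{k,t}$ solves the deterministic version of the same equation, so the deterministic parts cancel. We rewrite $\mathrm{Tr}[G^{k+2-u}] = N m_{k+2-u}+\Delta(k+2-u)$. The term $\langle G^u-m_u\rangle N m_{k+2-u}$ produces $\Delta(u)$ with order-one coefficients. These linear terms have total index at most $k+1$, but for $u=k+1$ the coefficient is $\sim m_1$. Collecting all the linear terms in $\Delta_t(k)$ gives a factor $\partial_t$-type drift that is exactly compensated by the time dependence of $\eta_t$, so we expect a homogeneous term $\Upsilon_s(n,S)$ with bounded coefficient. We will absorb it by Grönwall, which is why the inequality is stated only up to an implicit constant. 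The truly lower-order linear terms give $\Upsilon_s(n,S-1)$. The quadratic term $N^{-1}\Delta(u)\Delta(k+2-u)$ raises the number of traces by one and the index by two, giving $N^{-1}\Upsilon_s(n+1,S+2)$. The $\beta=1$ term $N^{-1}\mathrm{Tr}\,G^{k+2}$ is handled the same way.

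Next we treat products via the quadratic covariations $[\Delta(k_i),\Delta(k_j)]$ and $[\Delta(k_i),\overline{\Delta(l_j)}]$. By \eqref{eq:EWW} these equal $\frac{c}{N}\mathrm{Tr}[G^{k_i+1}G^{k_j+1}]\dif t$, respectively $\frac{c}{N}\mathrm{Tr}[G^{k_i+1}(G^*)^{l_j+1}]\dif t$. For mixed products we apply Lemma~\ref{lem:resolvent_id} with $z_1-z_2=2\ii\eta_s$. This turns each covariation into sums of $N^{-1}\eta_s^{-(k_i+l_j+2-u)}\mathrm{Tr}[G^u]$. Splitting $\mathrm{Tr}G^u = Nm_u+\Delta(u)$, the deterministic part removes two traces and lowers the index from $S$ to $S'\le S-2$ with coefficient $\eta_s^{-(S-S'+1)}$; this is the $\Upsilon_s(n-2,S')$ term. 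Note the power counting: a single trace of $G^u$ is worth $\eta^{-u+1}$ times $N$, hence the $+1$. The fluctuating part removes one trace with an extra $N^{-1}$ and gives $N^{-1}\eta_s^{-(S-S'+2)}\Upsilon_s(n-1,S')$.

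A key structural point is that after subtracting $\E^{(G)}$, every term is again a difference $(\E^{(W)}-\E^{(G)})$ of products of the same form. This holds because the flow equations are linear with deterministic coefficients and are identical for both initial laws, the non-Gaussianity entering only through the initial data. The martingale parts vanish in expectation, and we integrate from $0$ to $t$, producing $\Upsilon_0(n,S)$.

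We expect the main obstacle to be the bookkeeping of the linear drift terms. We must verify that the coefficients of $\Delta_t(\bm{k})$ itself combine, with the differentiation of $m_{k,t}$ along \eqref{eq:char_flow}, into something bounded. Because $\Upsilon$ is not $\eta$-normalized, the $\frac{k}{2}$-type terms must be shown to have bounded coefficients rather than $\eta_s^{-1}$ ones. This requires the precise identity $\partial_t m_t=\tfrac12 m_t$ and the exact form of the characteristic flow. If a residual $\Upsilon_s(n,S)$ term remains, we will apply Grönwall on $[0,T]$ with $T\sim1$, which keeps the constant $N$-independent.
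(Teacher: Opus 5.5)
Your overall architecture matches the paper's: It\^o's formula for $\Delta_t(\bm{k})\overline{\Delta_t(\bm{l})}$, covariations via \eqref{eq:EWW}, Lemma~\ref{lem:resolvent_id} for the mixed traces $\Tr[G_t^{k_i+1}(G_t^*)^{l_j+1}]$, subtraction of the two initial laws, and integration in time. Your power counting for the covariation terms is correct. Gr\"onwall on a homogeneous term with bounded coefficient is an acceptable substitute for the paper's explicit propagator $\mathcal{P}_{s,t}=\exp\{\int_s^t f_r\,\dif r\}$.

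The gap is in the single-trace step, which is exactly where the characteristic flow does its work. With the sum running up to $u=k+1$, your displayed formula leaves a linear term $m_t\Delta_t(k+1)$; you note yourself that it has index $k+1$ and coefficient $\sim m_1$. You then propose that it is ``compensated by the time dependence of $\eta_t$''. No such mechanism exists: $\Upsilon$ carries no $\eta$-normalization, so $\dot\eta_t$ never enters. If this term were present, it would produce $\Upsilon_s(n,S+1)$ with an $\mathcal{O}(1)$ coefficient. That is neither a homogeneous term Gr\"onwall can absorb nor any term of \eqref{eq:dynamic_master}. The induction \eqref{eq:Upsilon_iteration} would then not close, since each such step costs a factor $\eta_s^{-1}$.

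The missing ingredient is an exact cancellation. We have $\dif\,\Tr G_t^k=-k\Tr\big[G_t^{k+1}(\dif W_t-\dif z_t)\big]+(\text{It\^o term})$ and $\dif W_t-\dif z_t=\big(-\tfrac12(W_t-z_t)+m_t\big)\dif t+N^{-1/2}\dif B_t$. Hence the first-order drift equals $\tfrac k2\Tr G_t^k-k\,m_t\Tr G_t^{k+1}$. The It\^o correction is $\frac{k}{2N}\sum_{u=1}^{k+1}\Tr G_t^u\,\Tr G_t^{k+2-u}$, plus $\frac{k(k+1)}{2N}\Tr G_t^{k+2}$ for $\beta=1$.

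Now write $\Tr G_t^u=Nm_{u,t}+\Delta_t(u)$. The part of the It\^o correction linear in the fluctuations is $k\sum_{u=1}^{k+1}m_{k+2-u,t}\Delta_t(u)$. Its $u=k+1$ summand $k\,m_t\Delta_t(k+1)$ cancels exactly against the $-k\,m_t\Delta_t(k+1)$ produced by the $-m(z_t)$ component of \eqref{eq:char_flow}. The order-$N$ deterministic parts cancel against $\frac{\dif}{\dif t}Nm_{k,t}$, using the $k$-th derivative of $m^2+zm+1=0$.

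The result is \eqref{eq:Tr_dif}. The only linear term of index $k$ has coefficient $k(\tfrac12+m_{2,t})$, which is bounded since $|m_{2,t}|\lesssim 1$ in the bulk; your formula omits the $m_{2,t}$ part. All other linear terms have index at most $k-1$. With this corrected single-trace equation, your classification into $\Upsilon_s(n,S-1)$, $N^{-1}\Upsilon_s(n+1,S+2)$ and the two covariation sums goes through as in the paper.
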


The proof of Lemma~\ref{lem:dynamic_master} is postponed to the end of Section~\ref{sec:E_local}. One more ingredient required for the proof of Proposition~\ref{prop:E_local} is the control on the initial condition $\Upsilon_0$.

\begin{lemma}\label{lem:Upsilon_0} For any fixed $n,S\in\N$ it holds that
\begin{equation}
\Upsilon_0(n,S)\lesssim 1.
\label{eq:Upsilon_0}
\end{equation}
\end{lemma}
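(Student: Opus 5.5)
At $t=0$ the spectral parameter $z_0$ lies in the global regime, $\eta_0\sim 1$. Also $W_0$ is a Wigner matrix satisfying Assumption~\ref{ass:momass} under $\E^{(W)}$, and a GOE/GUE matrix under $\E^{(G)}$. So the claim \eqref{eq:Upsilon_0} only asks for an $N$-independent bound, without the factor $N^\xi$, on the difference of two expectations of products of traces $\Delta_0(\bm{k})\overline{\Delta_0(\bm{l})}$ in the global regime.

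The plan is to bound each expectation separately by $O(1)$. For $\eta\sim 1$ the normalisation $\eta^{|\bm{k}|+|\bm{l}|}$ is harmless, so by the triangle inequality it suffices to show
\[
\big|\E[\Delta(\bm{k})\overline{\Delta(\bm{l})}]\big|\lesssim 1,\qquad z\in\mathcal{D}_g,
\]
for every Wigner matrix satisfying Assumption~\ref{ass:momass}; GOE/GUE is a special case.

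To get this I will use the moment--cumulant relation, which writes $\E\big[\prod_i Y_i\big]$ as a sum over partitions $\pi$ of $\prod_{B\in\pi}\kappa(\{Y_i\}_{i\in B})$. I apply it with the $Y_i$ being the individual factors $\mathrm{Tr}[G^{k_i}-m_{k_i}]$ and the conjugates $\overline{\mathrm{Tr}[G^{l_j}-m_{l_j}]}$. The blocks are then controlled case by case.
\begin{itemize}
\item Blocks of size $1$: these are expectations $\E\,\mathrm{Tr}[G^k-m_k]$. They are $O(1)$, for instance from \eqref{eq:Delta_k} after taking expectation. There the underline term contributes $O(N^{-1/2+\xi})$ by the cumulant expansion of Lemma~\ref{lem:underline_exp_general} with $p=1$ (its third-order term). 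All other terms are $O(1)$ or $N^{-1}\E\Delta(l,k+1-l)$, and the latter is $O(N^{-1+\xi})$ by \eqref{eq:Delta_a_priori}. An induction on $k$ closes this.
\item Blocks of size $r\ge 2$: these are joint cumulants of single traces. Proposition~\ref{prop:global_Psi}, in its mixed-conjugate form covering \eqref{eq:extended_family} with $n=p=r$, gives the bound $N^{-(r-2)/2+\xi}(N^{-1}+1)$. For $r\ge3$ this is $\ll 1$.
\item The variance case $r=2$ needs an extra step, since the bound above only gives $N^\xi$. Here I will expand one argument with \eqref{eq:Delta_k} and Lemma~\ref{lem:underline_exp_general}. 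The first line of \eqref{eq:underline_exp_general} produces, via \eqref{eq:EWW}, terms $N^{-1}\E\,\mathrm{Tr}G^{k+l+1}=O(1)$. The remaining terms carry an explicit factor $N^{-1/2}$ times $N^\xi$, or reduce to lower-order variances, so induction on $k+l$ gives $O(1)$.
\end{itemize}

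Each product over a partition therefore has at most one factor of size $N^{\xi}$, and only from blocks that are in fact bounded. Consequently every product is $O(1)$ once the variance and mean are known to be $O(1)$, and the $N^\xi$ losses from blocks with $r\ge3$ are harmless. The main obstacle is removing the $N^\xi$ from the variance. For this I rely on the explicit identity \eqref{eq:EWW} and on the fact that at $\eta\sim1$ the factor $|1-m^2|^{-1}$ is bounded; the higher-order cumulant terms gain at least $N^{-1/2}$ exactly as in Section~\ref{sec:cum_exp_terms}.
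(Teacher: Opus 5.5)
Your route differs from the paper's. The paper never passes to cumulants here. It runs an induction on $|\bm{k}|+|\bm{l}|$ directly for $\E[\Delta(\bm{k})\overline{\Delta(\bm{l})}]$ through the static expansion \eqref{eq:E_expansion}. Its only non-Gaussian input is the bound \eqref{eq:E_global_underline} on the fully underlined term.

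Your moment--cumulant decomposition is a legitimate alternative. With Proposition~\ref{prop:global_Psi} disposing of all blocks of size $r\ge3$, it correctly reduces the lemma to showing that $\E\,\mathrm{Tr}[G^k-m_k]$ and covariances of two single traces are $O(1)$ with no $N^\xi$ loss. However, your justification for exactly this step, which you flag as the main obstacle, is wrong.

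The terms in the second line of \eqref{eq:underline_exp_general} where every argument is differentiated are expectations, not cumulants. So Section~\ref{sec:cum_exp_terms} does not cover them, and for $p=2$ the bound \eqref{eq:cum_exp_error_term_bound_od} gives only $N^{\xi}$. Concretely, take $\kappa(\mathrm{Tr}[\underline{WG}],\Delta(1))$ with $\ell=3$, $a\neq b$, one derivative on $G_{ba}$ and one on $\Delta(1)$. This produces $N^{-3/2}\sum_{a\neq b}\kappa(ab,ba,ab)\,\E[G_{aa}G_{bb}(G^2)_{ba}]$. There are $N^2$ summands and a single off-diagonal entry of size $N^{-1/2+\xi}$, so the count gives exactly $N^{\xi}$, not $N^{-1/2+\xi}$. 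The same happens for the third-order term of $\E\,\mathrm{Tr}[\underline{WG^k}]$ in your first bullet.

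Moreover, the $\ell=4$ terms whose four factors are all diagonal, of the form $N^{-2}\sum_{a\neq b}\E[(G^{u_1})_{aa}(G^{u_2})_{bb}(G^{u_3})_{aa}(G^{u_4})_{bb}]$, are genuinely of order one. They carry the fourth-cumulant corrections to the mean and the variance. So your claims that the underline term is $O(N^{-1/2+\xi})$ and that all remaining terms carry an explicit $N^{-1/2}$ are false.

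What is missing is the resummation step the paper uses for \eqref{eq:E_global_underline}, namely the argument \eqref{eq:before_iso_resum}--\eqref{eq:iso_resum}. The lone off-diagonal entry must be summed over the free indices against the diagonal factors, which turns it into a bilinear form.

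In the global regime this is even simpler. Set $x_a:=(G^{u})_{aa}$ and $y_b:=(G^{v})_{bb}$. Since $\|G\|\le\eta^{-1}\lesssim1$, one has deterministically $\bigl|\sum_{a,b}y_b(G^w)_{ba}x_a\bigr|\le\|G\|^w\|x\|\|y\|\lesssim N$. Hence the $\ell=3$ terms are $O(N^{-1/2})$. The fully diagonal terms should be bounded by $O(1)$ directly via $\|G\|\le\eta^{-1}$, without invoking local laws, since the local laws are where the $N^\xi$ enters.

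For the genuine covariance terms $\kappa(\partial_{\bm{\alpha}}(G^k)_{ba},\Delta(l))$ you also need the $n=p-1$ case of \eqref{eq:Psi_improved}, with an isotropic first argument. This gives $\Psi^{(2)}(1,\cdot)\lesssim N^{-1+\xi}$; the a priori $N^\xi$ would not suffice.

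With these additions your induction for the mean and the covariance closes, and your argument goes through. Compared with the paper, it leans on the heavier Proposition~\ref{prop:global_Psi}, but it confines the removal of $N^\xi$ to one- and two-point quantities.
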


This statement shows that the factor $N^\xi$ can be removed from \eqref{eq:Upsilon_a_priori} for $t=0$ (recall from Section~\ref{sec:strategy_Gaus_div} that $\eta_0\sim 1$). The proof of Lemma~\ref{lem:Upsilon_0} is presented in Appendix~\ref{app:Upsilon_0} and proceeds by a minor adjustment of arguments presented in Sections~\ref{sec:global} and~\ref{sec:Gauss}).

We now prove Proposition~\ref{prop:E_local}(i) by iterating~\eqref{eq:dynamic_master}, gaining a $1/N\eta$ factor in each step.  Specifically, we show by induction on $(d,n,S)\in(\N\cup\{0\})^3$ that 
\begin{equation}
\Upsilon_s(n,S) \lesssim \frac{1}{\eta_s^{S-1}} + \frac{1}{(N\eta_s)^d}\frac{N^\xi}{\eta_s^S},\quad s\in [0,T],
\label{eq:Upsilon_iteration}
\end{equation}
where the implicit constant in \eqref{eq:Upsilon_iteration} does not depend on $s$ and $N$, but may depend on $d,n$ and $S$. Recalling from \eqref{eq:def_D} that $N\eta_T\ge N^{-1+\epsilon}$, using that $\eta_s\ge \eta_T$ for all $s\in [0,T]$ and taking $d>\epsilon^{-1}$ in \eqref{eq:Upsilon_iteration}, we immediately obtain~\eqref{eq:Upsilon_goal}.

As the base case for the induction we use $(d,n,S)$ with $dnS=0$. For $d=0$, \eqref{eq:Upsilon_iteration} follows from the a priori bound \eqref{eq:Upsilon_a_priori}, and for $nS=0$ from \eqref{eq:Upsilon_init}. The set-up of the inductive argument is similar to the one in Section~\ref{sec:proof_global_Psi}: it consists of an appropriate finite set $\widehat{\mathscr{S}}\subset(\N\cup\{0\})^3$ and the linear order on it given by $\lhd$ defined around \eqref{eq:def_lexicographic} without the $p$-coordinate. Omitting further details, we focus on the inductive step, and prove \eqref{eq:Upsilon_iteration} for any triple $\bm{s}_0=(d_0,n_0,S_0)\in \widehat{\mathscr{S}}$ with $d_0,n_0,S_0\ge 1$ assuming that \eqref{eq:Upsilon_iteration} holds for all $\bm{s}\lhd \bm{s}_0$. We use \eqref{eq:dynamic_master} for $\Upsilon_t(n_0,S_0)$ and apply the inductive assumption to each $\Upsilon_s$ in the rhs. of \eqref{eq:dynamic_master}, arriving at
\begin{equation}
\Upsilon_t(n_0,S_0)\lesssim \Upsilon_0(n_0,S_0) +\int_0^t\!\!\left( \!\bm{1}_{S_0\ge 2}\!\left(\frac{1}{\eta_s^{S_0}} + \frac{1}{(N\eta_s)^{d_0}}\frac{N^\xi}{\eta_s^{S_0+1}}\right) + \frac{1}{N}\!\!\left(\frac{1}{\eta_s^{S_0+1}} + \frac{1}{(N\eta_s)^{d_0-1}} \frac{N^\xi}{\eta_s^{S_0+2}}\right)\!\!\right)\!\!\dif s.
\label{eq:Upsilon_step}
\end{equation}
Here the cut-off $\bm{1}_{S_0\ge 2}$ arises due to \eqref{eq:Upsilon_init}. Using \eqref{eq:char_flow} together with $\Im m_s\sim 1$, we obtain
\begin{equation}
\int_0^t \frac{\dif s}{\eta_s^k}\lesssim \frac{1}{\eta_t^{k-1}},
\label{eq:int_eta}
\end{equation}
for any real $k>1$. The implicit constant in $\lesssim$ does not depend on $t\in[0,T]$, but may depend on $k$. Combining \eqref{eq:Upsilon_0}, \eqref{eq:Upsilon_step} and \eqref{eq:int_eta}, we complete the proof of \eqref{eq:Upsilon_iteration} for $(d_0,n_0,S_0)$ and thereby of Proposition~\ref{prop:E_local}(i).\hfill $\square$ \nc

\begin{proof}[Proof of Lemma~\ref{lem:dynamic_master}] We start with differentiation of a single trace along the flow \eqref{eq:OU_flow}, \eqref{eq:char_flow}. For any $k \in \N$ It\^{o}'s formula gives
\begin{equation}
\begin{split}
\dif \Delta_t(k) =& k\left(\frac{1}{2}+m_{2,t}\right) \Delta_t(k) \dif t + \mathcal{F}_t(k)\dif t + \dif \mathcal{E}_t(k),\\
\mathcal{F}_t(k):= &\sum_{u=1}^{k-1} km_{k+2-u,t} \Delta_t(u)+ \frac{k}{2N} \sum_{u=1}^{k+1} \Delta_t (u,k+2-u)+\frac{k(k+1)}{2}\bm{1}_{\beta=1}\left(\!m_{k+2,t}+\frac{1}{N}\Delta_t(k+2)\!\right)\!,\\
\dif \mathcal{E}_t(k) = &-\frac{k}{\sqrt{N}} \mathrm{Tr}[G_t^{k+1}\dif B_t].
\end{split}
\label{eq:Tr_dif}
\end{equation}
To differentiate a product of several traces, we again apply the It\^{o}'s formula and get
\begin{equation}
\dif \prod_{i=1}^n X_{i,t} = \sum_{j=1}^n \dif X_{j,t} \prod_{i\in[n]\setminus\{j\}} X_{i,t} + \sum_{1\le j<h\le n}[X_i,X_j]_t \prod_{i\in[n]\setminus\{j,h\}} X_{i,t},
\end{equation}
for any collection of diffusion processes $\{X_{i,t}\}_{i=1}^n$. The covariation process of two resolvent traces is given by
\begin{equation}
[\Delta(k_1),\Delta (k_2)]_t=\frac{k_1k_2}{N}\sum_{a,b,c,d\in [N]}(G^{k_1+1})_{ba}(G^{k_2+1})_{dc}[B_{ab},B_{cd}]_t = \frac{2k_1k_2}{\beta N}\mathrm{Tr}[G_t^{k_1+k_2+2}]\dif t,
\label{eq:cov_process_traces}
\end{equation}
for any $k_1,k_2\in \N$. Here in the last step we used that
\begin{equation}
N^{-1}[B_{ab},B_{cd}]_t = \E [w_{ab}w_{cd}]\dif t,\quad \forall a,b,c,d\in[N],
\end{equation}
by the definition of $B_t$. Similarly to \eqref{eq:cov_process_traces} we have
\begin{equation}
\left[\Delta(k_1),\overline{\Delta (k_2)}\right]_t=\frac{2k_1k_2}{\beta N}\mathrm{Tr}[G_t^{k_1+1}(G_t^*)^{k_2+1}]\dif t.
\label{eq:cov_process_traces_star}
\end{equation}
Finally, combining \eqref{eq:Tr_dif}--\eqref{eq:cov_process_traces_star}, we obtain for any multi-indices $\bm{k}\in\N^{n_1}$, $\bm{l}\in\N^{n_2}$ with $n_1,n_2\in\N\cup\{0\}$ that
\begin{equation}
\begin{split}
\dif \left[\Delta_t(\bm{k})\overline{\Delta_t(\bm{l})}\right] = &f_t(\bm{k},\bm{l}) \Delta_t(\bm{k})\overline{\Delta_t(\bm{l})}\dif t + \mathcal{F}_t(\bm{k},\bm{l})\dif t + \dif \mathcal{E}_t(\bm{k},\bm{l}),\\
f_t(\bm{k},\bm{l}):= &\frac{|\bm{k}|+|\bm{l}|}{2} +|\bm{k}|m_{2,t} + |\bm{l}|\overline{m}_{2,t}.
\end{split}
\label{eq:dif_Deltakl}
\end{equation}
Here the forcing term $\mathcal{F}_t(\bm{k},\bm{l})$ equals to
\begin{equation}
\begin{split}
&\mathcal{F}_t(\bm{k},\bm{l}) = \sum_{i=1}^{n_1}\mathcal{F}_t(k_i)\Delta_t\big(\bm{k}^{[i]}\big) \overline{\Delta_t(\bm{l})} + \sum_{i=1}^{n_2}\overline{\mathcal{F}_t(l_i)}\Delta_t(\bm{k}) \overline{\Delta_t\big(\bm{l}^{[i]}\big)}\\
&\quad+ \frac{2}{\beta N}\!\!\sum_{1\le i<j \le n_1}\!\! k_ik_j \Tr \left[ G_t^{k_i+k_j+2}\right] \Delta_t\big(\bm{k}^{[i,j]}\big)\overline{\Delta_t(\bm{l})} + \frac{2}{\beta N}\!\!\sum_{1\le i<j \le n_2}\!\! l_il_j \overline{\Tr \left[ G_t^{l_i+l_j+2}\right]} \Delta_t(\bm{k})\overline{\Delta_t\big(\bm{l}^{[i,j]}\big)}\\
&\quad+ \frac{2}{\beta N} \sum_{i\in [n_1]}\sum_{j\in [n_2]} k_il_j \Tr\left[ G_t^{k_i+1}(G^*_t)^{l_j+1}\right] \Delta_t\big(\bm{k}^{[i]}\big)\overline{\Delta_t\big(\bm{l}^{[j]}\big)},
\end{split}
\label{eq:F_kl}
\end{equation}
with $\mathcal{F}_t(k)$ defined in~\eqref{eq:Tr_dif}. The exact form of the martingale term $\dif \mathcal{E}_t(\bm{k},\bm{l})$ does not matter since it disappears after taking expectation of~\eqref{eq:dif_Deltakl}.

We take the expectation of \eqref{eq:dif_Deltakl} with respect to $W_t$ with Gaussian and Wigner initial conditions, and subtract these equations from each other:
\begin{equation}
\frac{\dif}{\dif t}\left(\E^{(W)}\!\!-\!\!\E^{(G)}\right)\left[ \Delta_t(\bm{k})\overline{\Delta_t(\bm{l})}\right] = f_t(\bm{k},\bm{l}) \left(\E^{(W)}\!\!-\!\!\E^{(G)}\right)\left[ \Delta_t(\bm{k})\overline{\Delta_t(\bm{l})}\right] + \left(\E^{(W)}\!\!-\!\!\E^{(G)}\right)\mathcal{F}_t(\bm{k},\bm{l}).
\label{eq:E_dif_dif}
\end{equation}
Applying Lemma~\ref{lem:resolvent_id} to the last line of \eqref{eq:F_kl} and decomposing each $\Tr[G^u_t]$, $u\in \N$, in \eqref{eq:F_kl} into $\Delta_t(u)$ and $Nm_{u,t}$, we obtain
\begin{equation}
\begin{split}
\left\vert \left(\E^{(W)}\!\!-\!\!\E^{(G)}\right)\mathcal{F}_t(\bm{k},\bm{l})\right\vert \lesssim &\Upsilon_t(n,S-1) + \sum_{S'\le S-2} \frac{1}{\eta^{S-S'+1}_t}\Upsilon_t(n-2,S')\\
 + &\frac{1}{N}\left(\Upsilon_t(n+1,S+2) + \sum_{S'\le S}\frac{1}{\eta^{S-S'+2}_t}\Upsilon_t(n-1,S')\right).
\end{split}
\label{eq:F_difE_bound}
\end{equation}
Here the first sum over $S'$ arises from the deterministic approximations to the $GG^*$ terms in the last line of \eqref{eq:F_kl}, and the second sum from the $\Delta$-counterparts of these terms.

Solving~\eqref{eq:E_dif_dif}, we get
\begin{equation}
\left(\E^{(W)}\!\!-\!\!\E^{(G)}\right)\left[ \Delta_t(\bm{k})\overline{\Delta_t(\bm{l})}\right] = \mathcal{P}_{0,t}\left(\E^{(W)}\!\!-\!\!\E^{(G)}\right)\left[ \Delta_0(\bm{k})\overline{\Delta_0(\bm{l})}\right]  + \int_0^t \mathcal{P}_{s,t} \left(\E^{(W)}\!\!-\!\!\E^{(G)}\right)\mathcal{F}_s(\bm{k},\bm{l})\dif s,
\label{eq:E_dif_solved}
\end{equation}
where the $(\bm{k},\bm{l})$-dependent propagator $\mathcal{P}_{s,t}$ is given by
\begin{equation}
\mathcal{P}_{s,t}=\mathcal{P}_{s,t}(\bm{k},\bm{l}):= \exp\left\lbrace \int_s^t f_r(\bm{k},\bm{l})\dif r\right\rbrace,\quad \forall\, 0\le s\le t\le T.
\label{eq:E_dif_bound}
\end{equation}
Finally, observe that $|m_{2,r}|\lesssim 1$, since $m_2(z)=\partial_z m(z)$ and $m(z)$ has a bounded derivative in the bulk regime. Therefore, $|\mathcal{P}_{s,t}|\lesssim 1$. Combining \eqref{eq:E_dif_bound} with \eqref{eq:F_difE_bound}, we complete the proof of Lemma~\ref{lem:dynamic_master}.
\end{proof}

\subsection{Proof of Proposition~\ref{prop:kappa_local}: flow for cumulants}\label{sec:p0}

Introduce the two families of time-dependent control parameters to control the cumulants in \eqref{eq:kappapq_new} and \eqref{eq:main_p0}, respectively, along the flow \eqref{eq:OU_flow}, \eqref{eq:char_flow}:
\begin{equation}
\Xi_t^{(p)}(n,S):=\max\left\lbrace \Big|\kappa\Big(\left\lbrace \Delta_t(\bm{k}_j)\overline{\Delta_t(\bm{l}_j)}\right\rbrace_{j=1}^p\Big)\Big|\,:\, \sum_{j=1}^p (\mathfrak{n}(\bm{k}_j)+\mathfrak{n}(\bm{l}_j))\le n,\sum_{j=1}^p (|\bm{k}_j|+|\bm{l}_j|)\le S\right\rbrace,\label{eq:def_Xi}
\end{equation}
\begin{equation}
\Theta_t^{(p)}(n,S):=\max\left\lbrace \Big|\kappa\Big(\left\lbrace \Delta_t(\bm{k}_j)\right\rbrace_{j=1}^p\Big)\Big|\,:\, \sum_{j=1}^p \mathfrak{n}(\bm{k}_j)\le n,\sum_{j=1}^p |\bm{k}_j|\le S\right\rbrace,\label{eq:def_Theta}
\end{equation}
for $t\in [0,T]$ and $p,n,S\in\N$ with $p\ge 2$.  Since $\Delta(\emptyset)=1$ by \eqref{eq:def_Delta} and a cumulant of order $p\ge 2$ with one of the arguments equal to one (or any constant) vanishes, we have
\begin{equation}
\Xi_t^{(p)}(n,0)=\Xi_t^{(p)}(0,S)=0,\quad \forall p\ge 2, \, n,S\in\N\cup\{0\},\, t\in [0,T],
\end{equation}
and similarly for $\Theta_t^{(p)}$. \nc By \eqref{eq:Delta_a_priori}, $\Xi_t^{(p)}$ and $\Theta_t^{(p)}$ satisfy the following a priori bounds:
\begin{equation}
\Xi_t^{(p)}(n,S)\le \Theta_t^{(p)}(n,S)\lesssim N^\xi\eta_t^{-S},
\label{eq:Theta_a_priori}
\end{equation}
for any fixed $\xi>0$, uniformly in $t\in [0,T]$. We also have from Proposition~\ref{prop:global_Psi} the bounds on the initial conditions at $t=0$:
\begin{equation}
\Xi_0^{(p)}(n,S)\le \Theta_0^{(p)}(n,S) \lesssim \frac{N^{(n-p)/2+\xi}}{N^{(p-2)/2}},
\end{equation}
for any fixed $p\ge 2$.

We first establish Proposition~\ref{prop:kappa_local} in a weaker form. Instead of identifying the global mode, we give an upper bound on the cumulants matching the one which one would get from Proposition~\ref{prop:kappa_local}. The global mode will be recovered later in this section relying on the following size bound.

\begin{proposition}\label{prop:Theta_improved} Let $W$ be an $N\times N$ Wigner matrix satisfying Assumption~\ref{ass:momass}. For any fixed $p, n,S\in \N$ with $p\ge 3$ it holds that
\begin{align}
\Xi_t^{(p)}(n,S)\lesssim & \left(\frac{N^{(n-p)/2}}{N^{(p-2)/2}} + \frac{(N\eta_t)^{n-p}}{(N\eta_t)^{p-2}}\right)\frac{N^\xi}{\eta_t^S},\label{eq:Xi_improved}\\
\Theta_t^{(p)}(n,S)\lesssim & N^{(n-2p+2)/2+\xi},
\label{eq:Theta_improved}
\end{align}
for any fixed $\xi>0$, uniformly in $t\in [0,T]$ and $z=z_T\in\mathcal{D}$.
\end{proposition}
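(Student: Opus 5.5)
We follow the scheme of Section~\ref{sec:E_local}, now for cumulants: a dynamic hierarchy for $\Xi_t^{(p)}$ and $\Theta_t^{(p)}$ obtained from It\^o's formula, iterated starting from the a priori bound \eqref{eq:Theta_a_priori} and the initial bound at $t=0$ supplied by Proposition~\ref{prop:global_Psi}.

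\emph{Step 1: the cumulant equation.} Let $Y_j:=\Delta_t(\bm{k}_j)\overline{\Delta_t(\bm{l}_j)}$. By \eqref{eq:dif_Deltakl}, each $Y_j$ satisfies $\dif Y_j=f_t(\bm{k}_j,\bm{l}_j)Y_j\,\dif t+\mathcal{F}_t(\bm{k}_j,\bm{l}_j)\,\dif t+\dif\mathcal{E}_t(\bm{k}_j,\bm{l}_j)$. Differentiate $\log\E\exp\{\sum_j x_jY_j\}$ along the flow and extract the mixed derivative. This gives
\[
\frac{\dif}{\dif t}\kappa(Y_1,\ldots,Y_p)=\Big(\sum_j f_t(\bm{k}_j,\bm{l}_j)\Big)\kappa(Y_1,\ldots,Y_p)+\sum_j\kappa(\ldots,\mathcal{F}_t(\bm{k}_j,\bm{l}_j),\ldots)+\sum_{\pi}\sum_{i\neq j}\kappa\big(\{[Y_i,Y_j]_t\},\ldots\big),
\]
where $\pi$ runs over the ways of splitting the remaining arguments between the two cumulant blocks. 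The last sum consists of products of lower-order cumulants in which the quadratic variations $[Y_i,Y_j]_t$, of order $N^{-1}\Tr[G^{a}(G^*)^{b}]$ by \eqref{eq:cov_process_traces}--\eqref{eq:cov_process_traces_star}, couple two distinct arguments.

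\emph{Step 2: bound each term by control parameters.} The internal forcing $\mathcal{F}_t(\bm{k}_j,\bm{l}_j)$ gives $\Xi^{(p)}(n,S-1)$ together with $N^{-1}\Xi^{(p)}(n+1,S+2)$. The deterministic parts $Nm_{u}$ of the traces generated by coupling two traces inside the same argument give terms with $n-2$ traces and factors $\eta^{-(S-S'+1)}$. The cross terms are the important ones. Write $\Tr[G^a(G^*)^b]$ via Lemma~\ref{lem:resolvent_id} and split it into $N m_u$ plus $\Delta$. The deterministic part, of size $\eta_s^{-(a+b-1)}$ with no factor $N^{-1}$, merges two arguments into one. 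Using the cumulant splitting formula, this produces products $\Xi^{(p_1)}(n_1,\cdot)\,\Xi^{(p_2)}(n_2,\cdot)$ with $p_1+p_2=p+1$, and $\Xi^{(p-1)}(n-2,\cdot)$. In the one-sided case $\Theta$ no $GG^*$ terms appear; $\Tr G^{a+b+2}$ is analytic, its deterministic part carries a factor $N^{-1}\cdot N$ but does not generate inverse powers of $\eta$, which is why \eqref{eq:Theta_improved} has no $(N\eta)$-dependence. The propagator $\exp\int f$ is bounded as before.

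\emph{Step 3: induction.} We prove by induction on $p$, then on an iteration counter $d$, then on $(n,S)$, using the lexicographic order of Section~\ref{sec:proof_global_Psi}. The target is \eqref{eq:Xi_improved} plus an extra error $(N\eta_t)^{-d}N^\xi\eta_t^{-S}$. Each application of the integral inequality gains a factor $(N\eta)^{-1}$ from the $1/N$ terms via \eqref{eq:int_eta}, while the terms with lower $p$ or fewer traces are already controlled by the inductive hypothesis. Choosing $d>1/\epsilon$ then gives the claim.

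The main obstacle is the bookkeeping in Step 2 for the product terms $\Xi^{(p_1)}\Xi^{(p_2)}$. We must check that the exponents combine consistently: $(n_1-p_1)+(n_2-p_2)$ must match $n-p$ and $(p_1-2)+(p_2-2)=p-3$, with the lost power supplied by the integral $\int_0^t\eta_s^{-k}\,\dif s\lesssim\eta_t^{-k+1}$. For the $(N\eta)$-branch, the inhomogeneous powers $\eta^{-(a+b-1)}$ must also be matched against the $\eta^{-S}$ weights. In particular, second-order cumulants ($p_i=2$) enter with the a priori bound $\Xi^{(2)}\lesssim N^\xi\eta^{-S}$, and these may need the refined variance bound for the base case. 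If the exponent count fails by one power, the fallback is to strengthen the inductive hypothesis by tracking $n$ and $S$ separately.
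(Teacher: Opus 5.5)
Your overall architecture matches the paper's: It\^o differentiation of the cumulants, master inequalities coupling level $p$ to level $p-1$, and iteration from \eqref{eq:Theta_a_priori} and Proposition~\ref{prop:global_Psi} using \eqref{eq:int_eta}. However, your Step~1 identity is false, and the ``main obstacle'' you leave open is an artifact of that error. Differentiating the generating function gives
\[
\partial_t\log\E e^{x\cdot Y_t}=\E_x\Big[x\cdot\mu_t+\tfrac12\sum_{i,h}x_ix_h\,\tfrac{\dif}{\dif t}[Y_i,Y_h]_t\Big],\qquad \E_x[Z]:=\frac{\E\big[Z\,e^{x\cdot Y_t}\big]}{\E\big[e^{x\cdot Y_t}\big]}.
\]
Since $\partial_{x_{j_1}}\cdots\partial_{x_{j_r}}\E_x[Z]\big|_{x=0}=\kappa(Z,Y_{j_1},\ldots,Y_{j_r})$, applying $\partial_{x_1}\cdots\partial_{x_p}$ at $x=0$ yields exactly Lemma~\ref{lem:Ito_cumulants}. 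Each pair $i<h$ contributes the single cumulant $\kappa\big(\tfrac{\dif}{\dif t}[Y_i,Y_h]_t,\{Y_j\}_{j\neq i,h}\big)$ of order $p-1$, and the terms with $i=h$ carry $x_i^2$ and drop out. So there are no products of lower-order cumulants.

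No splitting formula is needed afterwards either. By \eqref{eq:cov_process_traces}, \eqref{eq:cov_process_traces_star}, Lemma~\ref{lem:resolvent_id} and $\Tr G^u=Nm_u+\Delta(u)$, the merged argument is a linear combination of products $\Delta(\bm{k})\overline{\Delta(\bm{l})}$. These have either $n-2$ traces with coefficients $\lesssim\eta_s^{-(S-S'+1)}$, or $n-1$ traces with coefficients $\lesssim N^{-1}\eta_s^{-(S-S'+2)}$. Hence the merged argument lies directly in the family defining $\Xi^{(p-1)}$ (respectively $\Theta^{(p-1)}$, with no inverse powers of $\eta$), which is precisely \eqref{eq:Theta_master}.

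With the correct equation, the exponent count you were unsure about closes at once. Since $(n-2)-2(p-1)+2=n-2p+2$, inserting the target bound at level $p-1$ into the $(n-2)$-trace term gives $\big(N^{(n-2p+2)/2}+(N\eta_s)^{n-2p+2}\big)N^\xi\eta_s^{-S-1}$, and the extra $\eta_s^{-1}$ is absorbed by \eqref{eq:int_eta}. The $N^{-1}$-terms with $n\pm1$ traces (including $N^{-1}\Xi^{(p)}(n+1,S+2)$) additionally produce $N^{(n-2p+1)/2}\eta_t^{-1}\cdot N^\xi\eta_t^{-S}$. This is bounded by the larger of the two target terms when $n\le 2p-3$, which is the only nontrivial range, since for $n\ge 2p-2$ \eqref{eq:Xi_improved} is just the a priori bound.

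In particular, at $p-1=2$ the needed input on $\Xi^{(2)}$ is just \eqref{eq:Theta_a_priori}, so no refined variance bound enters \eqref{eq:Xi_improved}. A bound on $\Theta^{(2)}$ uniform in $\eta$ is needed only for \eqref{eq:Theta_improved} at $p=3$. That bound can be obtained by running the same one-sided flow at $p=2$, not from any product structure.

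As written, your argument rests on an incorrect evolution identity and explicitly leaves the resulting bookkeeping unverified, with a fallback in case it fails. So it is not yet a proof.
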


The proof of Proposition~\ref{prop:Theta_improved} relies on the dynamic master inequalities analogous to the ones for $\Upsilon_t$ in Lemma~\ref{lem:dynamic_master}.\nc

\begin{lemma}[Dynamic master inequalities for cumulants]\label{lem:master_Xi} Let $W$ be an $N\times N$ Wigner matrix satisfying Assumption~\ref{ass:momass}. For any fixed $p, n,S\in \N$ with $p\ge 3$ it holds that
\begin{equation}
\begin{split}
\Xi_t^{(p)}(n,S) \lesssim &\Xi_0^{(p)}(n,S) + \int_0^t \left(\Xi^{(p)}_s(n,S-1) +\frac{1}{N}\Xi^{(p)}_s(n+1,S+2)\right) \dif s\\
+&\int_0^s\left(\sum_{S'\le S} \frac{1}{N\eta^{S-S'+2}}\Xi^{(p)}_s(n-1,S') + \sum_{S'\le S-2} \frac{1}{\eta^{S-S'+1}}\Xi^{(p)}_s(n-2,S')\right)\dif s\\
+&\int_0^s\left(\sum_{S'\le S} \frac{1}{N\eta^{S-S'+2}}\Xi^{(p-1)}_s(n-1,S') + \sum_{S'\le S-2} \frac{1}{\eta^{S-S'+1}}\Xi^{(p-1)}_s(n-2,S')\right)\dif s,\\
\Theta_t^{(p)}(n,S) \lesssim &\Theta_0^{(p)}(n,S) + \int_0^t \left(\Theta^{(p)}_s(n,S-1) +\frac{1}{N}\Theta^{(p)}_s(n+1,S+2)\right) \dif s\\
+&\int_0^t \left(\frac{1}{N}\Theta_s^{(p-1)}(n-1,S) +\Theta_s^{(p-1)}(n-2,S-2)\right)\dif s,
\end{split}
\label{eq:Theta_master}
\end{equation}
for any fixed $\xi>0$, uniformly in $t\in [0,T]$ and $z=z_T\in\mathcal{D}$.
\end{lemma}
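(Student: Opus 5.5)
We follow the proof of Lemma~\ref{lem:dynamic_master}, now for cumulants instead of expectations. The starting point is the It\^o formula \eqref{eq:dif_Deltakl} for each argument $X_{j,t}:=\Delta_t(\bm{k}_j)\overline{\Delta_t(\bm{l}_j)}$. We write the joint cumulant $\kappa(X_{1,t},\ldots,X_{p,t})$ as the multilinear combination of moments given by the cumulant--moment relation, differentiate in $t$, and reorganise the result back into cumulants.

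\textbf{Step 1: evolution equation for the cumulant.} The drift term $f_t(\bm{k}_j,\bm{l}_j)X_{j,t}$ is linear, so it contributes $\sum_j f_t(\bm{k}_j,\bm{l}_j)\,\kappa$. Since $\sum_j f_t(\bm{k}_j,\bm{l}_j)=f_t(\cup\bm{k}_j,\cup\bm{l}_j)$ has bounded real part, the propagator bound $|\mathcal{P}_{s,t}|\lesssim1$ from \eqref{eq:E_dif_bound} applies unchanged. The forcing term $\mathcal{F}_t(\bm{k}_j,\bm{l}_j)$ also enters linearly: it contributes $\kappa(\ldots,\mathcal{F}_t(\bm{k}_j,\bm{l}_j),\ldots)$. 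The martingale terms drop after taking expectations.

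The only new feature is the quadratic covariation between \emph{different} arguments $j\neq h$. Differentiating $\log\E\exp$ shows that
\begin{equation*}
\frac{\dif}{\dif t}\kappa \ni \sum_{j<h}\kappa\Big(\frac{\dif[X_j,X_h]_t}{\dif t},\{X_i\}_{i\neq j,h}\Big),
\end{equation*}
which is a cumulant with $p-1$ arguments. By \eqref{eq:cov_process_traces}--\eqref{eq:cov_process_traces_star}, this covariation is $\tfrac{2k k'}{\beta N}\Tr[G^{k+k'+2}]$ (or the $GG^*$ analogue) times the remaining traces. I would verify this structure carefully via the standard fact that, for a diffusion, the cumulant generating function picks up exactly these pairwise covariation terms. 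This is the main technical point, and it is where the $\Xi^{(p-1)}$ terms come from.

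\textbf{Step 2: bounding the forcing terms.} Decompose every $\Tr[G^u]=\Delta(u)+Nm_u$ and apply Lemma~\ref{lem:resolvent_id} to the $G^{k+1}(G^*)^{l+1}$ traces, writing them as pure powers with prefactors $(2\ii\eta)^{-\cdot}$. Then each term is bounded as follows. The $m_{k+2-u}\Delta(u)$ terms lose resolvents and give $\Xi^{(p)}(n,S-1)$. The $\frac1N\Delta(u,k+2-u)$ terms give $\frac1N\Xi^{(p)}(n+1,S+2)$. The intra-argument covariations and the $GG^*$ terms give the sums over $S'$ with the stated powers of $\eta^{-1}$, for $\Xi^{(p)}(n-1,\cdot)$ and $\Xi^{(p)}(n-2,\cdot)$. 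Deterministic pieces that multiply an argument which becomes constant vanish, since a cumulant of order $\ge2$ with a constant argument is zero.

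The inter-argument covariations from Step~1 produce the same types of bounds with $p$ replaced by $p-1$, giving the third line of \eqref{eq:Theta_master}. For $\Theta$ no $G^*$ appears, so the Lemma~\ref{lem:resolvent_id} factors of $\eta^{-1}$ are absent. The deterministic part $Nm_{k+k'+2}\cdot\frac1N$ yields $\Theta^{(p-1)}(n-2,S-2)$, and the fluctuation part yields $\frac1N\Theta^{(p-1)}(n-1,S)$.

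\textbf{Step 3: integrate.} Solving the linear ODE via Duhamel as in \eqref{eq:E_dif_solved}, and using $|\mathcal{P}_{s,t}|\lesssim1$, gives \eqref{eq:Theta_master}. I expect the main obstacle to be the bookkeeping in Step~1: confirming that no products of lower-order cumulants appear beyond the $p-1$ covariation cumulants, so that the hierarchy closes on $\Xi^{(p)}$ and $\Xi^{(p-1)}$ alone.
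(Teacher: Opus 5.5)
Your proposal is correct and takes the same route as the paper. The paper also applies the cumulant It\^o formula of Lemma~\ref{lem:Ito_cumulants} to $X_{j,t}=\Delta_t(\bm{k}_j)\overline{\Delta_t(\bm{l}_j)}$; that formula is proved exactly as you suggest, via the cumulant--moment relation, so that only the pairwise covariation cumulants of order $p-1$ appear. It then repeats the forcing estimates and the Duhamel argument with $|\mathcal{P}_{s,t}|\lesssim 1$ from Lemma~\ref{lem:dynamic_master}.

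One bookkeeping slip, which you share with the displayed statement: the fluctuation part $\frac{1}{N}\Delta_t(k+k'+2,\ldots)$ of an inter-argument covariation carries $S+2$ resolvents, not $S$. It therefore gives $\frac{1}{N}\Theta^{(p-1)}_s(n-1,S+2)$, and likewise $\frac{1}{N}\Xi^{(p-1)}_s(n-1,S+2)$. This is harmless downstream: the $\Theta$ bound in Proposition~\ref{prop:Theta_improved} does not depend on $S$, and for $\Xi$ this term has the same size as the term $\frac{1}{N\eta_s^2}\Xi^{(p-1)}_s(n-1,S)$ already present.
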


\begin{proof}[Proof of Proposition~\ref{prop:Theta_improved}] The proof of \eqref{eq:Xi_improved}--\eqref{eq:Theta_improved} relies on the gradual improvement of the a priori bounds \eqref{eq:Theta_a_priori} via the dynamic master inequalities \eqref{eq:Theta_master} in a way very similar to the argument \eqref{eq:Upsilon_iteration}--\eqref{eq:int_eta} for $\Upsilon_t$. We omit further details.
\end{proof}

The proof of Lemma~\ref{lem:master_Xi} proceeds by differentiation of cumulants along the flow \eqref{eq:OU_flow}, \eqref{eq:char_flow}.
The following simple statement shows how to perform this differentiation.

\begin{lemma}[It\^{o} formula for cumulants]\label{lem:Ito_cumulants} For $p\in\N$ and $T>0$, let $X_{1,t},\ldots,X_{p,t}$, $t\in[0,T]$, be complex-valued diffusion processes defined on the same probability space, i.e.
\begin{equation}
\dif X_{j,t}=\mu_{j,t}\dif t + (\bm{\sigma}_t\dif B_t)_j,
\end{equation}
where $\mu_{j,t}\in\C$, $\bm{\sigma}_t\in\C^{p\times d}$ for some $d\in\N$, and $\dif B_t$ is the standard Brownian motion in $\R^d$. Assume that $X_{j,t}$ has all finite moments for all $j\in[p]$ and $t\in [0,T]$. Then it holds that
\begin{equation}
\frac{\dif}{\dif t}\kappa\left(\lbrace X_{j,t}\rbrace_{j\in[p]}\right) = \sum_{i=1}^p \kappa\left(\mu_{i,t},\lbrace X_{j,t}\rbrace_{j\in[p]\setminus\{i\}}\right) + \sum_{1\le i<h\le p}\kappa\left(\frac{\dif}{\dif t}[X_i,X_h]_t,\lbrace X_{j,t}\rbrace_{j\in[p]\setminus\{i,h\}}\right), 
\label{eq:Ito_cumulants}
\end{equation} 
for any $t\in [0,T]$. 
\end{lemma}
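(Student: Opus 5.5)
The plan is to derive \eqref{eq:Ito_cumulants} from the definition \eqref{eq:def_cum} and the moment--cumulant relations, using It\^o's formula for products of the $X_j$. The cumulant $\kappa(X_1,\dots,X_p)$ is a polynomial in the mixed moments $\E\prod_{j\in A}X_{j,t}$, $A\subset[p]$:
\begin{equation*}
\kappa\left(\lbrace X_{j,t}\rbrace_{j\in[p]}\right)=\sum_{\pi\in P([p])}(|\pi|-1)!(-1)^{|\pi|-1}\prod_{B\in\pi}\E\Big[\prod_{j\in B}X_{j,t}\Big],
\end{equation*}
where $P([p])$ is the set of all set partitions of $[p]$. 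It therefore suffices to know $\frac{\dif}{\dif t}\E\prod_{j\in A}X_{j,t}$ and then check that the resulting combination reorganizes into the right-hand side of \eqref{eq:Ito_cumulants}.

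First step: by It\^o's formula for a product of diffusions (the same formula used before \eqref{eq:cov_process_traces}), and since the martingale part has zero expectation under the finite moment assumption (after a routine localization),
\begin{equation*}
\frac{\dif}{\dif t}\E\prod_{j\in A}X_{j,t}=\sum_{i\in A}\E\Big[\mu_{i,t}\prod_{j\in A\setminus\{i\}}X_{j,t}\Big]+\sum_{\{i,h\}\subset A}\E\Big[c_{ih,t}\prod_{j\in A\setminus\{i,h\}}X_{j,t}\Big],
\end{equation*}
with $c_{ih,t}:=\frac{\dif}{\dif t}[X_i,X_h]_t=(\bm\sigma_t\bm\sigma_t^{\mathfrak t})_{ih}$.

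Second step, the cleanest way to organize the combinatorics: introduce formal variables. Derivations act linearly on moments, so define on the algebra generated by $X_1,\dots,X_p$ (together with $\mu_i$, $c_{ih}$) the operator $\mathcal L$ that replaces one factor $X_i$ by $\mu_i$, or a pair $X_iX_h$ by $c_{ih}$. The first step says $\frac{\dif}{\dif t}\E[M]=\E[\mathcal L M]$ for every monomial $M$. Now note that $\mathcal{L}$ restricted to multilinear monomials is a derivation plus a second-order part, i.e.\ $\mathcal L=\sum_i\mu_i\partial_{X_i}+\sum_{i<h}c_{ih}\partial_{X_i}\partial_{X_h}$ acting on multilinear polynomials. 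Apply this to the generating function: with $Z(x)=\E\exp\{\sum_jx_jX_j\}$ (formally, per Footnote~\ref{ftn:kappa}), $\partial_tZ=\E[(\sum_ix_i\mu_i+\sum_{i<h}x_ix_hc_{ih})e^{\sum x_jX_j}]$ (the diagonal $i=h$ terms only produce $x_i^2$, irrelevant for the multilinear coefficient). Hence $\partial_t\log Z=\E_x[\sum_ix_i\mu_i+\sum_{i<h}x_ix_hc_{ih}]$, where $\E_x$ is the tilted expectation. Taking $\partial_{x_1}\cdots\partial_{x_p}$ at $x=0$ and using the standard fact that $\partial_{x_{B}}\E_x[Y]|_{x=0}=\kappa(Y,\{X_j\}_{j\in B})$, i.e.\ derivatives of a tilted expectation give joint cumulants, the term $x_i\mu_i$ yields $\kappa(\mu_i,\{X_j\}_{j\ne i})$ and the term $x_ix_hc_{ih}$ yields $\kappa(c_{ih},\{X_j\}_{j\ne i,h})$, which is exactly \eqref{eq:Ito_cumulants}.

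The only real obstacle is justification: the moment generating function may not exist, so the identity must be read at the level of formal power series. Since all coefficients involved are finite mixed moments, it holds as identities between polynomials in moments. I would therefore verify the tilted-cumulant fact and the It\^o step coefficientwise, using the finiteness of all moments, dominated convergence for the time derivative, and the vanishing expectation of the stochastic integrals.
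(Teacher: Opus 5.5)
Your proposal is correct, but it organizes the combinatorics differently from the paper.

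\textbf{The paper's route.} The paper never passes to generating functions. It writes the cumulant via the moment--cumulant formula \eqref{eq:cumulant_moment}. It then differentiates each product $\prod_{B\in\mathcal P}\E\prod_{j\in B}X_{j,t}$ by the product rule together with It\^o's formula for a single block, which is your first step. Finally it regroups the partition sum directly:
\begin{itemize}
\item For fixed $i$, the terms in which $X_{i,t}$ is replaced by $\mu_{i,t}$ inside its block are, partition by partition, the moment--cumulant expansion of $\kappa(\mu_{i,t},\{X_{j,t}\}_{j\neq i})$.
\item For fixed $i<h$, the partitions of $[p]$ in which $i$ and $h$ share a block correspond bijectively to partitions of the $(p-1)$-element set obtained by merging $i$ and $h$. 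This bijection preserves the number of blocks, so the weights $(-1)^{\mathfrak n(\mathcal P)-1}(\mathfrak n(\mathcal P)-1)!$ match, and the sum is exactly $\kappa(c_{ih,t},\{X_{j,t}\}_{j\neq i,h})$.
\end{itemize}

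\textbf{Your route.} You apply It\^o to $\exp\{\sum_j x_jX_{j,t}\}$. You identify $\partial_t\log Z$ with a tilted expectation of $\sum_i x_i\mu_{i,t}+\sum_{i<h}x_ix_hc_{ih,t}$, correctly discarding the $x_i^2$ terms. You then convert the coefficient of $x_1\cdots x_p$ into cumulants through $\partial_{x_B}\E_x[Y]|_{x=0}=\kappa(Y,\{X_j\}_{j\in B})$.

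\textbf{Comparison.} The paper's regrouping is a finite, elementary bookkeeping argument. Yours is more conceptual and would adapt immediately to other generators. However, its rigor rests on formal-series facts you should make explicit. Both are easy:
\begin{itemize}
\item Working modulo the ideal $(x_1^2,\dots,x_p^2)$ turns everything into a finite computation involving only multilinear moments, which is exactly your first step.
\item The tilted-expectation identity follows from $\E_x[Y]=\partial_y\log\E\exp\{yY+\sum_jx_jX_j\}|_{y=0}$ together with \eqref{eq:def_cum}.
\end{itemize}
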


The proof of Lemma~\ref{lem:Ito_cumulants} is presented in Appendix~\ref{app:Ito_cumulants}. In the proof of Lemma~\ref{lem:master_Xi} we take $X_{j,t}:=\Delta_t(\bm{k}_j)\overline{\Delta_t(\bm{l}_j)}$ for $j\in[p]$. The remaining calculations and estimates are the same as in the proof of Lemma~\ref{lem:dynamic_master}, we omit them as well.  This finishes the proof of Lemma~\ref{lem:master_Xi}.\nc

\smallskip

Now we turn to the proof of \eqref{eq:kappapq_local}. We fix $p,q\in\N\cup\{0\}$ with $p+q\ge 3$ and differentiate $\kappa_{p,q}(\Delta_t(1))$. For a random variable $X$ and $u,v\in\N\cup\{0\}$ it is convenient to denote by $\{X\}_{u,v}$ the set with $u$ elements $X$ and $v$ elements $\overline{X}$. Using this notation along with \eqref{eq:Tr_dif}, \eqref{eq:cov_process_traces}, \eqref{eq:cov_process_traces_star} and~\eqref{eq:Ito_cumulants}, we get
\begin{equation}
\begin{split}
\frac{\dif}{\dif t} \kappa_{p,q}\left(\Delta_t(1)\right) =& f_t(p,q)\kappa_{p,q}(\Delta_t(1)) + \widetilde{\mathcal{F}}_t(p,q),\qquad f_t(p,q):=\frac{p+q}{2} + p m_{2,t} + q\overline{m}_{2,t},\\
\widetilde{\mathcal{F}}_t(p,q):= &\frac{1}{N}\left(p\kappa\left(\Delta_t(1,2),\{\Delta_t(1)\}_{p-1,q}\right) + q \kappa\left(\overline{\Delta_t(1,2)},\{\Delta_t(1)\}_{p,q-1}\right)\right)\\
+&\frac{\bm{1}_{\beta=1}}{N}\left(p\kappa\left(\Delta_t(3),\{\Delta_t(1)\}_{p-1,q}\right)+q\kappa\left(\overline{\Delta_t(3)},\{\Delta_t(1)\}_{p,q-1}\right)\right)\\
+&\frac{2}{\beta N}\left( \frac{p(p-1)}{2}\kappa\left( \Delta_t(4),\{\Delta_t(1)\}_{p-2,q}\right) + \frac{q(q-1)}{2}\kappa\left( \overline{\Delta_t(4)},\{\Delta_t(1)\}_{p,q-2}\right)  \right)\\
+& \frac{2pq}{\beta N} \kappa\left( \Tr\left[(G_tG^*_t)^2\right],\{\Delta_t(1)\}_{p-1,q-1}\right).
\end{split}
\label{eq:kappa_dif}
\end{equation}
Denote $n:=p+q$. We first bound $\widetilde{F}_t(p,q)$ in terms of $\Xi_t$ and then apply~\eqref{eq:Xi_improved}:
\begin{equation}
\begin{split}
&\left\vert\widetilde{F}_t(p,q)\right\vert \lesssim \frac{1}{N}\Xi^{(n)}\!(n+1,n+2) + \frac{1}{N}\Xi^{(n-1)}\!(n-1,n+2)\\
&\quad +\frac{1}{N\eta^2_t}\Xi^{(n-1)}\!(n-1,n) + \frac{1}{N\eta^3_t}\Xi^{(n-1)}\!(n-1,n-1)\lesssim  \frac{N^\xi}{N\eta}\left(\frac{1}{N^{(n-3)/2}} + \frac{1}{(N\eta)^{n-3}}\right)\frac{1}{\eta_t^{n+1}}. 
\end{split}
\label{eq:F_cum_bound}
\end{equation}
Solving the equation in the first line of~\eqref{eq:kappa_dif} as it was done in~\eqref{eq:E_dif_solved}, and using~\eqref{eq:F_cum_bound} along with \eqref{eq:int_eta} and the bound $|f_t(p,q)|\lesssim 1$, we obtain
\begin{equation}
\kappa_{p,q}(\Delta_t(1)) = \exp\left\lbrace \int_0^t f_r(p,q)\dif r\right\rbrace \kappa_{p,q}(\Delta_0(1)) + \frac{N^\xi}{N\eta_t}\frac{1}{\eta_t^S}\mathcal{O}\left(\frac{1}{N^{(n-3)/2}} + \frac{1}{(N\eta)^{n-3}}\right).
\label{eq:kappa_pq_prefin_id}
\end{equation}
It remains to apply Proposition~\ref{prop:global} for $\kappa_{p,q}(\Delta_0(1))$ and observe that
\begin{equation}
\begin{split}
&\frac{\dif}{\dif t} K_{p,q}(z_t) \kappa_{p+q}^{\dif}(t) = f_t(p,q)\kappa_{p+q}^{\dif}(t), \quad\text{since}\\
&\kappa_{p+q}^{\dif}(t) = \ee^{-t(p+q)/2}\kappa_{p+q}^\dif(0)\quad\text{and}\quad \frac{\dif}{\dif t} m_{2,t}=\left(1+m_{2,t}\right)m_{2,t}.
\end{split}
\label{eq:dif_main}
\end{equation}
Solving the differential equation in the first line of \eqref{eq:dif_main} we get
\begin{equation}
\exp\left\lbrace \int_0^t f_r(p,q)\dif r\right\rbrace K_{p,q}(z_0) \kappa_{p+q}^\dif(0) = K_{p,q}(z_t) \kappa_{p+q}^\dif(t).
\end{equation}
This finishes the proof of~\eqref{eq:kappapq_local}. The proof of~\eqref{eq:main_p0_local} is identical to the one presented above, with the only difference that \eqref{eq:Theta_improved} is used instead of~\eqref{eq:Xi_improved}.
 This completes the proof of Proposition~\ref{prop:kappa_local}.\hfill\qed\nc

\subsection{Proof of Proposition~\ref{prop:E_local}(ii)}\label{sec:kappa_local} 

The proof of Proposition~\ref{prop:E_local}(ii) closely parallels the proof of  \eqref{eq:kappapq_local}
presented in the previous section, which is based on Proposition~\ref{prop:Theta_improved} and the explicit calculation \eqref{eq:kappa_dif}. We keep \eqref{eq:kappa_dif} unchanged and use the following size bound instead of Proposition~\ref{prop:Theta_improved}.

\begin{lemma}\label{lem:kappa_eta} Fix $p\ge 3$, $n_{1,j},n_{2,j}\in \N\cup\{0\}$ and multi-indices $\bm{k}_j\in\N^{n_{1,j}}$, $\bm{l}_j\in\N^{n_{2,j}}$ for $j\in [p]$. Denote $n:=\sum_{j=1}^p (n_{1,j}+n_{2,j})$ and assume that $n<2p-2$. Then for $\beta=2$ we have
\begin{equation}
\Big|\kappa\Big(\left\lbrace \eta_t^{|\bm{k}_j|+|\bm{l}_j|}\Delta_t(\bm{k}_j)\overline{\Delta_t(\bm{l}_j)}\right\rbrace_{j=1}^p\Big)\Big|\lesssim \eta_t,
\label{eq:kappa_eta}
\end{equation}
uniformly in $t\in [0,T]$ and $z=z_T\in\mathcal{D}$.
\end{lemma}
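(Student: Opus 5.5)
Since each argument is normalized by $\eta_t^{|\bm{k}_j|+|\bm{l}_j|}$, \eqref{eq:kappa_eta} is equivalent to
\begin{equation*}
\Big|\kappa\Big(\big\lbrace \Delta_t(\bm{k}_j)\overline{\Delta_t(\bm{l}_j)}\big\rbrace_{j=1}^p\Big)\Big|\lesssim \eta_t^{-(S-1)},\qquad S:=\sum_{j}(|\bm{k}_j|+|\bm{l}_j|).
\end{equation*}
We plan to reduce this to moments via the cumulant--moment relation: the cumulant is a finite signed sum of products of expectations $\E^{(W)}[\prod_{j\in B}\Delta_t(\bm{k}_j)\overline{\Delta_t(\bm{l}_j)}]$ over partitions $\pi$ of $[p]$, $B\in\pi$. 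Each expectation is written as $\E^{(G)}[\cdot]+(\E^{(W)}-\E^{(G)})[\cdot]$. By Proposition~\ref{prop:E_local}(i), the difference term is $\mathcal{O}(\eta_t)$ after normalization. By Proposition~\ref{prop:E_Gauss} applied at $z_t$ (valid since $W_t$ is GUE under $\E^{(G)}$ and $z_t\in\mathcal{D}$, as $\eta_t\ge\eta_T$ and $\Re z_t$ stays in the bulk), $\E^{(G)}[\cdot]=\Phi_0+\mathcal{O}(\eta_t)$. Every normalized moment is $\mathcal{O}(1)$ by \eqref{eq:E_1bound} and \eqref{eq:E_local}. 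Expanding the products therefore gives
\begin{equation*}
\kappa=\sum_{\pi}c_\pi\prod_{B\in\pi}\Phi_0\big(\bm{k}_B,\bm{l}_B\big)+\mathcal{O}(\eta_t),
\end{equation*}
where $\bm{k}_B,\bm{l}_B$ are the concatenations over $j\in B$. The task thus reduces to the purely combinatorial claim that this \emph{$\Phi_0$-cumulant} vanishes whenever $n<2p-2$.

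For the combinatorial claim we use the explicit formula \eqref{eq:Phi_0_expl}. It shows that $\Phi_0$ is the moment functional of a family in which each factor $\eta\Tr[G^k-m_k]$ behaves like $c_k Z$ and each conjugate like $\bar c_l\bar Z$, where $c_k=(\ii/2)^{k}/(k-1)!\cdot(\text{const})$ and $Z$ is a standard complex Gaussian. More precisely, $\Phi_0(\bm{k},\bm{l})=\sum_{\sigma\in S_n}\prod_j C(k_j,l_{\sigma(j)})$ with pair weights $C(k,l)=\frac{\ii^{k-l}}{2^{k+l}}\frac{(k+l-1)!}{(k-1)!(l-1)!}$. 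This is Wick's formula for a centered complex Gaussian family $\{Y_k\}$ with $\E Y_kY_{k'}=0$ and $\E Y_k\overline{Y_l}=C(k,l)$. One must check that this covariance is consistent, i.e.\ positive semidefinite, or else work with the Wick rule formally, which suffices for the algebra. Hence $\Phi_0(\bm{k},\bm{l})=\E[\prod_i Y_{k_i}\prod_j\overline{Y_{l_j}}]$, and the $\Phi_0$-cumulant equals the joint cumulant $\kappa\big(\{\prod_{i}Y_{k_{j,i}}\prod_{i}\overline{Y_{l_{j,i}}}\}_{j=1}^p\big)$ of products of jointly Gaussian variables.

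By the standard Leonov--Shiryaev / Wick formula for cumulants of products of Gaussians, this cumulant equals a sum over pairings of all $n$ factors whose graph, obtained by contracting the factors of each block $j$ into one vertex, is \emph{connected}. A connected graph on $p$ vertices needs at least $p-1$ edges, i.e.\ $n\ge 2p-2$ factors. Under $n<2p-2$ no such pairing exists, so the $\Phi_0$-cumulant vanishes identically. This yields \eqref{eq:kappa_eta}. Only $\beta=2$ is used, in Proposition~\ref{prop:E_Gauss} and in the absence of $YY$-pairings.

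The main obstacle is making the Gaussian-model identification rigorous at the level of the formal Wick calculus rather than a genuine probability space. To handle it we will argue directly that both the moment--cumulant combinatorics and the connectedness argument only use the multiplicative pairing structure of \eqref{eq:Phi_0_expl}, namely $\Phi_0$ as a sum over perfect matchings between $\bm{k}$- and $\bm{l}$-entries. The vanishing then follows from Möbius inversion over set partitions, which turns the sum over all matchings into the sum over matchings connecting all $p$ blocks.
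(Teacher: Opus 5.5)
Your proof is correct. It shares its first half with the paper and differs in the key vanishing step.

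\textbf{Common first step.} Reducing the cumulant to $\widehat\Phi_0+\mathcal{O}(\eta_t)$ via the cumulant--moment relation and Propositions~\ref{prop:E_Gauss} and~\ref{prop:E_local}(i) is exactly what the paper does.

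\textbf{The paper's route to $\widehat\Phi_0=0$.} The paper argues indirectly. It invokes the a priori cumulant bound \eqref{eq:Xi_improved} of Proposition~\ref{prop:Theta_improved}; after the $\eta_t$-normalization this reads $N^{\xi}\bigl(N^{-1/2}+(N\eta_t)^{-1}\bigr)^{2p-2-n}$. It evaluates this at $t=T$ with $\eta_T\le N^{-\xi_0}$, extending the characteristic flow if necessary. Since $\widehat\Phi_0$ is $N$-independent and bounded by a quantity tending to zero, it must vanish.

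\textbf{Your route.} You carry out the direct algebraic verification that the paper mentions but deliberately skips. Formula \eqref{eq:Phi_0_expl} is a sum over bijections between $\bm{k}$- and $\bm{l}$-entries with multiplicative pair weights $C(k,l)$. Each matching splits uniquely into its connected components, so the moments $\Phi_0(\bm{k}_B,\bm{l}_B)$ are sums over block partitions of products of connected sums. Uniqueness of the moment--cumulant inversion then identifies $\widehat\Phi_0$ with the sum over pairings whose block graph is connected. A connected graph on $p$ vertices needs $p-1$ edges, i.e.\ $n\ge 2p-2$, so this sum is empty under the hypothesis. The argument is fully formal.

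\textbf{Comparison.} The paper's route is shorter given the dynamic machinery already in place, but it relies on Proposition~\ref{prop:Theta_improved}, whose proof is only sketched, and on the flow-extension trick. Yours is self-contained and independent of the dynamical cumulant bounds. It explains the cancellation as the Wick (Gaussian) structure of the limit. It also gives $\widehat\Phi_0$ explicitly as a sum over connected pairings when $n\ge 2p-2$; for instance, it reproduces at once the constants $(8\ii)^{-1}$ used in the case $p+q=3$.

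\textbf{One correction.} Your heuristic that each factor behaves like $c_k Z$ for a single complex Gaussian $Z$ is wrong. The matrix $C(k,l)$ contains $(k+l-1)!$ and is not of rank one. The correct model is $Y_k=\eta^k\partial_z^{k-1}\mathcal{Z}(z)/(k-1)!$, where $\mathcal{Z}$ is a centered complex Gaussian field with $\E\,\mathcal{Z}(z)\mathcal{Z}(w)=0$ and $\E\,\mathcal{Z}(z)\overline{\mathcal{Z}(w)}=-(z-\bar w)^{-2}$. This covariance is a multiple of the Bergman kernel of the upper half-plane. That also settles positive semidefiniteness, though, as you note, the formal pairing argument does not need it.
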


\begin{proof}[Proof of Lemma~\ref{lem:kappa_eta}] First, we combine Propositions~\ref{prop:E_Gauss} and~\ref{prop:E_local}(i) and get
\begin{equation}
\E \left[\eta_t^{|\bm{k}|+|\bm{l}|}\Delta_t(\bm{k}) \overline{\Delta_t(\bm{l})}\right] = \Phi_0(\bm{k},\bm{l}) + \mathcal{O}(\eta_t),
\label{eq:E_eta}
\end{equation}
for any fixed multi-indices $\bm{k}$ and $\bm{l}$. Here $\Phi_0$ is a constant given by~\eqref{eq:Phi_0_expl}, which does not depend on $N$ and $z_t$. Next, we use the well-known cumulant-moment relations
\begin{equation}
\kappa\left(\{Y_j\}_{j=1}^p\right) = \sum_{\mathcal{P}} (-1)^{\mathfrak{n}(\mathcal{P})-1}(\mathfrak{n}(\mathcal{P})-1)! \prod_{B\in \mathcal{P}} \E \prod_{j\in B} Y_j,
\label{eq:cumulant_moment}
\end{equation}
for $Y_j:=\eta_t^{|\bm{k}_j|+|\bm{l}_j|}\Delta_t(\bm{k}_j)$. In \eqref{eq:cumulant_moment}, $\mathcal{P}$ is the family of all partitions of the set $[p]$, and $\mathfrak{n}(\mathcal{P})$ is the number of blocks in $\mathcal{P}$. Together with \eqref{eq:E_eta} applied to each of the expectations in the rhs. of \eqref{eq:cumulant_moment}, this implies 
\begin{equation}
\kappa\Big(\left\lbrace \eta_t^{|\bm{k}_j|+|\bm{l}_j|}\Delta_t(\bm{k}_j)\overline{\Delta_t(\bm{l}_j)}\right\rbrace_{j=1}^p\Big) = \widehat{\Phi}_0\left(\{(\bm{k}_j,\bm{l}_j)\}_{j=1}^p\right) + \mathcal{O}(\eta_t),
\label{eq:kappa_Phi}
\end{equation}
for some constant $\widehat{\Phi}_0$ which does not depend on $N$ and $z_t$. 

 To prove Lemma~\ref{lem:kappa_eta} it remains to show that $\widehat{\Phi}_0=0$. Though $\widehat{\Phi}_0$ can be explicitly represented in terms of $\Phi_0$ from \eqref{eq:Phi_0_expl}, and thus  $\widehat{\Phi}_0=0$ may
 be verified directly via an algebraic cancellation, we proceed with a shorter indirect proof. 
We have from \eqref{eq:Xi_improved} that
\begin{equation}
\Big|\kappa\Big(\left\lbrace \eta_t^{|\bm{k}_j|+|\bm{l}_j|}\Delta_t(\bm{k}_j)\overline{\Delta_t(\bm{l}_j)}\right\rbrace_{j=1}^p\Big)\Big|\lesssim N^\xi\left(\frac{1}{N^{1/2}} + \frac{1}{N\eta_t}\right)^{2p-2-n},
\label{eq:kappa_Phi_aux}
\end{equation}
for any fixed $\xi>0$. One may further assume that $\eta_T \le N^{-\xi_0}$ for some fixed (small) $\xi_0>0$. If this is not the case, the trajectory of the characteristic flow \eqref{eq:char_flow} can be extended so that this bound holds. Combining \eqref{eq:kappa_Phi} with \eqref{eq:kappa_Phi_aux} and taking $t:=T$, we thus obtain
\begin{equation}
\left\vert \widehat{\Phi}_0\left(\{(\bm{k}_j,\bm{l}_j)\}_{j=1}^p\right)\right\vert \lesssim \eta_T + N^\xi\left(\frac{1}{N^{1/2}} + \frac{1}{N\eta_T}\right)^{2p-2-n}\lesssim N^{-\xi_0/2}.
\label{eq:Phi_hat_bound}
\end{equation}
In the last bound we used that $n-2p+2<0$ and took $\xi<\xi_0/2$. Since the lhs. of \eqref{eq:Phi_hat_bound} does not depend on $N$, it equals to zero. Together with \eqref{eq:kappa_Phi} this finishes the proof of Lemma~\ref{lem:kappa_eta}.
\end{proof}

Now we prove Proposition~\ref{prop:E_local}(ii) and distinguish between the two cases: $p+q\ge 4$ and $p+q=3$.

\smallskip

\noindent\underline{$p+q\ge 4$.} We apply the resolvent identity from Lemma~\ref{lem:resolvent_id} to the last line of~\eqref{eq:kappa_dif} and observe that the condition of Lemma~\ref{lem:kappa_eta} holds for all cumulants contributing to $\widetilde{\mathcal{F}}_t(p,q)$. Therefore, Lemma~\ref{lem:kappa_eta} implies that
\begin{equation}
\big|\widetilde{\mathcal{F}}_t(p,q)\big| \lesssim \left(N\eta_t^{p+q+1}\right)^{-1},
\label{eq:F_gain_eta}
\end{equation}
uniformly in $t\in [0,T]$ and $z=z_T\in\mathcal{D}$.  Arguing as in \eqref{eq:kappa_pq_prefin_id} and using \eqref{eq:F_gain_eta} instead of~\eqref{eq:F_cum_bound}, we get
\begin{equation}
\kappa_{p,q}(\Delta_t(1)) = \exp\left\lbrace \int_0^t f_r(p,q)\dif r\right\rbrace \kappa_{p,q}(\Delta_0(1)) + \mathcal{O}\left((N\eta_t^{p+q})^{-1}\right).
\label{eq:kappa_pq_prefin_N_id}
\end{equation}
The first term in the rhs. of \eqref{eq:kappa_pq_prefin_N_id} has an upper bound of order $N^{-1}$ by Proposition~\ref{prop:global} and \eqref{eq:m_bound}. This completes the proof of Proposition~\ref{prop:E_local}(ii) for $p+q\ge 4$.

\smallskip

\noindent\underline{$p+q=3$.} Without loss of generality we may assume that $p\ge q$. In the case $p=3,q=0$, Proposition~\ref{prop:E_local}(ii) immediately follows from \eqref{eq:main_p0_local}, so the only non-trivial case is $p=2,q=1$. We have from \eqref{eq:kappa_dif} that
\begin{equation}
\begin{split}
\widetilde{\mathcal{\mathcal{F}}}_t(2,1) = &\frac{1}{N}\left( 2\kappa\left(\Delta_t(1,2),\Delta_t(1),\overline{\Delta_t(1)}\right) + \kappa\left(\Delta_t(1),\Delta_t(1),\overline{\Delta_t(1,2)}\right)\right)\\
+&\frac{1}{N}\left( \kappa\left(\Delta_t(4),\overline{\Delta_t(1)}\right) + 2\kappa\left(\Tr \left[(G_tG^*_t)^2\right],\Delta_t(1)\right)\right).
\end{split}
\label{eq:F_21}
\end{equation}
Note that Lemma~\ref{lem:kappa_eta} is not applicable to the cumulants in the rhs. of~\eqref{eq:F_21},  since the condition $n<2p-2$ is violated. In fact, the smallness of $\big|\widetilde{\mathcal{\mathcal{F}}}_t(2,1)\big|$ does not come from smallness of each of the cumulants, but from the cancellation between them. We now identify this cancellation, at least to leading order. Using the cumulant-moment relations~\eqref{eq:cumulant_moment},  \eqref{eq:E_eta} and the explicit formula for $\Phi_0$ from \eqref{eq:Phi_0_expl}, we get
\begin{equation}
\begin{split}
\eta_t^5\kappa\left(\Delta_t(1,2),\Delta_t(1),\overline{\Delta_t(1)}\right) &= \mathcal{O}(\eta_t),\\
\eta_t^5\kappa\left(\Delta_t(1),\Delta_t(1),\overline{\Delta_t(1,2)}\right)&= \Phi_0\left((1,1),(1,2)\right) +\mathcal{O}(\eta_t) = (8\ii)^{-1} +\mathcal{O}(\eta_t),\\
\eta_t^5\kappa\left(\Delta_t(4),\overline{\Delta_t(1)}\right)&= \Phi_0(4,1)+\mathcal{O}(\eta_t) =(8\ii)^{-1}+\mathcal{O}(\eta_t),\\
\eta_t^5\kappa\left(\Tr \left[(G_tG^*_t)^2\right],\Delta_t(1)\right)&= -(4\ii)^{-1}\eta_t^2\kappa\left(\overline{\Delta_t(1)},\Delta_t(1)\right) -4^{-1}\eta_t^3\kappa\left(\overline{\Delta_t(2)},\Delta_t(1)\right) +\mathcal{O}(\eta_t)\\
&= -(4\ii)^{-1}\Phi_0(1,1) -4^{-1}\Phi_0(1,2) +\mathcal{O}(\eta) = -(8\ii)^{-1}+\mathcal{O}(\eta_t).
\end{split}
\end{equation}
Together with \eqref{eq:F_21} this gives
\begin{equation}
\big|\widetilde{\mathcal{\mathcal{F}}}_t(2,1)\big|\lesssim (N\eta_t^4)^{-1}.
\end{equation}
The remaining details are the same as in the previous case and thus are omitted.

\section{Multiple moment matching: proof of Lemma~\ref{lem:moment_match}}\label{sec:moment_match}

Without loss of generality assume that $\Gamma$ is even (otherwise, replace $\Gamma$ by $\Gamma+1$). We start with the somewhat simpler real case, then we prove the complex case as well.

\noindent\underline{Real case.} Denote\footnote{Here $m\in\N$ should not be confused with $m(z)$ introduced in \eqref{eq:def_m}, the latter one does not appear in Section~\ref{sec:moment_match}.} $\Gamma=2m$ and let $H_{2m}$ be the Hankel matrix associated to the first $2m$ moments of $\chi$:
\begin{equation*}
H_{2m}:=\left( \E \chi^{i+j}\right)_{i,j=0}^{m}\in  \R^{(m+1)\times (m+1)}.
\end{equation*}
For any non-zero vector $\bm{x}=(x_i)_{i=0}^m\in\C^{m+1}\setminus\{0\}$ we have
\begin{equation*}
\langle \bm{x}, H_{2m}\bm{x}\rangle = \E \left\vert P(\chi) \right\vert^2 >0,\quad \text{where}\quad P(\chi):= \sum_{j=0}^m x_j \chi^j.
\end{equation*}
In the last step we used the assumption that the support of $\mu$ contains at least $m+1$ point, so $\mu$ cannot be supported on the roots of the polynomial $P$ of degree at most $m$. Thus, $H_{2m}$ is positive definite, i.e. there exists $c_0 = c_0(m) >0$ such that $H_{2m}\ge c_0$ in the sense of quadratic forms.

Consider any sequence of real numbers $s_j\in [\E \chi^j - c_1, \E \chi^j + c_1]$, $j=0,1,\ldots,2m$, $0<c_1<c_0$. Since $H_{2m}\ge c_0$, for sufficiently small $c_1=c_1(c_0,m)>0$ it holds that 
\begin{equation}
\widetilde{H}_{2m}^{(1)}\ge c_0/2,\quad\text{where}\quad \widetilde{H}_{2m}^{(1)}:=(s_{i+j})_{i,j=0}^m\in \R^{(m+1)\times (m+1)}
\label{eq:H_bound1}
\end{equation}
by continuity. Moreover, for sufficiently large $L>0$ we have
\begin{equation}
\widetilde{H}_{2m}^{(2)}\ge c_0/2,\quad\text{where}\quad\widetilde{H}_{2m}^{(2)} = \widetilde{H}_{2m}^{(2)}(L) :=\left( s_{i+j} -L^{-2}s_{i+j+2} \right)_{i,j=0}^{m-1}\in \R^{m\times m}.
\label{eq:H_bound2}
\end{equation}
By \cite[Theorem~3.1]{moment_problem} (see also \cite{Krein73} for the original result), \eqref{eq:H_bound1} together with \eqref{eq:H_bound2} imply that 
the \emph{truncated  Hausdorff \nc moment problem} for $\{s_j\}_{j=0}^{2m}$ on $[-L,L]$ has a solution, i.e. that there exists a Radon measure $\nu$ supported on $[-L, L]$, such that 
\begin{equation*}
\int_\R x^j\dif\nu(x)=s_j, \quad \text{for}\quad j=0,1,\ldots,2m.
\end{equation*}
Now we choose $\{s_j\}_{j=0}^{2m}$ as a solution to the system of linear equations
\begin{equation}
\sum_{k=0}^j \left[\binom{j}{k}(1-\alpha)^{k/2} \alpha^{(j-k)/2} \E (\chi^G)^{j-k}\right] s_k = \E \chi^j,
\label{eq:s_syst}
\end{equation}
where $\chi^G\sim\mathcal{N}(0,1)$. Since the matrix of this linear system is upper-triangular with non-zero entries on the diagonal, there exists the solution to \eqref{eq:s_syst}. Moreover, it is a small perturbation of $\{\E\chi^j\}_{j=0}^{2m}$ for sufficiently small $\alpha>0$. Let $\chi_\alpha$ be a random variable with the distribution $\nu$, which solves the truncated moment problem for $\{s_j\}_{j=0}^{2m}$. Then \eqref{eq:s_syst} implies that \eqref{eq:moment_match} holds. This finishes the proof of Lemma~\ref{lem:moment_match} in the real case.

\medskip

\noindent\underline{Complex case.}   Let $\mu$ be the distribution of $\chi$. \nc For convenience, we view $\mu$ as a measure on $\R^2$. We start with setting up some notation. For any $s\in \N$, let $\R[x,y]_s$ be the set of all polynomials in variables $x,y$ with real coefficients of degree at most $s$. We also denote
\begin{equation*}
\mathrm{Pos}(K)_s:=\left\lbrace P\in \R[x,y]_s\,:\, P(x,y)\ge 0, \, \forall (x,y)\in K\right\rbrace,
\end{equation*}
for a compact subset $K\subset \R^2$. Finally, for any measure $\nu$ on $\R^2$ and any $\nu$-integrable function $f:\R^2\to \R$ we call $\int_{\R^2} f\dif\nu$ the action of $\nu$ on $f$.

Consider the action of $\mu$ on $\R[x,y]_{2\Gamma+4}$. By the Richter theorem \cite[Theorem~8.1]{moment_problem}   (see also \cite[Satz~4]{Richter57} for the original result) 
there exists a purely atomic measure $\mu_1=\sum_{j=1}^k m_j\delta_{(x_j,y_j)}$, for some $k\in\N$, with the same action on $\R[x,y]_{2\Gamma+4}$,~i.e.
\begin{equation*}
\int P\dif \mu = \int P \dif\mu_1,\quad \forall P\in \R[x,y]_{2\Gamma+4}.
\end{equation*}
Let $L>0$ be sufficiently large, so that $|x_j|,|y_j|< L$, for $j\in [k]$. Denote $K:=[-L,L]^2$ and consider the action of $\mu_1$ on $\R(K)_{\Gamma+2}$. If there exists $P\in \mathrm{Pos}(K)_{\Gamma+2}\setminus \{0\}$ such that $\int P\dif \mu_1=0$, then $\mu_1$ is supported on the zero set of $P$. Therefore,
\begin{equation*}
0=\int P^2 \dif\mu_1 = \int P^2 \dif \mu,
\end{equation*}
where we used that $P^2\in \R[x,y]_{2\Gamma+4}$. However, $P^2(x,y)\ge 0$ for any $x,y\in\R$, so $\mu$ is supported on the zero set of $P^2$. This contradicts to the assumption (ii). In such a way, $\int P\dif \mu_1>0$ for any non-zero $P\in \mathrm{Pos}(K)_{\Gamma+2}$. By a compactness argument, there exists $c_0>0$ such that
\begin{equation*}
\int_K P\dif \mu_1 > c_0,\quad \forall\, P=\sum_{i+j\le \Gamma+2} a_{ij}x^iy^j \in\mathrm{Pos}(K)_{\Gamma+2}\,\,\,\,\text{with}\,\,\,\, |||P|||:=\!\!\!\sum_{i+j\le \Gamma+2} |a_{ij}| =1.
\end{equation*}

Now we present the analogue of \eqref{eq:s_syst} for the 2-dimensional case. Define $\{s_{\gamma_1\gamma_2}\}_{\gamma_1+\gamma_2\le \Gamma+2}$ as the solution to the system 
\begin{equation}
\sum_{\gamma_1=0}^i \sum_{\gamma_2=0}^j \left[\binom{i}{\gamma_1}\binom{j}{\gamma_2} (1-\alpha)^{(\gamma_1+\gamma_2)/2}\alpha^{(i+j-\gamma_1-\gamma_2)/2}\E (\chi^G_1)^{i-\gamma_1}\E (\chi^G_2)^{j-\gamma_2}\right] s_{\gamma_1\gamma_2} = m_{ij}
\label{eq:s_syst2}
\end{equation}
for $i,j\ge 0$, $i+j\le \Gamma+2$, where $m_{i,j}:=\E \left[ (\Re\chi)^i (\Im\chi)^j\right]$ and $\chi_1^G, \chi_2^G$ are independent standard real-valued Gaussian random variables. Since for $\alpha=0$ the matrix of this system coincides with identity, \eqref{eq:s_syst2} has a unique solution in the perturbative regime when $\alpha>0$ is sufficiently small. Moreover, this solution is a small perturbation of $\{m_{ij}\}_{i+j\le \Gamma+2}$. 

Define the linear functional $\mathrm{L}$ on $\R[x,y]_{\Gamma+2}$ by
\begin{equation*}
\mathrm{L}\left(\sum_{i+j\le \Gamma+2} a_{ij}x^iy^j\right):= \sum_{i+j\le \Gamma+2} a_{ij} s_{ij}.
\end{equation*}
For any $P\in \R[x,y]_{\Gamma+2}$ we have
\begin{equation*}
\left\vert \mathrm{L}(P) - \int_K P\dif\mu_1\right\vert\le |||P||| \max_{i+j\le n+2} |s_{ij}-m_{ij}|.
\end{equation*}
In particular, $\mathrm{L}(P)\ge 0$ for any $P\in\mathrm{Pos}(K)_{\Gamma+2}$ for sufficiently small $\alpha>0$.  By \cite[Theorem~8.7]{moment_problem}, this positivity implies that the action of $\mathrm{L}$ restricted to $\R[x,y]_\Gamma$  coincides with the action of some compactly supported on $K$ measure $\nu$, i.e. 
\begin{equation*}
\int_K x^iy^j \dif\nu = s_{ij},\quad i+j\le \Gamma.
\end{equation*}
Now it obviously follows from \eqref{eq:s_syst2} that the random variable $\chi_\alpha$ with distribution $\nu$ satisfies the moment matching condition \eqref{eq:moment_match}.\hfill\qed

\appendix

\section{Proofs of additional technical results}

\subsection{Proof of Lemma~\ref{lem:underline_exp_general}}\label{app:underline}

Denote for short $f_1:=\Tr[\underline{WG}]$. We apply the cumulant-moment relations \eqref{eq:cumulant_moment} for $Y_j:=f_j$ and obtain
\begin{equation}
\kappa\left(\{f_j\}_{j=1}^p\right) = \sum_{\mathcal{P}} (-1)^{\mathfrak{n}(\mathcal{P})-1}(\mathfrak{n}(\mathcal{P})-1)! \prod_{B\in \mathcal{P}} \E \prod_{j\in B} f_j.
\label{eq:cumulant_moment_f}
\end{equation}
Let $B_1\in\mathcal{P}$ be the block containing index $1$. Denote the size of this block by $s=s(\mathcal{P})$ and label its elements by $j_1,j_2,\ldots,j_s$ with $j_1=1$. We apply \eqref{eq:def_under_1} to the product associated to $B_1$ and get
\begin{equation}
\prod_{j\in B_1} f_j = \Tr[\underline{WF}] f_{j_2}\cdots f_{j_s} = \underline{\Tr[WF]f_{j_2}\cdots f_{j_s}} + \sum_{j\in B_1\setminus\{1\}} \widetilde{\E}\left[\Tr[WF] \partial_{\widetilde{W}} f_{j}\right] \prod_{i\in B_1\setminus\{1,j\}} f_j.
\label{eq:under_ext_B1}
\end{equation} 
Fix any index $j_0\in [2,p]$ and consider the sum over all $\mathcal{P}$ in \eqref{eq:cumulant_moment_f} such that $j_0\in B_1$. Apply \eqref{eq:under_ext_B1} to each of these terms and consider only the summands with $\partial_{\widetilde{W}}f_{j_0}$. Summing all these terms back, we obtain the term in the rhs. of the first line of \eqref{eq:underline_exp_general} with $j=j_0$. Thus, it remains to show that 
\begin{equation}
\sum_{\mathcal{P}} (-1)^{\mathfrak{n}(\mathcal{P})-1}(\mathfrak{n}(\mathcal{P})-1)! \prod_{B\in \mathcal{P}\setminus\{B_1\}} \E \bigg[\prod_{j\in B} f_j\bigg] \E\left[\underline{\Tr[WF]f_{j_2}\cdots f_{j_s}}\right]
\label{eq:full_underline_cumulant}
\end{equation}
equals to the second line of \eqref{eq:underline_exp_general}.

In the last factor in the rhs. of \eqref{eq:full_underline_cumulant} we perform the cumulant expansion, \cite[Lemma~3.1]{Knowles20} for $\beta=1$ and \cite[Lemma~3.2]{Knowles20} for $\beta=2$ (see also \cite[Lemma~3.1]{HeKnowles} and \cite[Section~II]{Khorunzhy96}):
\begin{equation}
\E\left[\underline{\Tr[WF]f_{j_2}\cdots f_{j_s}}\right] = \sum_{\ell=3}^L \sum_{a,b=1}^N\sum_{\bm{\alpha}\in\{ab,ba\}^{\ell-1}}\frac{\kappa(ab,\bm{\alpha})}{(\ell-1)!N^{\ell/2}} \E \partial_{\bm{\alpha}} \left[F_{ba} f_{j_2}\cdots f_{j_s}\right]+\mathcal{R}_{L+1}.
\label{eq:cum_expansion_appl}
\end{equation}
Here $L$ is an $N$-independent positive integer which is the same for all $\mathcal{P}$ in \eqref{eq:full_underline_cumulant}, $L$ will be chosen to be sufficiently large in a moment. The error term $\mathcal{R}_{L+1}=\mathcal{R}_{L+1}(B_1)$ admits the bound
\begin{equation}
\left\vert\mathcal{R}_{L+1}(B_1)\right\vert \lesssim \sum_{a,b=1}^N \frac{1}{N^{(L+1)/2}}\sum_{r=0}^\ell \left\lVert\E^{(ab)}\partial_{ab}^r\partial_{ba}^{L-r} \left[F_{ba} f_{j_2}\cdots f_{j_s}\right]\right\rVert_{\infty,ab}.
\end{equation}
Here $\E^{(ab)}$ is the expectation wrt. all entries of $W$ apart from $w_{ab}=\overline{w_{ba}}$, and $\|\cdot\|_{\infty,ab}$ is the infinity-norm in this entry. As $z\in \mathcal{D}_g$ is in the global regime, $\|G(z)\|\lesssim 1$ and thus this norm has an upper bound of order $N^n$ with $n:=\sum_{j=1}^p(n_{1,j}+n_{2,j})$. Therefore,
\begin{equation}
\left\vert\mathcal{R}_{L+1}\right\vert\lesssim N^{-(L+1)/2 +n+2}.
\end{equation}
Choosing a sufficiently large $N$-independent $L\in\N$, we obtain an upper bound of order $N^{-D}$ on $\left\vert\mathcal{R}_{L+1}\right\vert$.

Distributing the derivatives over the factors in \eqref{eq:cum_expansion_appl}, we get
\begin{equation}
\partial_{\bm{\alpha}} \left[F_{ba} f_{j_2}\cdots f_{j_s}\right] = \sum_{P_s(\bm{\alpha})} \partial_{\bm{\alpha}_1} [F_{ba}] \partial_{\bm{\alpha}_{j_2}} [f_{j_2}]\cdots \partial_{\bm{\alpha}_{j_s}} [f_{j_s}].
\label{eq:alpha_deriv}
\end{equation}
We further extend $P_s(\bm{\alpha})$ to the partition $P_p(\bm{\alpha})$ consisting of $n$ blocks by setting $\bm{\alpha}_j:=\emptyset$ for $j\notin B_1$. Now we fix $\ell\in [3,L]$, $a,b\in [N]$, $\bm{\alpha}\in \{ab,ba\}^{\ell-1}$ and a partition $P_p(\bm{\alpha})$. Combine \eqref{eq:cum_expansion_appl} with \eqref{eq:alpha_deriv} and consider only the terms with selected $\ell,a,b,\bm{\alpha}$ and $P_p(\bm{\alpha})$. The argument similar to the one below~\eqref{eq:under_ext_B1} shows that the sum of these terms equals to the corresponding term in the second line of \eqref{eq:underline_exp_general}. This finishes the proof of Lemma~\ref{lem:underline_exp_general}.\hfill\qed

\subsection{Proof of Lemma~\ref{lem:Upsilon_0}}\label{app:Upsilon_0}

Due to the bound in the Gaussian case \eqref{eq:E_1bound}, Lemma~\ref{lem:Upsilon_0} is equivalent to the following statement.

\begin{lemma}\label{lem:E_global} Let $W$ be an $N\times N$ Wigner matrix satisfying Assumption~\ref{ass:momass}. Fix a (small) $\varepsilon>0$ and a (large) $C>0$. It holds that
\begin{equation}
\E^{(W)} \left[\Delta(\bm{k})\overline{\Delta(\bm{l})}\right]\lesssim 1.
\end{equation}
uniformly in $z\in \C$ with $|\Im z|\ge \varepsilon$ and $|z|\le C$, for any fixed $n_1,n_2\in\N\cup\{0\}$ and multi-indices $\bm{k}\in\N^{n_1}$, $\bm{l}\in\N^{n_2}$.
\end{lemma}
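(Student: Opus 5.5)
We prove the bound $|\E^{(W)}[\Delta(\bm{k})\overline{\Delta(\bm{l})}]|\lesssim 1$ by induction on $|\bm{k}|+|\bm{l}|$. The model is the argument for \eqref{eq:E_1bound} in the Gaussian case, combined with the cumulant-expansion technology of Section~\ref{sec:global} to control the non-Gaussian contributions. In the global regime $\eta\sim 1$ all factors $\eta^{-1}$, $|1-m^2|^{-1}$ and $|m_k|$ are harmless. The a priori bound \eqref{eq:Delta_a_priori} gives $|\Delta(\bm{k})|\prec 1$, so the whole task is to remove the factor $N^\xi$. As in Section~\ref{sec:global}, mixed products $G^k(G^*)^l$ are reduced to pure powers by Lemma~\ref{lem:resolvent_id} at harmless cost. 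By the symmetry between $G$ and $G^*$ we may assume $\bm{k}\neq\emptyset$.

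\emph{Step 1: expansion of $\Delta(k_1)$.} Expand $\Delta(k_1)$ via \eqref{eq:Delta_k}, and extend the underline to the other traces using \eqref{eq:under_extension_basic} (together with its $G^*$ analogue). This reproduces the identity \eqref{eq:E_expansion}, except that the fully underlined term
\[
\E\big[\underline{\Tr[WG^{k_1}]\Delta(\bm{k}^{[1]})\overline{\Delta(\bm{l})}}\big]
\]
no longer vanishes. The remaining terms of \eqref{eq:E_expansion} fall into three groups:
\begin{itemize}
\item Terms with strictly smaller total degree $|\bm{k}|+|\bm{l}|$ and deterministic prefactors. These are bounded by the induction hypothesis.
\item Terms carrying a factor $(N\eta)^{-1}\sim N^{-1}$. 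By \eqref{eq:Delta_a_priori} these are $O(N^{-1+\xi})$.
\item The term $\eta\,\E((k_1-1,\bm{k}^{[1]}),\bm{l})$, which has lower degree and is therefore also covered by induction.
\end{itemize}
The base case, in which both multi-indices are empty, is trivial.

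\emph{Step 2: the underlined term.} This is the main obstacle. Apply the cumulant expansion \eqref{eq:cum_expansion_appl} with $F=G^{k_1}\Delta(\bm{k}^{[1]})\overline{\Delta(\bm{l})}$. Following Section~\ref{sec:cum_exp_last_term}, we must show that the resulting terms
\[
N^{-\ell/2}\sum_{a,b}\kappa(ab,\bm{\alpha})\,\E\,\partial_{\bm{\alpha}}\big[(G^{k_1})_{ba}\,\Delta(\bm{k}^{[1]})\overline{\Delta(\bm{l})}\big],\qquad \ell\ge3,
\]
are $O(1)$. We split according to $a=b$ versus $a\neq b$ and according to which factors the derivatives hit.
\begin{itemize}
\item \emph{Derivatives hitting a trace.} Each such derivative produces an off-diagonal entry, except when $a=b$. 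The bounds \eqref{eq:final_term_trick1} and \eqref{eq:alpha1_bound} then give $N^{-1/2+\xi}$ or better.
\item \emph{All derivatives hitting $(G^{k_1})_{ba}$, with $a=b$.} These give $N\cdot N^{-3/2}N^\xi$, which is small.
\item \emph{All derivatives hitting $(G^{k_1})_{ba}$, with $a\neq b$ and $\ell=3$.} This is the delicate case. The sum $N^{-3/2}\sum_{a\neq b}\partial^2(G^{k_1})_{ba}$ consists of products of three entries, at least one of them off-diagonal. Counting only with the isotropic law \eqref{eq:kG_iso} gives $N^{2-3/2-1/2+\xi}=N^{\xi}$, which is not enough. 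Here I would argue as in \eqref{eq:before_iso_resum1}--\eqref{eq:iso_resum}: replace the diagonal entries by their deterministic approximations $m_u$ plus fluctuations, where the fluctuations gain $N^{-1/2}$. The remaining sum over the free index is then an isotropic quantity $(G^{k})_{\bm{v}b}$ with $\|\bm{v}\|\lesssim\sqrt N$, which is again of size $O(N^{-1/2+\xi})$.
\item \emph{Higher orders.} Terms with $\ell\ge 4$ are bounded by $N^{2-2+\xi}$ times an extra gain from off-diagonality, and a similar resummation is used wherever needed. Terms with $\ell$ large are handled exactly as in \eqref{eq:high_order_term}.
\end{itemize}

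\emph{Step 3: absorbing the residual $N^\xi$.} Every term that survives with only an $N^\xi$ bound (rather than $N^{-c}$) has a deterministic leading part that can be extracted. Alternatively, the whole argument can be run as a self-improving estimate: if we have the bound $\E\lesssim N^{\theta}$, Steps 1--2 give $\E\lesssim 1+N^{-c}N^{\theta}$. The term $N^{-c}N^{\theta}$ comes from replacing $N^\xi$ by the inductive bound inside products that still contain full traces, for instance via Hölder inequality with moments at lower degree. Iterating finitely many times then yields $O(1)$.

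I expect the main difficulty to be the $a\neq b$, $\ell=3$ terms, where a sharp resummation is needed to avoid the logarithmic $N^\xi$ loss.
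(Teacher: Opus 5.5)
Your skeleton is the paper's: induct on $|\bm{k}|+|\bm{l}|$ through the static expansion \eqref{eq:E_expansion}, and control the extra fully underlined term by the cumulant expansion plus the resummation of \eqref{eq:before_iso_resum}--\eqref{eq:iso_resum}. However, the power counting in Step~2 is wrong in two places, and the second error hides the step that actually closes the argument.

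\emph{First error: trace-hit terms at $\ell=3$.} For $\ell=3$, $a\neq b$, the prefactor is $N^{-3/2}\cdot N^2=N^{1/2}$. A derivative hitting a trace does \emph{not} give $N^{-1/2+\xi}$. A derivative of $\Tr G^k$ produces one off-diagonal entry $(G^{k+1})_{ba}$. If the other derivative turns $(G^{k_1})_{ba}$ into the diagonal pair $(G^j)_{bb}(G^{k_1+1-j})_{aa}$, the product has exactly one off-diagonal entry. The same happens when both derivatives hit one trace: this gives two diagonal entries and leaves $(G^{k_1})_{ba}$ as the only off-diagonal factor. Inserting \eqref{eq:final_term_trick1} and \eqref{eq:alpha1_bound} gives exactly $N^{-1/2}\cdot N^{2}\cdot N^{-1/2}\cdot N^{-1}\cdot N^{\xi}=N^{\xi}$. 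These bounds are borderline here, since in Section~\ref{sec:cum_exp_last_term} they only yield $N^{-(p-2)+\xi}$. So these terms need the same one-index resummation as the case you flagged. That repair is easy.

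\emph{Second error: all-diagonal terms at $\ell=4$.} Here the gap is genuine. For $\ell=4$, $a\neq b$, the prefactor is $N^{-2}\cdot N^{2}=1$. Each of $a,b$ now appears an even number of times, so there are products with \emph{no} off-diagonal entry at all. For instance, with $k_1=1$ the third derivative of $G_{ba}$ in the directions $ab,ba$ contains $G_{aa}^2G_{bb}^2$. This gives a term like $N^{-2}\sum_{a\neq b}\kappa(ab,\bm{\alpha})\,\E\big[G_{aa}^2G_{bb}^2\,\Delta(\bm{k}^{[1]})\overline{\Delta(\bm{l})}\big]$, and analogous ones arise when derivatives hit traces. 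Your claim of ``an extra gain from off-diagonality'' fails here, and there is nothing to resum. These terms are genuinely of order one: they are the fourth-cumulant correction.

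The only way to bound them by a constant is as follows. Replace the diagonal entries by $m_u$; the fluctuations gain $N^{-1/2}$. The leading part is then a deterministic coefficient times $\E[\Delta(\bm{k}')\overline{\Delta(\bm{l}')}]$ of strictly smaller degree, which is $\lesssim 1$ by the induction hypothesis. So the underlined term cannot be shown to be $O(1)$ by local laws alone; it has to be handled inside the degree induction.

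Your Step~3 does not supply this. The self-improving scheme ``$\E\lesssim 1+N^{-c}N^{\theta}$'' does not apply, because these terms carry no small prefactor. The H\"older remark is not what is needed either. Only the first sentence of Step~3, about a deterministic leading part, points the right way, and it should be made precise as above. With these two corrections your argument is the one the paper sketches.
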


\begin{proof}[Proof of Lemma~\ref{lem:E_global}] Assume without loss of generality that $\bm{k}\neq \emptyset$. The proof of Lemma~\ref{lem:E_global} is analogous to the proof of \eqref{eq:E_1bound} and goes through the static expansion~\eqref{eq:E_expansion}. The only additional step is the estimate on the third and higher order cumulant terms naturally arising in the rhs. of~\eqref{eq:E_expansion} in the case when the entries of $W$ are not Gaussian. Specifically, we need to show that
\begin{equation}
\left\vert\E \underline{\Tr [WG^{k_1}]\Delta\big(\bm{k}^{[1]}\big)\overline{\Delta(\bm{l})}}\right\vert\lesssim 1.
\label{eq:E_global_underline}
\end{equation}
Fix a $D>0$. As in the proof of Lemma~\ref{lem:underline_exp_general}, we apply the cumulant expansion and obtain that the lhs. of~\eqref{eq:E_global_underline} without the absolute value equals to
\begin{equation}
\sum_{\ell=3}^L \sum_{a,b=1}^N\sum_{\bm{\alpha}\in\{ab,ba\}^{\ell-1}}\frac{\kappa(ab,\bm{\alpha})}{(\ell-1)!N^{\ell/2}}\E \partial_{\bm{\alpha}} \left[(G^{k_1})_{ba}\Delta\big(\bm{k}^{[1]}\big)\overline{\Delta(\bm{l})}\right] +\mathcal{O}\left(N^{-D}\right),
\label{eq:E_global_underline_exp}
\end{equation}
for some $L\in\N$ which does not depend on $N$. The derivation of \eqref{eq:E_global_underline} from \eqref{eq:E_global_underline_exp} is analogous to the argument \eqref{eq:before_iso_resum}--\eqref{eq:iso_resum}. We omit further details as they are standard.
\end{proof}

\subsection{Proof of Lemma~\ref{lem:Ito_cumulants}}\label{app:Ito_cumulants} We apply \eqref{eq:cumulant_moment} for $\{X_{j,t}\}_{j=1}^p$ and perform differentiation in $t$. By the It\^{o}'s formula, for any partition $\mathcal{P}$ we get
\begin{small} 
\begin{equation}
\frac{\dif}{\dif t} \prod_{B\in \mathcal{P}} \E \prod_{j\in B} X_{j,t} \!=\!\! \sum_{B_0\in\mathcal{P}}\left(\sum_{i\in B_0}\E\Bigg[ \mu_{j,t}\!\!\!\!\!\prod_{j\in B_0\setminus\{i\}}\!\!\! X_{j,t}\Bigg] +\!\!\!\! \sum_{i,h\in B_0, i<h}\!\!\E\Bigg[ \frac{\dif}{\dif t}[X_i,X_h]_t\!\!\!\!\!\prod_{j\in B_0\setminus\{i,h\}}\!\!\! X_{j,t}\Bigg] \right) \!\!\!\prod_{B\in\mathcal{P}\setminus B_0} \!\!\!\!\E \prod_{j\in B} \!X_{j,t}.
\end{equation}
\label{eq:moment_dif}
\end{small}
Fix $i\in [p]$. Summing up the first term in the rhs. of \eqref{eq:moment_dif} over all partitions (together with the $\mathcal{P}$-dependent factor from \eqref{eq:cumulant_moment}), we obtain the first term in the rhs. of \eqref{eq:Ito_cumulants} with index $i$. Similarly, for any fixed $i,h\in [p]$, $i<h$, summing up over $\mathcal{P}$ the second term in the rhs. of \eqref{eq:moment_dif}, we obtain the second term in the rhs. of \eqref{eq:Ito_cumulants}. This finishes the proof of Lemma~\ref{lem:Ito_cumulants}.\hfill $\qed$

\subsection{Proof of Proposition~\ref{prop:GFT}}\label{app:GFT}

In the current argument we do not distinguish between traces of resolvents and of and their conjugates, so one may assume for simplicity that $\bm{l}=\emptyset$. Denote $n:=n_1=\mathfrak{n}(\bm{k})$. We follow the Lindeberg replacement strategy \cite{TaoVu11} and borrow the notations from \cite[Sec.16.1]{erdHos2017dynamical}. The spectral parameter $z\in\mathcal{D}$ with $\eta:=|\Im z|$ remains fixed throughout the proof and thus is omitted from notations. Denote $\mathcal{I}:=\{(i,j)\in [N^2], i\le j\}$ and fix a bijective ordering map $\varphi: \mathcal{I}\to [N(N+1)/2]$. For $\upsilon\in [N(N+1)/2]$, let $W_\upsilon=W_\upsilon^*$ be the matrix whose entries coincide with $w_{ij}^{(1)}$ for $\phi(i,j)\le \upsilon$ and with $w_{ij}^{(2)}$ otherwise. We additionally set $W_0:=W^{(1)}$. Let $Q_\upsilon$ be the matrix which coincides with $W_\upsilon$ everywhere apart from the entry $\varphi^{-1}(\upsilon)$ and the symmetric one. These two entries (or one, if $\varphi^{-1}(\upsilon)$ is diagonal) are set equal to zero. Denote the resolvents of $W_\upsilon$ and $Q_\upsilon$ by $S_\upsilon$ and $R_\upsilon$, respectively. Finally, for any multi-index $\bm{k}$ we define $\Delta_\upsilon(\bm{k})$ as in \eqref{eq:def_Delta}, but with $G$ replaced by $G_\upsilon$. We stress that the deterministic counterparts \eqref{eq:def_m} do not depend on $\upsilon$.

By telescopic summation, we estimate
\begin{equation}
\left\vert \E \left[\Delta^{(2)}(\bm{k}) -  \Delta^{(1)}(\bm{k})\right]\right\vert \le \sum_{\upsilon=1}^{N(N+1)/2} \left\vert\E\left[ \Delta_\upsilon (\bm{k})-\Delta_{\upsilon-1} (\bm{k})\right]\right\vert.
\end{equation}
For each $\upsilon\in [N(N+1)/2]$, we expand both $S_\upsilon$ and $S_{\upsilon-1}$ around $R_\upsilon$:
\begin{equation}
\begin{split}
S_\upsilon = \sum_{d=0}^{\Gamma+1} \mathcal{T}^{(2)}_{\upsilon,d},\quad \text{where}\quad \mathcal{T}^{(2)}_{\upsilon,d}&:=N^{-d/2}(-1)^d(R_\upsilon X_\upsilon^{(2)})^dR_\upsilon,\,\, d\in [0,\Gamma]\\
 \mathcal{T}^{(2)}_{\upsilon,\Gamma+1} &:= N^{-(\Gamma+1)/2}(-1)^{\Gamma+1}(R_\upsilon X^{(2)}_\upsilon)^{\Gamma+1}S_\upsilon.
\end{split}
\label{eq:S_expansion}
\end{equation}
Here $X^{(2)}=(X^{(2)})^*$ has at most two non-zero entries:
\begin{equation}
(X^{(2)}_\upsilon)_{ij}:=\delta_{\varphi(i,j)=\upsilon} N^{1/2}w^{(2)}_{ij},\qquad 1\le i \le N.
\label{eq:def_X}
\end{equation}
For $S_{\upsilon-1}$ the expansion is the same with the only difference that all superscripts $(2)$ in \eqref{eq:S_expansion}--\eqref{eq:def_X} should be replaced by $(1)$. Next, we apply \eqref{eq:S_expansion} to each trace $\Delta_\upsilon(k_j)$ in $\Delta_\upsilon(\bm{k})$:
\begin{equation}
\Delta_\upsilon(k_j) = \sum_{\bm{d}_j} T_{\upsilon, \bm{d}_j}^{(2)}, \qquad  T_{\upsilon, 0}^{(2)}:=\Tr\left[R^{k_j}_\upsilon-m_{k_j}\right],\quad  T_{\upsilon, \bm{d}_j}^{(2)}:=\Tr \prod_{i=1}^{k_j}\mathcal{T}_{\upsilon,d_{j,i}}^{(2)},\, \bm{d}_j\neq 0. 
\label{eq:Delta_expansion_GFT}
\end{equation}
The summation in \eqref{eq:Delta_expansion_GFT} over runs over all vectors with integer coordinates $\bm{d}_j=(d_{j,i})_{i=1}^{k_j}\in [0,\Gamma+1]^{k_j}$, and $\bm{d}_j=0$ stands for a vector with zero coordinates.

Now we multiply \eqref{eq:Delta_expansion_GFT} for all $j\in [n]$, subtract the analogous formula for $\Delta_{\upsilon-1}(\bm{k})$, and take the expectation. Since $X^{(1)}_\upsilon, X^{(2)}_\upsilon$ are independent of $R_\upsilon$ by construction, the moment matching condition~\eqref{eq:GFT_condition} gives
\begin{equation}
\E \prod_{j=1}^n T_{\upsilon, \bm{d}_j}^{(2)} - \E \prod_{j=1}^n T_{\upsilon, \bm{d}_j}^{(1)} = 0,\qquad\forall\,\bm{d}_j\in [0,\Gamma]^{k_j}\,\,\,\text{with}\,\,\, \sum_{j=1}^n |\bm{d}_j|\le \Gamma. 
\label{eq:T_cancel}
\end{equation}
In the remaining cases we give an upper bound for each of the two terms in the lhs. of \eqref{eq:T_cancel} separately without exploiting a potential cancellation. We focus on the estimates for $T^{(2)}$, while for $T^{(1)}$ the argument is identical. Since $X^{(2)}$ has at most two-non-zero entries, $T_{\upsilon, \bm{d}_j}^{(2)}$ for $\bm{d}_j\neq 0$ is a sum of $N$-independent number of terms, each of those is a product of $(R_\upsilon)_{ab}$, $(R_\upsilon^2)_{ab}$ and $(S_\upsilon R_\upsilon)_{ab}$, for some $a,b\in [N]$ (some of these factors may be absent). Expanding $R_\upsilon$ around $S_\upsilon$ similarly to \eqref{eq:S_expansion} and using the local laws \eqref{eq:kG_av}--\eqref{eq:kG_iso} for $S_\upsilon$, we get
\begin{equation}
\left\vert (R_\upsilon)_{ab}\right\vert\prec 1,\quad \left\vert (R_\upsilon^2)_{ab}\right\vert\prec \eta^{-1},\quad \left\vert (S_\upsilon R_\upsilon)_{ab}\right\vert\prec \eta^{-1},\quad \left\vert \Tr\left[ R_\upsilon^{k}-m_k\right]\right\vert \prec \eta^{-k},
\end{equation}
uniformly in $a,b\in [N]$ and for any fixed $k\in\N$. For more details see the discussion above (16.8) in \cite{erdHos2017dynamical}. Therefore,
\begin{equation}
\left\vert T_{\upsilon, \bm{d}_j}^{(2)}\right\vert \prec N^{-|\bm{d}_j|/2} \eta^{-k_j}.
\end{equation}
Using this bound for all $\{\bm{d}_j\}_{j=1}^n$ which do not satisfy conditions of \eqref{eq:T_cancel}, we finish the proof of Proposition~\ref{prop:GFT}.\hfill\qed

\subsection{Proof of~\eqref{eq:kappa22_real}}\label{app:kappa22_real} First, we prove \eqref{eq:kappa22_real} for a GOE matrix $W$. Recall the notations introduced in Section~\ref{sec:E_expansion} and observe that~\eqref{eq:Phi_vanish_trivial} holds also in the real case. Therefore, \eqref{eq:E_full_expansion} implies $\E\eta^{|\bm{k}|}\Delta(\bm{k})=\mathcal{O}(\eta)$ for any fixed multi-index $\bm{k}\neq\emptyset$. Together with the cumulant-moment relation \eqref{eq:cumulant_moment} applied to $\kappa_{2,2}(\eta\Delta(1))$ this gives
\begin{equation}
\kappa_{2,2}(\eta\Delta(1)) = \E |\eta\Delta(1)|^4 - 2\left(\E |\eta\Delta(1)|^2\right)^2 +\mathcal{O}(\eta) = \E\big((1,1),(1,1)\big) -2\left(\E(1,1)\right)^2 +\mathcal{O}(\eta).
\label{eq:kappa22_real_basic_id}
\end{equation}
Using \eqref{eq:E_full_expansion}, we expand the rhs. of \eqref{eq:kappa22_real_basic_id} in powers of $(N\eta)^{-1}$ up to the third order. By \eqref{eq:kappapq_new} it holds that
\begin{equation}
\left\vert\kappa_{2,2}(\eta\Delta(1))\right\vert \lesssim \frac{N^\xi}{N\eta}\left(\frac{1}{N^{1/2}}+\frac{1}{N\eta}\right).
\end{equation}
Therefore, the zeroth and first order terms in this expansion vanish. To establish \eqref{eq:kappa22_real} we thus need to show that the second order vanishes as well, and compute the third order term. An explicit calculation based on \eqref{eq:Phi_recursion} gives\footnote{For the mechanical proof of \eqref{eq:E_explicit} we used a simple Python program whose code is provided in the supplementary material.}
\begin{equation}
\begin{split}
\E(1,1)=&\frac{1}{2} + \frac{\ii\ma_0}{2}\frac{1}{N\eta} -\frac{3\ma^2_0}{4}\frac{1}{(N\eta)^2}-2\ii \ma^3_0\frac{1}{(N\eta)^3}+\mathcal{O}\left(\frac{1}{(N\eta)^4}+\eta\right),\\
\E\big((1,1),(1,1)\big)=&\frac{1}{2} + \ii\ma_0 \frac{1}{N\eta} -2\ma^2_0 \frac{1}{(N\eta)^2} -8\ii \ma^3_0 \frac{1}{(N\eta)^3} +\mathcal{O}\left(\frac{1}{(N\eta)^4}+\eta\right),
\end{split}
\label{eq:E_explicit}
\end{equation} 
 where $\ma_0$ is defined in \eqref{eq:def_a}. Combining \eqref{eq:kappa22_real_basic_id}--\eqref{eq:E_explicit}, we finish the proof of \eqref{eq:kappa22_real} for GOE.  We transfer the GOE result to the case where $W$ has a Gaussian component of order one by the means of Proposition~\ref{prop:E_local}(i). Finally, the Gaussian component is removed via the GFT based on Lemma~\ref{lem:moment_match} and Proposition~\ref{prop:GFT} as in the proof of \eqref{eq:kappapq_new}. This finishes the proof of~\eqref{eq:kappa22_real}.\hfill\qed

\bibliographystyle{plain} 
\bibliography{refs}

\clearpage

\section*{Supplementary Material}

\setcounter{section}{0}
\renewcommand{\thesection}{S\arabic{section}}
\renewcommand{\sectionname}{Section}

In this supplement we present the Python code which we used to compute \eqref{eq:E_explicit}. In general, it allows to compute $\Phi_d(\bm{k},\bm{l})$ introduced in Lemma~\ref{lem:E_full_expansion} in terms of $\ma$ for any $d\in\N\cup\{0\}$ and multi-indices $\bm{k},\bm{l}$.

\begin{lstlisting}
import math
from sympy import symbols
from sympy import simplify

b=1  # b=1 for GOE and b=2 for GUE
a=symbols('a')  # (*@$\ma_0$@*), see (*@\eqref{eq:def_a}@*)

# The following function computes (*@$Exp(d,K,L):= \Phi_d(\bm{k},\bm{l})$@*). 
# Here we encode (*@$\bm{k}, \bm{l}$@*) by one-dimensional arrays.

def Exp(d,K,L):
    n1=len(K)
    n2=len(L)
    
    # By definition, (*@$\Phi_d=0$@*) for (*@$d<0$@*).
    if d<0: return(0)
    #Next, we consider the trivial case when (*@$\bm{k}=\bm{l}=\emptyset$@*).
    elif n1==0 and n2==0: 
        if d==0: return(1)
        else: return(0)
    # If exactly one of the multi-indices (*@$\bm{k},\bm{l}$@*) is empty, then 
    # (*@$\Phi_d$@*) vanishes for all (*@$d$@*) similarly to (*@\eqref{eq:Phi_vanish_trivial}@*).
    elif n1*n2==0: return(0)    
    # In the remaining cases (*@$\bm{k},\bm{l}\neq\emptyset$@*) and we use the recurence relation (*@\eqref{eq:Phi_recursion}@*)
    else:
        k=K[0]  # (*@$k_1$@*)
        K_cut=K[1:]  # (*@$\bm{k}^{[1]}$@*)
        
        #1st term in the first line of (*@\eqref{eq:Phi_recursion}@*)
        T1=0
        for j in range(n2):
            l=L[j]  # (*@$l_j$@*)
            coeff=(2/b)*(1j)**(k-l-1) * 2**(-k-l+1) * l * math.comb(k+l-1,k-1)
            L_cut=L[:j]+L[j+1:] # (*@$\bm{l}^{[j]}$@*)
            #In the following line we use the identity (*@\eqref{eq:a_id}@*).
            T1=T1+(1j/2)*coeff*Exp(d,K_cut,L_cut)
        
        #2nd term in the first line of (*@\eqref{eq:Phi_recursion}@*)
        T2=0
        if b==1:
            Knew=[k+1]+K_cut  # (*@$\big(k_1+1,\bm{k}^{[1]}\big)$@*)
            T2=a*k*Exp(d-1,Knew,L)
                
        #1st term in the second line of (*@\eqref{eq:Phi_recursion}@*)
        T3=0
        for u in range(k):
            u1=u+1
            K_new = [u1,k+1-u1] + K_cut  # (*@$\big(u,k_1+1-u,\bm{k}^{[1]}\big)$@*)
            T3=T3 + a*Exp(d-1,K_new,L)
        
        #2nd term in the second line of (*@\eqref{eq:Phi_recursion}@*)
        T4=0
        if n1>0:
            for i in range(n1-1):
                i1=i+1
                ki=K[i1]  # (*@$k_i$@*)
                K_new = [k+ki+1]+K[1:i1]+K[i1+1:]  # (*@$\big(k_1+k_i+1,\bm{k}^{[1,i]}\big)$@*)
                T4=T4 + (2/b)*a*ki*Exp(d-1,K_new,L)

        #Third line of (*@\eqref{eq:Phi_recursion}@*)
        T5=0
        for j in range(n2):
            l=L[j]  # (*@$l_j$@*)
            L_cut=L[:j]+L[j+1:]  # (*@$\bm{l}^{[j]}$@*)
            for u in range(k):
                u1=u+1
                coeff=(-1)**(k-u1)*(2*1j)**(-l-k-1+u1)*l*math.comb(l+k-u1,l)
                K_new = [u1] + K_cut  # (*@$\big(u,\bm{k}^{[1]}\big)$@*)
                T5=T5 + (2/b)*a*coeff*Exp(d-1,K_new,L_cut)
                
        #Fourth line of (*@\eqref{eq:Phi_recursion}@*)
        T6=0
        for j in range(n2):
            l=L[j]  # (*@$l_j$@*)
            L_cut=L[:j]+L[j+1:]  # (*@$\bm{l}^{[j]}$@*)
            for v in range(l+1):
                v1=v+1
                coeff=(-1)**k*(2*1j)**(-k-l-1+v1)*l*math.comb(l+k-v1, k-1)
                L_new = [v1] + L_cut  # (*@$\big(v,\bm{l}^{[j]}\big)$@*)
                T6=T6 + (2/b)*a*coeff*Exp(d-1,K_cut,L_new)
        
        return(T1+T2+T3+T4+T5+T6)

for d in range(4):  # (*@$d\in \{0,1,2,3\}$@*)
    print(simplify(Exp(d,[1],[1])))  # (*@$\Phi_d(1,1)$@*)
    print(simplify(Exp(d,[1,1],[1,1])))  # (*@$\Phi_d\big((1,1),(1,1)\big)$@*)

\end{lstlisting}

\end{document}